\tikzset{
dotstyle/.style={
  inner sep=0pt,
  text width=6mm,
  align=center,
  }
  }
\newtheorem{theorem}{Theorem}[section]
\newtheorem{corollary}[theorem]{Corollary}
\newtheorem{proposition}[theorem]{Proposition}
\newtheorem{remark}[theorem]{Remark}
\newtheorem{definition}[theorem]{Definition}
\newtheorem{example}[theorem]{Example}
\newtheorem{lemma}[theorem]{Lemma}
\newtheorem{hypothesis}{Hypothesis}
\newtheorem*{remark*}{Remark}
\newmdtheoremenv{boxedhypothesis}[hypothesis]{Hypothesis}
\def\RR{{\mathbb{R}}}
\def\NN{{\mathbb{N}}}
\def\GR{\mathcal{GR}}
\def\G{\mathcal{G}}
\def\F{\mathcal{F}}
\def\B{\mathcal{BP}}
\def\X{\mathcal{X}}
\def\D{\mathcal{D}}
\DeclareMathAlphabet{\mathbbold}{U}{bbold}{m}{n}
\def\3{\mathbbold{3}}
\def\r{\mathbf{r}}
\def\c{\mathbf{c}}
\newcommand{\semi}{{\sqsubset}}
\newcommand{\no}{{\not\sqsubset}}
\begin{document}

	\title{Comparing the numbers of subforests and subgraph-degree-tuples}

    \author{Sergei Shteiner\thanks{Independent researcher, Berlin, Germany, \href{mailto:sergei.shteiner@gmail.com}{\texttt{sergei.shteiner@gmail.com}}}
    \and Pavel Shteyner\thanks{Department of Mathematics, Bar-Ilan University, Ramat-Gan, 5290002, Israel, \href{mailto:pavel.shteyner@biu.ac.il}{\texttt{pavel.shteyner@biu.ac.il}}}}
	\maketitle
	
	\medskip

\begin{abstract}
We enumerate the row-column-sums of all square tridiagonal \((0,1)\)-matrices and prove that their count coincides with 
OEIS~A022026 -- %
the number of acyclic subgraphs of the complete $2\times n$ grid graph.
We then extend this correspondence in two independent directions:
\begin{enumerate}
  \item admitting larger sets of matrix entries, and
  \item relaxing the tridiagonal support to broader prescribed sparsity patterns.
\end{enumerate}
The latter leads us to conjecture that, for any bipartite graph~$G$, the number
of its acyclic subgraphs equals the number of degree sequences realized by
subgraphs of~$G$. Moreover, for any non-bipartite graph, the former should be strictly smaller than the latter. We discuss several general approaches and prove these hypotheses for cactus graphs and generalized book graphs.
\end{abstract}

	\bigskip
	\noindent {\bf Key words.} Bipartite graphs, acyclic subgraphs, vertex-degrees of subgraphs, tridiagonal matrices,  $(0,1)$-matrices.
	
	\medskip\noindent
	MSC2020: 05B20 \quad  05A19  \quad 05C30   \quad 05C07   \quad 05C15   

\section{Introduction}\label{sec:intro}

The classical Gale-Ryser theorem characterizes the integer vectors that can arise as the row- and column-sums of an \(m\times n\) binary matrix; see~\cite{Brualdi_Ryser_1991}.
In this paper, we focus on the \emph{tridiagonal} case: square \((0,1)\)-matrices whose nonzero entries lie only on the main and the two adjacent diagonals.

As it turns out, the number of distinct row-column-sums of such matrices coincides with OEIS \cite{OEIS} sequence \href{https://oeis.org/A022026}{A022026}.  This sequence also counts the acyclic subgraphs of the complete \(2\times n\) grid graph \( \mathrm{Grid}^2_n \) (Definition~\ref{def:complete_grid_graph}).   Despite the absence of a clear bijection between these objects, we show that this is not a mere coincidence.

On the one hand, we show that introducing colors into $Grid^2_n$ may correspond to expanding the matrix entry set. On the other hand, viewing binary matrices as biadjacency matrices of bipartite graphs translates the aforementioned relation into a purely graph-theoretic one: the number of acyclic subgraphs of $Grid^2_n$ equals the number of distinct vertex-degree tuples realized by its subgraphs.

\textbf{We conjecture that this equality is satisfied if and only if the graph is bipartite.} Moreover, for any non-bipartite graph, the number of acyclic subgraphs should be strictly smaller than the number of vertex-degree tuples. In addition to $Grid^2_n$, we prove these conjectures for several illustrative classes of graphs: complete bipartite graphs (based on the proof of Speyer~\cite{Speyer}), cactus graphs, and generalized book graphs.

The paper is organized as follows. Section~\ref{sec:notation} introduces the necessary notation. In Section~\ref{sec:3n}, we find the number of distinct row-column-sums of binary tridiagonal
matrices and establish the connection to $Grid^2_n$. Section~\ref{sec:k:colors} counts the number of $k$-colored acyclic subgraphs of $Grid^2_n$.
Section~\ref{sec:last} interprets the results in graph language and discusses the main conjectures, Hypotheses \ref{main_hypo} and \ref{<hypo}. In Section \ref{subseq:factorizations}, we propose general reduction techniques. In Sections \ref{subsection:cacti} and \ref{subsection:books}, we prove the conjectures for cactus graphs and generalized book graphs, respectively. Finally, the Appendix contains more technical proofs, including those for the non-bipartite case.

\section{Notation}\label{sec:notation}

We use the following notation throughout the paper:

\begin{itemize}
    \item $M_n$ denotes the set of all real $n \times n$ matrices, while $M_n(0,1)$ denotes the set of all $n \times n$ matrices with entries in $\{0,1\}$.
    
    \item For a matrix $A \in M_n$ we denote its top-left submatrix of order $n-1$ by $A'$.

    \item $e$ denotes the column vector of all ones.

    \item For a matrix $A$, we denote by $\r^A$ the vector of its row sums and by $\c^A$ the vector of its column sums.
    
    \item For a subset $\X\subseteq M_n$, we investigate the set of possible row-column-sums of such matrices. We define:
    $$\GR(\X) = \{(\r^A, \c^A) : A \in \X\}.$$
    This represents the set of all realizable row-column-sum pairs for matrices in $\X$. The notation $\GR$ stands for Gale and Ryser.
    
    \item For a given set of matrices $\X$, we use $\X(\r, \c)$ to denote the subset of all matrices in $\X$ with row sum $\r$ and column sum $\c$.

    \end{itemize}

    We will focus on square matrices with nonzero entries only on specific diagonals:

\begin{itemize}
    
    \item For a set $X\subseteq \RR$ we consider the set of tridiagonal matrices of order $n$ with entries from $X$:
    $$\3_n(X) = \{A \in M_n(X) : a_{ij} \neq 0 \text{ implies } |i-j| \leq 1\}.$$
    For the sake of brevity, we will simply write $\3_n$ instead of $\3_n(\{0, 1\})$ when dealing with $(0,1)$-matrices.

\end{itemize}

All graphs are assumed undirected. We use the following graph notation and terminology:

\begin{itemize}

    \item For a graph $G$ we denote by $V(G)$ the set of its vertices. The set of edges of $G$ is denoted by $E(G)$.

    \item The term {\em subgraph} refers to a not necessarily connected spanning subgraph. That is, a graph $H$  is a {\em subgraph} of a graph $G$ (denoted $H \subseteq G$) if $$\begin{cases}
        V(H) = V(G); \\
        E(H) \subseteq E(G).
    \end{cases}$$
    
    \item Given two graphs $G_1, G_2$, their {\em union} $G_1 \cup G_2$ is the graph satisfying $$\begin{cases}
        V(G_1 \cup G_2) = V(G_1) \cup V(G_2);\\
        E(G_1 \cup G_2) = E(G_1) \cup E(G_2).
    \end{cases}$$ In this paper, we only deal with the unions of edge-disjoint graphs.

    \item Given two graphs $G_1, G_2$, their {\em intersection} $G_1 \cap G_2$ is the graph satisfying \[\begin{cases}
        V(G_1 \cap G_2) = V(G_1) \cap V(G_2);\\
        E(G_1 \cap G_2) = E(G_1) \cap E(G_2).
    \end{cases}\]
   \item A {\em circuit} is a non-empty trail in which the first and last vertices are equal. A {\em cycle} is a circuit with no repeated vertices except the first and the last.

    \end{itemize}

In this paper, we investigate acyclic subgraphs and vertex-degree tuples of subgraphs. We introduce the following notation:

    \begin{itemize}
    \item For a graph $G$ with $n$ vertices, $d(G)$ denotes the ordered $n$-tuple of its vertex degrees. Also $d_G(v)$ denotes the degree of $v \in V(G)$. When there is no danger of confusion, we simply write $d(v)$.
    
    \item Let $\D(G) = \{d(H) : H \subseteq G \}$. In other words, $\D(G)$ is the set of degree sequences of all subgraphs of $G$.

    \item $\F(G)$ is the set of all acyclic,  not necessarily connected subgraphs (i.e. \textit{subforests}) of $G$.

    \item We call $G$ an $\mathsf{FED}$-graph (stands for `F equals D'), if $|\F(G)| = |\D(G)|$. We call $G$ an $\mathsf{FLD}$-graph if $|\F(G)| < |\D(G)|$.
\end{itemize}
    
\section{Row-column-sums of tridiagonal matrices}\label{sec:3n}

        In this section, we compute the cardinality of the set $\GR(\3_n)$, i.e. the number of realizable row-column-sums of binary tridiagonal matrices. We prove that $(|\GR(\3_n)|)_{n \in \NN}$ is the number sequence \href{https://oeis.org/A022026}{OEIS~A022026}. Namely, 
        \begin{equation}\label{formula:GR}\begin{cases}
            |\GR(\3_1)| = 2;\\
            |\GR(\3_2)| = 15;\\
            |\GR(\3_n)| = 8|\GR(\3_{n-1})| - 4|\GR(\3_{n-2})| \text{ for } n > 2.
        \end{cases}\end{equation}
	Remark \ref{rem:n=1} and Lemma \ref{lem:n=2} establish the initial terms.
	
	\begin{remark}\label{rem:n=1} There are exactly two matrices in $\3_1$. Therefore, $\GR(\3_1) = \{((0), (0)), ((1), (1))$\}. In particular, $|\GR(\3_1)| = 2$.
	\end{remark}
	
	\begin{lemma}\label{lem:n=2}
	    $|\GR(\3_2)| = 15$.
	\end{lemma}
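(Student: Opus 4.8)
The plan is to exploit the fact that for $n=2$ the tridiagonal constraint is vacuous: every off-diagonal entry of a $2\times 2$ matrix already satisfies $|i-j| = 1$, so $\3_2 = M_2(0,1)$, a set of exactly $2^4 = 16$ matrices. Thus the statement reduces to counting the image of the finite map $A \mapsto (\r^A, \c^A)$ on these sixteen matrices, and $|\GR(\3_2)|$ is just the number of distinct values this map attains.

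First I would parametrize a matrix by its entries $A = \left(\begin{smallmatrix} a & b \\ c & d\end{smallmatrix}\right)$ with $a,b,c,d \in \{0,1\}$, so that $\r^A = (a+b,\, c+d)$ and $\c^A = (a+c,\, b+d)$. Since $A$ is recovered from its four entries while the pair $(\r^A,\c^A)$ records only these four partial sums, two distinct matrices are identified by the map precisely when they share all row sums and all column sums. Counting the image therefore amounts to identifying the fibers of this map and subtracting the redundancies.

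Next I would determine those fibers. The common total $a+b+c+d = r_1+r_2 = c_1+c_2$ together with the individual margins pins down the matrix uniquely in every case except the single balanced one $\r^A = \c^A = (1,1)$: solving $a+b=c+d=a+c=b+d=1$ forces $b=c$ and yields exactly the two permutation matrices $\left(\begin{smallmatrix}1&0\\0&1\end{smallmatrix}\right)$ and $\left(\begin{smallmatrix}0&1\\1&0\end{smallmatrix}\right)$. A short case check, most cleanly displayed as a table listing all sixteen matrices against their row-column-sum pairs, confirms that this is the only coincidence.

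Finally, since exactly one fiber has size two and the remaining fourteen matrices produce pairwise distinct pairs, the image has $16 - 1 = 15$ elements, giving $|\GR(\3_2)| = 15$. There is no genuine obstacle beyond the bookkeeping of a finite enumeration; the only point requiring care is verifying that the two permutation matrices form the sole pair of distinct $(0,1)$-matrices with identical margins, which is the familiar fact that a $2\times 2$ binary matrix is determined by its margins outside the doubly-balanced case.
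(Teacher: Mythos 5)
Your proposal is correct and takes essentially the same route as the paper: both count the $16$ matrices in $\3_2$ and show that the two permutation matrices $\left(\begin{smallmatrix}1&0\\0&1\end{smallmatrix}\right)$, $\left(\begin{smallmatrix}0&1\\1&0\end{smallmatrix}\right)$ form the unique colliding pair, giving $16-1=15$. The paper identifies the collision via the difference matrix $D=A-B$ with zero margins, while you solve the margin equations directly, but this is only a cosmetic difference.
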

{\begin{proof}
    There are $16$ matrices in $\3_2$. Assume that distinct $A, B \in \3_2$ have the same row and column sums. Then $D = A - B$ is a $2\times 2$ $(0,\pm1)$-matrix with zero row and column sums. Every entry of $D$ is nonzero, since otherwise $\r^D = \c^D=0$ forces $D=0$, which contradicts $A \neq B$. Thus, up to a sign, $D = \left[\begin{matrix}
        1 & -1 \\ -1 & 1
    \end{matrix}\right]$. Then $\{A, B\} = \
    \left\{\left[\begin{matrix} 1 & 0 \\ 0 & 1 \end{matrix}\right], \left[\begin{matrix} 0 & 1 \\ 1 & 0 \end{matrix}\right]\right\}.$

    Finally, exactly two matrices in $\3_2$ have the same row and column sums and thus $|\GR(\3_2)| = 15$.
\end{proof}}

    Recall that for a matrix $A \in M_n$ we denote its top-left submatrix of order $n-1$ by $A'$.
    \begin{definition}\label{def:extends}\rm
        We say that a row-column-sum tuple $(\r, \c) \in \GR(\3_{n})$ {\em extends} a row-column-sum tuple $(\r', \c') \in \GR(\3_{n - 1})$, if there exists $A \in \3_{n}(\r, \c)$ such that $A'~\in~\3_{n-1}(\r', \c')$. 

        We call such $A$ an {\em extending matrix}. Note that \begin{equation}\label{eq:extension}
        \begin{cases}
    		\r_i = \r'_i \text{ for } i \leq n - 2; \\
    		\r_{n - 1} = \r'_{n-1} + a_{n-1, n};\\
    		\r_{n} = a_{n, n-1} + a_{n, n}.
    	\end{cases} \qquad\qquad \begin{cases}
    	\c_i = \c'_i \text{ for } i \leq n - 2; \\
    	\c_{n - 1} = \c'_{n - 1} + a_{n, n - 1};\\
    	\c_{n} = a_{n-1, n} + a_{n, n}.
    \end{cases}\end{equation}

        % $\begin{cases}
        %     (\r', \c') = (\r^{A'}, \c^{A'});\\
        %     (\r, \c) = (\r^A, \c^A).
        % \end{cases}$
    \end{definition}

    Clearly, for $n \geq 2$, every $(\r, \c)\in \GR(\3_n)$ extends at least one row-column-sum tuple from $\GR(\3_{n-1})$. The following lemma shows, when this number is exactly one.
    \begin{lemma}\label{lem:extends_1}
        A row-column-sum tuple $(\r, \c) \in \GR(\3_n)$ with $(\r_n, \c_n) \neq (1, 1)$ extends exactly one row-column-sum tuple from $\GR(\3_{n-1})$.
    \end{lemma}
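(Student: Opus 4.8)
The plan is to translate the counting of extended tuples into a counting of admissible ``corner'' entries, and then to show that outside the case $(\r_n,\c_n)=(1,1)$ those entries are forced. First I would record the reduction implicit in \eqref{eq:extension}: if $(\r,\c)\in\GR(\3_n)$ extends $(\r',\c')\in\GR(\3_{n-1})$ via some $A\in\3_n(\r,\c)$, then among the components of $(\r',\c')$ only $\r'_{n-1}=\r_{n-1}-a_{n-1,n}$ and $\c'_{n-1}=\c_{n-1}-a_{n,n-1}$ are not already dictated by $(\r,\c)$. Consequently, the number of tuples that $(\r,\c)$ extends equals the number of distinct pairs $(a_{n-1,n},a_{n,n-1})$ that occur as we range over all $A\in\3_n(\r,\c)$. (Realizability of the resulting $(\r',\c')$ in $\GR(\3_{n-1})$ is automatic, since $A'$ is itself a tridiagonal $(0,1)$-matrix witnessing it.)

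The crux is a decoupling observation. By tridiagonality, the only possibly nonzero entries in the last row of $A$ are $a_{n,n-1}$ and $a_{n,n}$, and in the last column only $a_{n-1,n}$ and $a_{n,n}$. Hence the two quantities
\[
\r_n=a_{n,n-1}+a_{n,n},\qquad \c_n=a_{n-1,n}+a_{n,n}
\]
are determined entirely by the three corner entries $a_{n-1,n},a_{n,n-1},a_{n,n}$, independently of everything else in $A$. Therefore the admissible corner triples are exactly the $\{0,1\}$-solutions of this pair of equations for the prescribed value of $(\r_n,\c_n)$, and counting extended tuples reduces to solving these two equations.

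The finite check is then immediate. If $\r_n\neq 1$ then $a_{n,n}$ is forced (it must be $0$ when $\r_n=0$ and $1$ when $\r_n=2$), and symmetrically $\c_n\neq 1$ forces $a_{n,n}$. Since we assume $(\r_n,\c_n)\neq(1,1)$, at least one of $\r_n,\c_n$ differs from $1$, so $a_{n,n}$ is uniquely determined; the remaining entries $a_{n,n-1}=\r_n-a_{n,n}$ and $a_{n-1,n}=\c_n-a_{n,n}$ are then forced as well. Thus the pair $(a_{n-1,n},a_{n,n-1})$ is the same for every $A\in\3_n(\r,\c)$, so $(\r',\c')$ is unique, and since $(\r,\c)\in\GR(\3_n)$ guarantees at least one such $A$, the tuple $(\r,\c)$ extends exactly one element of $\GR(\3_{n-1})$.

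I expect the only real subtlety to be the decoupling step: one must resist the worry that two different global matrices realizing $(\r,\c)$ might carry different corner entries. The point is that the tridiagonal support leaves no alternative location for the mass of the last row or column, so $(\r_n,\c_n)$ alone pins down the corner. Once this is isolated, the remainder is a bounded case analysis, and it simultaneously explains the exclusion: for $(\r_n,\c_n)=(1,1)$ the two equations admit the two solutions with $a_{n,n}=0$ and $a_{n,n}=1$, giving corner pairs $(a_{n-1,n},a_{n,n-1})=(1,1)$ and $(0,0)$, hence two distinct extended tuples.
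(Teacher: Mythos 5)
Your proof is correct and follows essentially the same route as the paper: both arguments observe that $(\r_n,\c_n)\neq(1,1)$ forces the three corner entries $a_{n-1,n},a_{n,n-1},a_{n,n}$ of any matrix in $\3_n(\r,\c)$, and then conclude uniqueness of $(\r',\c')$ from \eqref{eq:extension}. The only cosmetic difference is that you isolate $a_{n,n}$ first to unify the two cases, whereas the paper argues by a without-loss-of-generality case split on $\r_n\in\{0,2\}$.
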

    \begin{proof}
        Assume that $(\r, \c)$ extends $(\r', \c') \in \GR(\3_{n-1})$ and let $A$ be an extending matrix.
        Without loss of generality assume that $\r_n \neq 1$. There are two possibilities:
        
        \textbf{Case 1.} If $\r_n = 0$, then $a_{n, n - 1} = a_{n, n} = 0$. In this case $a_{n-1, n} = \c_n$.

        \textbf{Case 2.} If $\r_n = 2$, then $a_{n, n - 1} = a_{n, n} = 1$. In this case $a_{n-1, n} = \c_n - 1$.

        Therefore, in both cases we can uniquely determine $a_{n-1, n}, a_{n, n-1}, a_{n, n}$. Then by \eqref{eq:extension} $(\r', \c')$ is also unique.
    \end{proof}

    \begin{remark}\label{rem:extends_2}
        Any matrix $A \in \3_n(\r, \c)$ with $(\r_n, \c_n) = (1, 1)$ is either of the form $\left[\begin{smallmatrix}
	   &  & \vdots\\
	   &  &      1 \\
	\cdots & 1 & 0 
\end{smallmatrix}\right] \text{ or }\left[\begin{smallmatrix}
	   &  & \vdots\\
	   &  &      0 \\
	\cdots & 0 & 1 
\end{smallmatrix}\right]$.
    \end{remark}

    \begin{theorem}\label{thm:3:matrices}
        Let $n > 2$. Then $|\GR(\3_n)| = 8|\GR(\3_{n-1})| - 4|\GR(\3_{n-2})|$.
    \end{theorem}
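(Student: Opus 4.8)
The plan is to count $|\GR(\3_n)|$ by sorting the matrices of $\3_n$ by their top-left submatrix of order $n-1$, i.e.\ through the extension relation of Definition~\ref{def:extends}. I would count pairs consisting of a tuple $(\r',\c')\in\GR(\3_{n-1})$ together with a choice of the three new band entries $a_{n-1,n},a_{n,n-1},a_{n,n}\in\{0,1\}$; there are $8\,|\GR(\3_{n-1})|$ of them. By~\eqref{eq:extension} each pair determines a unique tuple of $\GR(\3_n)$, and conversely, once a target $(\r,\c)$ and a source it extends are fixed, the three entries are recovered from the increments to $\r_{n-1},\c_{n-1}$ and from $(\r_n,\c_n)$. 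Since every tuple of $\GR(\3_n)$ extends at least one source, the resulting map onto $\GR(\3_n)$ has, over each target, a fibre equal to the number of sources that target extends. By Lemma~\ref{lem:extends_1} a target with $(\r_n,\c_n)\neq(1,1)$ extends exactly one source (fibre $1$), whereas by Remark~\ref{rem:extends_2} a target with $(\r_n,\c_n)=(1,1)$ is realized only through $(a_{n-1,n},a_{n,n-1},a_{n,n})=(0,0,1)$ or $(1,1,0)$, and so has fibre $1$ or $2$. Writing $Q$ for the number of $(1,1)$-type targets with fibre $2$, the double count gives
\[
8\,|\GR(\3_{n-1})| \;=\; |\GR(\3_n)| + Q ,
\]
and it remains to show $Q = 4\,|\GR(\3_{n-2})|$.

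I would then recognize $Q$ as a smaller instance of the same type of quantity. A $(1,1)$-type target has fibre $2$ precisely when both of its sources lie in $\GR(\3_{n-1})$: the source from $(0,0,1)$ is $(\mathbf{s},\mathbf{t})$ with $\mathbf{s}=(\r_1,\dots,\r_{n-1})$, $\mathbf{t}=(\c_1,\dots,\c_{n-1})$, while the source from $(1,1,0)$ is obtained from $(\mathbf{s},\mathbf{t})$ by lowering only the last coordinate of each of $\mathbf{s}$ and $\mathbf{t}$ by one (these two sources are always distinct, as they differ in coordinate $n-1$). Hence $Q=P_{n-1}$, where $P_m$ denotes the number of $(\mathbf{s},\mathbf{t})\in\GR(\3_m)$ for which decreasing the last entry of each of $\mathbf{s}$ and $\mathbf{t}$ by one again yields a tuple of $\GR(\3_m)$. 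The heart of the proof is then the identity $P_m = 4\,|\GR(\3_{m-1})|$, to be applied with $m=n-1$.

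For $P_m$ I would argue by cases on $(\mathbf{s}_m,\mathbf{t}_m)$. The decremented tuple is nonnegative only when $\mathbf{s}_m,\mathbf{t}_m\ge 1$, i.e.\ $(\mathbf{s}_m,\mathbf{t}_m)\in\{(1,1),(1,2),(2,1),(2,2)\}$, so the tuples counted by $P_m$ split into four classes. In each class, the two conditions ``$(\mathbf{s},\mathbf{t})\in\GR(\3_m)$'' and ``its decrement lies in $\GR(\3_m)$'' are translated, via the order-$m$ extension framework, into conditions on the single order-$(m-1)$ tuple obtained by deleting the last coordinate (with $\mathbf{s}_{m-1}$ and/or $\mathbf{t}_{m-1}$ possibly lowered by one, as dictated by the band entries forced by that value of $(\mathbf{s}_m,\mathbf{t}_m)$). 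The key computation is that in every class these two conditions coincide and amount to the membership of one fixed order-$(m-1)$ tuple in $\GR(\3_{m-1})$; this produces a bijection between the class and $\GR(\3_{m-1})$. Summing the four classes yields $P_m = 4\,|\GR(\3_{m-1})|$, and substituting gives $|\GR(\3_n)| = 8\,|\GR(\3_{n-1})| - 4\,|\GR(\3_{n-2})|$.

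The step I expect to be most delicate is the collapse of the two realizability conditions in the classes $(\mathbf{s}_m,\mathbf{t}_m)=(2,2)$ and $(1,1)$, where the order-$(m-1)$ tuple at issue (or the decremented order-$m$ tuple) is itself of the ambiguous $(1,1)$-type: there Lemma~\ref{lem:extends_1} does not apply, and I would instead invoke Remark~\ref{rem:extends_2} to check directly that realizability of $(\mathbf{s},\mathbf{t})$ already forces realizability of its decrement, so that the two conditions truly reduce to one. Throughout I would use $n>2$ to ensure $m-1=n-2\ge 1$, so that the order-$(m-1)$ objects and the extension lemmas remain meaningful; the base values $|\GR(\3_1)|=2$ and $|\GR(\3_2)|=15$ are already furnished by Remark~\ref{rem:n=1} and Lemma~\ref{lem:n=2}.
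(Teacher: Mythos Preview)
Your proposal is correct and follows essentially the same approach as the paper. Both arguments overcount via the $8\,|\GR(\3_{n-1})|$ extensions, observe (using Lemma~\ref{lem:extends_1} and Remark~\ref{rem:extends_2}) that the excess comes only from fibre-$2$ targets with $(\r_n,\c_n)=(1,1)$, and then split those into the same four cases according to the last coordinates of the two candidate sources, each case bijecting to $\GR(\3_{n-2})$; your intermediate quantity $P_m$ and the ``collapse of two conditions'' is just a repackaging of the paper's case analysis (i)--(iv), and your flagged delicate cases $(\mathbf{s}_m,\mathbf{t}_m)\in\{(1,1),(2,2)\}$ correspond exactly to the paper's need to switch between $(\r^1,\c^1)$ and $(\r^2,\c^2)$ in cases~(i) and~(iv).
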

    \begin{proof}
    We use the inclusion-exclusion approach. 
    
    \textbf{I.} There are $2^3$ ways to extend any $(\r', \c') \in \GR(\3_{n-1})$ to $(\r, \c) \in \GR(\3_{n})$. 
    
    \textbf{II.} Assume that $(\r, \c) \in \GR(\3_{n})$ extends two distinct row-column-sum tuples $(\r^{1}, \c^{1}), (\r^{2}, \c^{2}) \in \GR(\3_{n-1})$. It follows that $\r_n = \c_n = 1$ by Lemma \ref{lem:extends_1}.

    Let $A$, $B$ be matrices that extend $(\r^{1}, \c^{1})$ and $(\r^{2}, \c^{2})$, respectively, to $(\r, \c)$. Then $(a_{n, n-1}, a_{n, n}, a_{n-1, n}) \neq (b_{n, n-1}, b_{n, n}, b_{n-1, n})$ by \eqref{eq:extension}. Hence, due to Remark \ref{rem:extends_2}, we may assume without loss of generality that
    \begin{equation}\label{eq:pair}A = \left[\begin{smallmatrix}
	   &  & \vdots\\
	   &  &      1 \\
	\cdots & 1 & 0 
\end{smallmatrix}\right] \text{ and } B = \left[\begin{smallmatrix}
	   &  & \vdots\\
	   &  &      0 \\
	\cdots & 0 & 1 
\end{smallmatrix}\right].\end{equation} 

It follows that $\r_{n-1}= 
\r^{1}_{n - 1} + 1=\r^{2}_{n - 1}+0$ and $\c_{n-1} = \c^{1}_{n - 1} + 1 = \c^{2}_{n - 1} +0$. Note that $0 \leq \r^{1}_{n - 1}, \r^{2}_{n - 1}, \c^{1}_{n - 1}, \c^{2}_{n - 1} \leq 2$. Then there are four possibilities.

\begin{itemize}
    \item[$(i)$] $(\r^{1}_{n - 1}, \c^{1}_{n - 1}) = (0, 0)$ and $(\r^{2}_{n - 1}, \c^{2}_{n - 1}) = (1, 1)$.
    \item[$(ii)$] $(\r^{1}_{n - 1}, \c^{1}_{n - 1}) = (1, 0)$ and $(\r^{2}_{n - 1}, \c^{2}_{n - 1}) = (2, 1)$.
    \item[$(iii)$] $(\r^{1}_{n - 1}, \c^{1}_{n - 1}) = (0, 1)$ and $(\r^{2}_{n - 1}, \c^{2}_{n - 1}) = (1, 2)$.
    \item[$(iv)$] $(\r^{1}_{n - 1}, \c^{1}_{n - 1}) = (1, 1)$ and $(\r^{2}_{n - 1}, \c^{2}_{n - 1}) = (2, 2)$.
\end{itemize}

Let us consider every possibility.

\begin{itemize}
    \item[$(i)$] By Lemma \ref{lem:extends_1}, \(|\{(\r', \c') \in \GR(\3_{n - 1}) \ : \r'_{n-1} = \c'_{n-1} = 0\}| = |\GR(\3_{n - 2})|.\)
    It follows that the number of collisions of the form $(i)$ doesn't exceed $|\GR(\3_{n - 2})|$. On the other hand, given any $(\r'', \c'')\in \GR(\3_{n-2})$, we can extend it twice with \begin{equation*}A = \left[\begin{smallmatrix}
	   &  & \vdots & \\
	   &  &      0 &  \\
	\cdots & 0 & 0 & 1 \\ 
	   &  & 1 & 0
\end{smallmatrix}\right], \qquad B = \left[\begin{smallmatrix}
	   &  & \vdots & \\
	   &  &      0 &  \\
	\cdots & 0 & 1 & 0 \\ 
	 &  & 0 & 1
\end{smallmatrix}\right]\end{equation*}
to obtain a collision of the form $(i)$. That is, there are exactly $|\GR(\3_{n - 2})|$ collisions of the form $(i)$.
\end{itemize}

 The remaining three cases are almost completely analogous. We provide them fully for the sake of completeness. Notice that in case $(iv)$ we estimate the number of collisions using $(\r^2, \c^2)$ instead of $(\r^1, \c^1)$.
\begin{itemize}
\item[$(ii)$] By Lemma \ref{lem:extends_1}, \(|\{(\r', \c') \in \GR(\3_{n - 1}) \ : \r'_{n-1} = 1, \c'_{n-1} = 0\}| = |\GR(\3_{n - 2})|.\)
    It follows that the number of collisions of the form $(ii)$ doesn't exceed $|\GR(\3_{n - 2})|$. On the other hand, given any $(\r'', \c'')\in \GR(\3_{n-2})$, we can extend it twice with \begin{equation*}A = \left[\begin{smallmatrix}
	   &  & \vdots & \\
	   &  &      0 &  \\
	\cdots & 1 & 0 & 1 \\ 
	   &  & 1 & 0
\end{smallmatrix}\right], \qquad B = \left[\begin{smallmatrix}
	   &  & \vdots & \\
	   &  &      0 &  \\
	\cdots & 1 & 1 & 0 \\ 
	 &  & 0 & 1
\end{smallmatrix}\right]\end{equation*}
to obtain a collision of the form $(ii)$. That is, there are exactly $|\GR(\3_{n - 2})|$ collisions of the form $(ii)$.

\item[$(iii)$] By Lemma \ref{lem:extends_1}, \(|\{(\r', \c') \in \GR(\3_{n - 1}) \ : \r'_{n-1} = 0, \c'_{n-1} = 1\}| = |\GR(\3_{n - 2})|.\)
    It follows that the number of collisions of the form $(iii)$ doesn't exceed $|\GR(\3_{n - 2})|$. On the other hand, given any $(\r'', \c'')\in \GR(\3_{n-2})$, we can extend it twice with \begin{equation*}A = \left[\begin{smallmatrix}
	   &  & \vdots & \\
	   &  &      1 &  \\
	\cdots & 0 & 0 & 1 \\ 
	   &  & 1 & 0
\end{smallmatrix}\right], \qquad B = \left[\begin{smallmatrix}
	   &  & \vdots & \\
	   &  &      1 &  \\
	\cdots & 0 & 1 & 0 \\ 
	 &  & 0 & 1
\end{smallmatrix}\right]\end{equation*}
to obtain a collision of the form $(iii)$. That is, there are exactly $|\GR(\3_{n - 2})|$ collisions of the form $(iii)$.

\item[$(iv)$] By Lemma \ref{lem:extends_1}, \(|\{(\r', \c') \in \GR(\3_{n - 1}) \ : \r'_{n-1} = \c'_{n-1} = 2\}| = |\GR(\3_{n - 2})|.\)
    It follows that the number of collisions of the form $(iv)$ doesn't exceed $|\GR(\3_{n - 2})|$. On the other hand, given any $(\r'', \c'')\in \GR(\3_{n-2})$, we can extend it twice with \begin{equation*}A = \left[\begin{smallmatrix}
	   &  & \vdots & \\
	   &  &      1 &  \\
	\cdots & 1 & 0 & 1 \\ 
	   &  & 1 & 0
\end{smallmatrix}\right], \qquad B = \left[\begin{smallmatrix}
	   &  & \vdots & \\
	   &  &      1 &  \\
	\cdots & 1 & 1 & 0 \\ 
	 &  & 0 & 1
\end{smallmatrix}\right]\end{equation*}
to obtain a collision of the form $(iv)$. That is, there are exactly $|\GR(\3_{n - 2})|$ collisions of the form $(iv)$.
\end{itemize}

\textbf{III.} Note that there are no ``colliding triplets'', since Remark \ref{rem:extends_2} leaves no third option in \eqref{eq:pair}.

Finally, the inclusion-exclusion principle gives $|\GR(\3_{n})| = 8|\GR(\3_{n-1})| - 4|\GR(\3_{n-2})|$.

\end{proof}

\subsection{Subforests of the complete $2\times n$ grid graph}\label{subsec:forests:2n}

\begin{definition}\label{def:complete_grid_graph}\rm
    The {\em complete $2 \times n$ grid graph} $Grid^2_n$ (sometimes also called a ladder graph) is the graph with \[V(Grid^2_n) = \{v^1, \ldots, v^n, v_1, \ldots, v_n\}\] and \[E(Grid^2_n) = \{(v^i, v^{i + 1}) : i < n\} \cup \{(v_i, v_{i + 1}) : i < n\} \cup \{(v^i, v_i) : i \leq n\}.\]
\end{definition}

    \begin{center}
   	\begin{tikzpicture}[node distance={10mm}, nodes={draw, minimum size=20pt}, main/.style = {draw, circle}] 
   
   \scriptsize
    
\node[main] (1) [dotstyle] {$v^{1}$}; 
\node[main] (2) [dotstyle] [right of=1] {$v^{2}$};
\node[main] (3) [dotstyle] [right of=2] {$v^{3}$};
\node[main] (4) [dotstyle] [right of=3] {$\cdots$};
\node[main] (5) [dotstyle] [right of=4] {$v^{n}$};

\node[main] (11) [dotstyle] [below of=1]{$v_{1}$}; 
\node[main] (12) [dotstyle] [right of=11] {$v_{2}$};
\node[main] (13) [dotstyle] [right of=12] {$v_3$};
\node[main] (14) [dotstyle] [right of=13] {$\cdots$};
\node[main] (15) [dotstyle] [right of=14] {$v_{n}$};

\draw (1) -- (2);
\draw (2) -- (3);
\draw (3) -- (4);
\draw (4) -- (5);

\draw (11) -- (12);
\draw (12) -- (13);
\draw (13) -- (14);
\draw (14) -- (15);

\draw(1) -- (11);
\draw(2) -- (12);
\draw(3) -- (13);
\draw(4) -- (14);
\draw(5) -- (15);
\end{tikzpicture}

\end{center}

\begin{definition}\rm
    A subgraph of $Grid^2_n$ is called a {\em $2 \times n$ grid graph}.
\end{definition}

    In other words, a $2\times n$ grid graph is a graph whose vertices correspond to the points on the plane with integer coordinates, $x$-coordinates being in the range $1, \ldots, n$, $y$-coordinates being $0$ or $1$. Two vertices can be connected by an edge only if the corresponding points are at distance $1$.  In these terms, $v_i = (i, 0)$ and $v^i = (i, 1)$.

    It turns out that the number $|\GR(\3_n)|$ is also the number of subforests of the complete $2\times n$ grid graph. The explicit formula that coincides with \eqref{formula:GR} was derived in \cite{Desjarlais}. This is also a corollary of our general formula for complete $2\times n$ grid graphs with $k$ colors, see Corollary \ref{cor:first-equality}. This sequence is \href{https://oeis.org/A022026}{OEIS~A022026}.

    Formally, 
    \begin{corollary}\label{cor:GR3n=F_grid}
        $|\GR(\3_n)| = |\F(Grid^2_n)|$.
    \end{corollary}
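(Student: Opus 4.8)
The plan is to show that the integer sequence $f_n := |\F(Grid^2_n)|$ obeys exactly the same recurrence and the same initial data as $|\GR(\3_n)|$ in \eqref{formula:GR}. Since Remark~\ref{rem:n=1}, Lemma~\ref{lem:n=2}, and Theorem~\ref{thm:3:matrices} pin $|\GR(\3_n)|$ down completely by those data, the two sequences must then coincide term by term, which is the claim. The tool I would use is a transfer-matrix recursion on the columns of the ladder. I build $Grid^2_n$ from $Grid^2_{n-1}$ by appending the pair of vertices $v^n, v_n$ together with at most the three new edges: the top edge $(v^{n-1}, v^n)$, the bottom edge $(v_{n-1}, v_n)$, and the rung $(v^n, v_n)$. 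For a subforest of the first $i$ columns I record a single bit of state: whether the two vertices $v^i, v_i$ of the last column lie in the same component (state $C$) or in different components (state $S$). Let $a_i, c_i$ count the subforests whose last column is in state $S$, $C$ respectively, so that $f_i = a_i + c_i$.

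The combinatorial heart of the argument is to classify the $2^3$ ways of choosing which of the three new edges to include, as a function of the previous state, recording (a) whether the choice is cycle-free and (b) the resulting state. The key observation is that the newly appended vertices $v^n, v_n$ have no other incident edges, so the \emph{only} way a cycle can close is to take all three edges $\{\text{top},\text{bottom},\text{rung}\}$ while $v^{n-1}$ and $v_{n-1}$ are already connected, i.e. the previous state is $C$; this single configuration is the lone forbidden extension. Tracking the resulting connectivity of $v^n, v_n$ through the remaining seven (respectively eight) admissible choices yields the linear transition
\[
\begin{pmatrix} a_i \\ c_i \end{pmatrix} = T \begin{pmatrix} a_{i-1} \\ c_{i-1} \end{pmatrix}, \qquad T = \begin{pmatrix} 4 & 3 \\ 4 & 4 \end{pmatrix},
\]
where the entry $3$ reflects precisely that one of the four state-$C$-to-state-$C$ extensions is deleted to avoid a cycle. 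The base case $i=1$ (only the rung is available) gives $a_1 = c_1 = 1$, so $f_1 = 2$, and applying $T$ once gives $a_2 = 7,\ c_2 = 8$, hence $f_2 = 15$, matching the initial terms of \eqref{formula:GR}.

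To extract the scalar recurrence, I would invoke the characteristic polynomial of $T$, namely $\lambda^2 - 8\lambda + 4$, so that by Cayley--Hamilton $T^2 = 8T - 4I$. Multiplying the vector recurrence on the left by this relation gives $f_n = 8 f_{n-1} - 4 f_{n-2}$ for $n > 2$, which together with $f_1 = 2$, $f_2 = 15$ is identical to \eqref{formula:GR}; comparison with $|\GR(\3_n)|$ then finishes the proof. I expect the main obstacle to be the careful cycle-avoidance bookkeeping in building $T$: one must verify rigorously that including both horizontal edges without the rung never closes a cycle (even from state $C$), and that the unique forbidden case is all three edges from state $C$, while simultaneously getting the resulting states right so that the two rows of $T$ are correct. (Alternatively, one may bypass the direct count entirely by appealing to the explicit formula of~\cite{Desjarlais} or, within this paper, to the colored generalization of Corollary~\ref{cor:first-equality}; the transfer-matrix computation above is the self-contained route.)
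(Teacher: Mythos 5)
Your argument is correct and is essentially the paper's own: your two states $S$ and $C$ are exactly the non-semicyclic/semicyclic dichotomy of Section~\ref{sec:k:colors}, your transfer matrix $T$ is the $k=1$ specialization of the recurrences in Lemmas~\ref{lem:a:no} and~\ref{lem:a:semi} (namely $a^\no_n = 4a^\no_{n-1}+3a^\semi_{n-1}$, $a^\semi_n = 4a^\no_{n-1}+4a^\semi_{n-1}$), and Cayley--Hamilton replaces the algebraic elimination carried out in Theorem~\ref{thm:an}. The only cosmetic slip is the remark attached to the entry $3$ of $T$: the forbidden all-three-edges extension from state $C$ would itself have landed in state $C$, so it reduces the $C\to C$ count from $5$ to $4$, while $3$ is simply the number of $C\to S$ extensions.
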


    Our investigations suggest that this connection is not a coincidence. In the next two sections, we generalize this connection in two different directions.

\section{Subforests of the complete $k$-colored $2\times n$ grid graph}\label{sec:k:colors}

    We may generalize the set of $2\times n$ grid graphs by coloring the edges with $k$ colors. Let $\G(n, k)$ be the set of all $k$-colored graphs on the $2\times n$ grid. Observe that $|\G(n, k)| = (k+1)^{3n - 2}$, since every edge can either be absent or colored with one of the $k$ colors.
    
    We are interested in the number of subforests in this new setting. Naturally, two forests with different colorings are considered different. Let $\F(n, k)$ denote the set of $k$-colored subforests of the complete $2\times n$ grid.

    In this section, we find the formula for $|\F(n, k)|$. In particular, we obtain the result from \cite{Desjarlais} as a particular case for $k = 1$. 

    Case $n = 1$ is trivial:
    \begin{remark}\label{rem:forests:n=1}
        $|\F(1, k)| = k + 1$.
    \end{remark}

    \begin{definition}\rm
        We say that a graph $G \in \G(n, k)$ continues a graph $G' \in \G(n-1,k)$ if $G'$ is an induced subgraph of $G$ with vertices $\{v^1, \ldots, v^{n-1}, v_1, \ldots, v_{n-1}\}$. We call $G'$ {\em the left subgraph of $G$}.
    \end{definition}

    \begin{lemma}
     Let $F \in \F(n-1, k)$.  There are $(k + 1)^3$ ways to continue $F$ to some $G \in \G(n, k)$.
     
     Moreover, if $G \not\in \F(n, k)$, then $\{(v^{n-1}, v^{n}), (v_{n-1}, v_{n}), (v^{n}, v_{n})\} \subseteq E(G)$.
    \end{lemma}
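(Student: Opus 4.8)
The plan is to treat the two assertions separately, since the first is essentially bookkeeping while the second carries the combinatorial content. For the counting claim, I would observe that the only vertices and edges of $Grid^2_n$ not already present in the complete $(n-1)$-grid are the two new vertices $v^n, v_n$ together with the three edges $(v^{n-1}, v^{n})$, $(v_{n-1}, v_{n})$, and $(v^{n}, v_{n})$. Since $F$ already fixes the entire left subgraph, a continuation $G \in \G(n,k)$ is determined precisely by a choice of state for each of these three edges, and each of them can independently be absent or carry one of the $k$ colors. This yields $(k+1)^3$ continuations, and the coloring plays no further role in the remainder of the argument.

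For the second assertion, the key observation is that every cycle of $G$ must use at least one of the three new edges. Indeed, a cycle avoiding all three would lie entirely within the left subgraph and hence inside $F$, contradicting the assumption that $F$ is a forest. So I would fix a cycle $C$ in $G$, which exists because $G \notin \F(n,k)$, and show that $C$ is forced to contain all three new edges.

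The engine of this step is that each new vertex has degree at most two in $G$: the only edges incident to $v^{n}$ are $(v^{n-1}, v^{n})$ and $(v^{n}, v_{n})$, and the only edges incident to $v_{n}$ are $(v_{n-1}, v_{n})$ and $(v^{n}, v_{n})$. Since $C$ uses a new edge, it must pass through a new vertex, and because every vertex lying on a cycle is incident to exactly two of its edges, $C$ uses \emph{both} edges at that vertex. If $C$ passes through $v^{n}$, it therefore uses $(v^{n}, v_{n})$, which carries the cycle to $v_{n}$ and forces both of $v_{n}$'s edges as well; the situation is symmetric if $C$ passes through $v_{n}$. In either case all three new edges belong to $C$, hence to $G$, which is exactly the claim.

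I do not anticipate a serious obstacle; the single point requiring care is the cycle convention. One must invoke the paper's definition of a \emph{cycle} as a circuit with no repeated vertices, so that each of its vertices is incident to precisely two cycle-edges. This is what makes the ``degree at most two'' observation bite and rules out a new vertex sitting on a cycle through only one of its incident edges.
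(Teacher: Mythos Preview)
Your proposal is correct and follows essentially the same approach as the paper. The paper's proof is terser---it simply asserts that the cycle ``must contain'' all three new edges once one observes that $F$ is acyclic---whereas you spell out the degree-two argument at $v^n$ and $v_n$ that makes this inevitable; this is exactly the reasoning the paper leaves implicit.
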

    \begin{proof}
    When continuing the forest $F$, we have a choice whether to add edges $(v^{n-1}, v^{n})$, $(v_{n-1}, v_{n})$ and $(v^{n}, v_{n})$. Thus there are $(k + 1)^3$ ways to continue $F$ to some $G \in \G(n, k)$.
    
    Assume that $G \not\in \F(n, k)$. It follows that there is a cycle in $G$. On the other hand, there are no cycles in $F$. Thus the cycle must contain edges $(v^{n-1}, v^{n}), (v_{n-1}, v_{n})$ and $(v^{n}, v_{n})$.
    \end{proof}
   
   \begin{definition}\label{def:semicyclic} \rm
   Let $F \in \F(n-1, k)$. Let $G \in \G(n, k)$ be a continuation of $F$ with $\{(v^{n-1}, v^n), (v_{n-1}, v_n), (v^{n}, v_n)\} \subseteq E(G)$. We say that $F$ {\em is semicyclic} if $G$ has a cycle.
   \end{definition}
    
    \begin{lemma}\label{lem:semicyclic}
    Let $F \in \F(n-1, k)$. Then $F$ is semicyclic if and only if for some $i \in \{1, \ldots, n-1\}$ we have \begin{equation}\label{eq:semicyclic}
    \{(v^i, v_i)\} \cup \{(v^i, v^{i+1}), \ldots, (v^{n-2}, v^{n-1})\} \cup \{(v_i, v_{i+1}), \ldots, (v_{n-2}, v_{n-1})\} \subseteq E(F).
    \end{equation}
    For $i = n-1$ Condition \eqref{eq:semicyclic} simply means $(v^{n-1}, v_{n-1}) \in E(F)$.
    \end{lemma}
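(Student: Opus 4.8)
The plan is to first translate the notion of being semicyclic into a pure connectivity statement, and then to analyse the structure of paths in the ladder. By Definition~\ref{def:semicyclic}, $F$ is semicyclic precisely when the graph $G$ obtained from $F$ by adding the three edges $(v^{n-1},v^n)$, $(v_{n-1},v_n)$, $(v^n,v_n)$ contains a cycle. Since $v^n$ and $v_n$ are new vertices, each of degree $2$ in $G$, any cycle of $G$ must pass through both of them and hence use all three added edges, traversing the chain $v^{n-1}-v^n-v_n-v_{n-1}$; closing this chain requires a path inside $F$ joining $v^{n-1}$ and $v_{n-1}$, and conversely such a path together with the chain yields a cycle. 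Thus the first step establishes that $F$ is semicyclic if and only if $v^{n-1}$ and $v_{n-1}$ lie in the same connected component of $F$. The colours play no role here, as they do not affect the cycle structure.

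The core of the argument is a structural description of simple paths in the ladder. I claim that every simple path in $Grid^2_{n-1}$ joining $v^{n-1}$ and $v_{n-1}$ has the \emph{U-shape}
\[
v^{n-1},\, v^{n-2},\, \ldots,\, v^i,\, v_i,\, v_{i+1},\, \ldots,\, v_{n-1}
\]
for some $i \in \{1, \ldots, n-1\}$; that is, it descends along the top row to some column $i$, crosses the single rung $(v^i,v_i)$, and returns along the bottom row. I would prove this by induction on the number of columns $m$ (taking $m=n-1$). The base case $m=1$ is the single rung. For the inductive step, consider a simple path $P$ from $v^m$ to $v_m$. If $P$ uses the rung $(v^m,v_m)$, then, since both endpoints have degree one in $P$, this rung is the unique edge at each endpoint and hence constitutes the whole path, giving $i=m$. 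Otherwise $P$ leaves $v^m$ via $(v^m,v^{m-1})$ and enters $v_m$ via $(v_{m-1},v_m)$; as the only vertices in column $m$ are the endpoints themselves, the portion of $P$ strictly between $v^{m-1}$ and $v_{m-1}$ avoids column $m$ and therefore lies in $Grid^2_{m-1}$. The induction hypothesis makes it U-shaped, and prepending $v^m$ and appending $v_m$ preserves the U-shape for the same $i$.

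Finally, I would combine the two steps. If Condition~\eqref{eq:semicyclic} holds for some $i$, the listed edges form exactly a U-shaped path from $v^{n-1}$ to $v_{n-1}$, so these vertices are connected in $F$ and $F$ is semicyclic. Conversely, if $F$ is semicyclic, the first step provides a path in $F$ between $v^{n-1}$ and $v_{n-1}$; by the structural claim this path is U-shaped for some $i$, and its edge set is precisely the left-hand side of \eqref{eq:semicyclic}, which is therefore contained in $E(F)$.

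The main obstacle is the structural claim, since a priori a path could zigzag between the rows using several rungs, and one must rule this out. The inductive argument handles it cleanly because the two vertices of the last column are ``trapped'': once the rung there is excluded, the path can neither leave nor re-enter column $m$, which forces the recursion into a strictly smaller ladder. The rest of the proof is routine once the connectivity reformulation and this shape lemma are in place, and the colouring is a red herring throughout, as cycles and connectivity are insensitive to edge colours.
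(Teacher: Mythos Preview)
Your proof is correct and follows the same underlying idea as the paper's: any cycle in $G$ must use the three new edges (since $F$ is acyclic and the new vertices have degree~$2$), so what remains is a path in $F$ from $v^{n-1}$ to $v_{n-1}$, and any such path in the ladder is U-shaped. The paper compresses this into a single sentence and a picture (``Since $F$ is a forest, the only possible cycle in $G$ is \ldots''), whereas you spell out both steps---the connectivity reformulation and the inductive shape lemma---explicitly; your version is more detailed but not a different approach.
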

    \begin{proof}
    Let $G\in \G(n, k)$ be a continuation of $F$ with  $\{(v^{n-1}, v^{n}), (v_{n-1}, v_{n}), (v^{n}, v_{n})\} \subseteq E(G)$. Since $F$ is a forest, the only possible cycle in $G$ is

    \begin{center}
   	\begin{tikzpicture}[node distance={10mm}, nodes={draw, minimum size=20pt}, main/.style = {draw, circle}] 
   
   \scriptsize
    
\node[main] (1) [dotstyle] {$v^{i}$}; 
\node[main] (2) [dotstyle] [right of=1] {$v^{i+1}$};
\node[main] (3) [dotstyle] [right of=2] {$\cdots$};
\node[main] (4) [dotstyle] [right of=3] {$v^{n-1}$};
\node[main] (5) [dotstyle] [right of=4] {$v^{n}$};

\node[main] (11) [dotstyle] [below of=1]{$v_{i}$}; 
\node[main] (12) [dotstyle] [right of=11] {$v_{i+1}$};
\node[main] (13) [dotstyle] [right of=12] {$\cdots$};
\node[main] (14) [dotstyle] [right of=13] {$v_{n-1}$};
\node[main] (15) [dotstyle] [right of=14] {$v_{n}$};

\draw (1) -- (2);
\draw (2) -- (3);
\draw (3) -- (4);
\draw (4) -- (5);

\draw (11) -- (12);
\draw (12) -- (13);
\draw (13) -- (14);
\draw (14) -- (15);

\draw(1) -- (11);
\draw(5) -- (15);
\end{tikzpicture}

\end{center}

    That is, there exists $i \in \{1, \ldots, n-1\}$ such that 
    \[\{(v^i, v_i)\} \cup \{(v^i, v^{i+1}), \ldots, (v^{n-2}, v^{n-1})\} \cup \{(v_i, v_{i+1}), \ldots, (v_{n-2}, v_{n-1})\} \subseteq E(F).\]
    
    Thus $F$ is semicyclic if and only if Condition \eqref{eq:semicyclic} is satisfied.
  	\end{proof}
    
    The set of the semicyclic forests of $\F(n, k)$ is denoted by $\F^\semi(n, k)$. The set of the forests that are not semicyclic is denoted by $\F^\no(n, k)$.
    
   To simplify further notation we introduce the following sequences for the fixed $k$:
   \[\begin{cases}
   a_{n} = |\F(n, k)|; \\
   a^\semi_{n} = |\F^\semi(n, k)|; \\
   a^\no_{n} = |\F^\no(n, k)|. \\
   \end{cases}\]
   
   Our goal is to find a formula for $a_n$. We begin with a simple observation.
   
   \begin{remark}\label{rem:a1}
   $\begin{cases}
   a_{1} = k + 1; \\
   a^\semi_{1} = k; \\
   a^\no_{1} = 1. \\
   \end{cases}$
   \end{remark}

    Now we can find a recursive formula for $a_n^\no$.
    \begin{lemma}\label{lem:a:no}
    Let $n > 1$. Then $a_{n}^\no = (2k + 1)a_{n - 1} + k^2 a_{n-1}^\no$.
    \end{lemma}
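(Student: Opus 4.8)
The plan is to set up a bijection between $\F^\no(n,k)$ and the tuples consisting of a left subgraph $F' \in \F(n-1,k)$ together with an admissible choice of the three potential new edges $e_1 = (v^{n-1}, v^n)$, $e_2 = (v_{n-1}, v_n)$, $e_3 = (v^n, v_n)$ (each absent or bearing one of the $k$ colors), and then to count the admissible edge-choices according to whether $F'$ is semicyclic. The map $F \mapsto (F', e_1, e_2, e_3)$ is clearly a bijection onto its image, with $F'$ ranging over all of $\F(n-1,k)$, so everything reduces to identifying which edge-choices are admissible.

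First I would observe that any $F \in \F^\no(n,k)$ must have $e_3 \notin E(F)$. Indeed, by the characterization of semicyclic forests in Lemma~\ref{lem:semicyclic} (applied to $F \in \F(n,k)$, i.e.\ with the parameter shifted by one), the index $i = n$ already makes $F$ semicyclic as soon as $(v^n, v_n) \in E(F)$, because for this index both horizontal edge-sets appearing in~\eqref{eq:semicyclic} are empty. Conversely, once $e_3$ is absent the vertices $v^n$ and $v_n$ each have degree at most $1$, so no cycle can pass through them; as $F'$ is acyclic, $F$ is then automatically a forest. Thus the forest condition is free, and on the $e_3$-absent slice only the non-semicyclic condition constrains the edge-choices.

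Next I would reduce the semicyclic condition for $F$ to one on $F'$. For an index $i < n$, the containment in~\eqref{eq:semicyclic} forces both $e_1$ and $e_2$ to lie in $E(F)$, and the remaining requirement is exactly
\[
\{(v^i, v_i)\} \cup \{(v^i, v^{i+1}), \ldots, (v^{n-2}, v^{n-1})\} \cup \{(v_i, v_{i+1}), \ldots, (v_{n-2}, v_{n-1})\} \subseteq E(F'),
\]
which is precisely the statement that $F'$ is semicyclic, again by Lemma~\ref{lem:semicyclic}. Hence, assuming $e_3$ is absent, $F$ is semicyclic if and only if both $e_1$ and $e_2$ are present and $F'$ is semicyclic. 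Equivalently, $F \in \F^\no(n,k)$ if and only if $e_3$ is absent and it is \emph{not} the case that both $e_1, e_2$ are present while $F'$ is semicyclic.

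Finally I would count. Fixing $F'$ and setting $e_3$ absent, the pair $(e_1, e_2)$ has $(k+1)^2$ choices in total; these are all admissible when $F'$ is non-semicyclic, whereas when $F'$ is semicyclic we must discard the $k^2$ choices in which both edges are colored, leaving $(k+1)^2 - k^2 = 2k+1$. Summing over $F' \in \F(n-1,k)$ yields $a_n^\no = (2k+1)\,a_{n-1}^\semi + (k+1)^2\,a_{n-1}^\no$, and substituting $a_{n-1} = a_{n-1}^\semi + a_{n-1}^\no$ together with $(k+1)^2 = k^2 + (2k+1)$ rearranges this into $a_n^\no = (2k+1)\,a_{n-1} + k^2\,a_{n-1}^\no$, as claimed. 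The step to get right is the reduction in the preceding paragraph — in particular the observation that a present rightmost rung $e_3$ alone already forces $F$ to be semicyclic, which confines the entire count to the $e_3$-absent slice — after which the enumeration is routine.
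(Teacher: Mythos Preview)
Your proof is correct and follows essentially the same approach as the paper: both deduce $e_3 \notin E(F)$ from Lemma~\ref{lem:semicyclic}, then split the count according to whether $\{e_1,e_2\}\subseteq E(F)$ (the paper) or equivalently whether $F'$ is semicyclic (you). The only cosmetic difference is that the paper obtains $(2k+1)a_{n-1}+k^2a_{n-1}^{\no}$ directly, whereas you first arrive at $(2k+1)a_{n-1}^{\semi}+(k+1)^2a_{n-1}^{\no}$ and then regroup.
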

    \begin{proof}
    Let $F \in \F^\no(n, k)$, and let $F' \in \F(n-1,k)$ be its left subgraph.  Then $(v^n, v_n) \not\in E(F)$ by Lemma \ref{lem:semicyclic}. There are two possibilities:

    \begin{enumerate} 
    \item $\{(v^{n-1}, v^n), (v_{n-1}, v_n)\} \not\subseteq E(F)$, i.e. at least one of the edges $(v^{n-1}, v^n), (v_{n-1}, v_n)$ is not in $F$. In this case, $F$ is not semicyclic regardless of $F'$. Thus there are $(k+k+1)a_{n-1}$ non-semicyclic graphs with this property.

    \item $\{(v^{n-1}, v^n), (v_{n-1}, v_n)\} \subseteq E(F)$. In this case, $F$ is not semicyclic if and only if $F'$ is not semicyclic. Thus there are $k^2a^\no_{n-1}$ non-semicyclic graphs with this property.
    \end{enumerate}
    
    Finally,  $a_{n}^\no = (2k + 1)a_{n - 1} + k^2 a_{n-1}^\no$.
     \end{proof}

    Having found the recursive formula for $a_n^\no$, we can find a recursive formula for $a_n^\semi$.
    \begin{lemma}\label{lem:a:semi}
    Let $n > 1$. Then $a_{n}^\semi = ka^\no_{n} +  k^2 a_{n-1}^\semi$.
    \end{lemma}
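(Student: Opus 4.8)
The plan is to partition the semicyclic forests in $\F^\semi(n,k)$ according to whether the last rung $(v^n,v_n)$ is present, and to match the two resulting classes with $\F^\no(n,k)$ and $\F^\semi(n-1,k)$ respectively. The guiding observation is that, by Lemma~\ref{lem:semicyclic} applied at level $n$, a forest $F\in\F(n,k)$ is semicyclic exactly when it contains a ``ladder'' from some index $i$ to $n$, i.e. the rung $(v^i,v_i)$ together with all horizontal edges $(v^i,v^{i+1}),\dots,(v^{n-1},v^n)$ and $(v_i,v_{i+1}),\dots,(v_{n-1},v_n)$; for $i=n$ this degenerates to the single rung $(v^n,v_n)$. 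In particular, if $(v^n,v_n)\in E(F)$ then $F$ is automatically semicyclic, while if $(v^n,v_n)\notin E(F)$ a semicyclic $F$ must have witness $i<n$ and hence must contain both horizontal edges $(v^{n-1},v^n)$ and $(v_{n-1},v_n)$.

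First I would treat the case $(v^n,v_n)\in E(F)$. I would send such an $F$ to the pair consisting of the forest $F^\circ:=F-(v^n,v_n)$ and the colour of the deleted rung, and claim this is a bijection onto $\F^\no(n,k)\times\{1,\dots,k\}$. That $F^\circ$ is again acyclic is immediate; the point is that $F^\circ$ is non-semicyclic: were it semicyclic, it would contain a ladder from some $p<n$ to $n$ (it cannot have witness $n$, as the rung was removed), and reinserting $(v^n,v_n)$ would close this ladder into a cycle in $F$, contradicting that $F$ is a forest. Conversely, given $F^\circ\in\F^\no(n,k)$ (which in particular has no last rung, since otherwise it would be semicyclic) and a colour, I would add the rung and argue the result is a forest: any cycle created must run through $(v^n,v_n)$, so by the ladder structure used in Lemma~\ref{lem:semicyclic} it is a rectangle from some $p<n$ to $n$, forcing $F^\circ$ to contain the ladder from $p$ to $n$ and hence to be semicyclic, a contradiction. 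This class therefore contributes $k\,a_n^\no$.

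Next I would treat the case $(v^n,v_n)\notin E(F)$, where both horizontals $(v^{n-1},v^n)$ and $(v_{n-1},v_n)$ are present. Here I would delete the last column, i.e. remove these two horizontal edges together with the (now isolated) vertices $v^n,v_n$, recording their two colours. The resulting $F'$ is the left subgraph of $F$, and it is semicyclic at level $n-1$ with the same witness $i<n$; conversely, attaching two coloured horizontal leaves to any $F'\in\F^\semi(n-1,k)$ cannot create a cycle and yields a semicyclic forest with no last rung. This gives a bijection onto $\F^\semi(n-1,k)\times\{1,\dots,k\}^2$ and contributes $k^2 a_{n-1}^\semi$. Summing the two disjoint classes yields $a_n^\semi=k\,a_n^\no+k^2 a_{n-1}^\semi$.

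The main obstacle I anticipate is the acyclicity bookkeeping in the rung-present case: one must be sure that deleting, and especially re-adding, the last rung stays within the forest world. Both directions reduce to the single structural fact that every cycle in the $2\times n$ grid passing through a prescribed rung is a ``rectangle'' (two rungs plus the intervening horizontals), which is exactly the description of cycles already exploited in the proof of Lemma~\ref{lem:semicyclic}; once this is invoked, the equivalence ``contains the closing cycle'' $\Leftrightarrow$ ``is semicyclic'' is immediate. The second case is routine, since adjoining leaves never affects acyclicity or the witness index.
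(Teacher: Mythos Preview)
Your proof is correct and follows essentially the same approach as the paper: both split $\F^\semi(n,k)$ according to whether the last rung $(v^n,v_n)$ is present, identify the rung-present class with $k$ copies of $\F^\no(n,k)$ via deletion/insertion of the rung, and identify the rung-absent class with $k^2$ copies of $\F^\semi(n-1,k)$ via passage to the left subgraph. Your write-up is in fact more explicit than the paper's about why the bijections are well-defined (in particular, why re-inserting the rung into a non-semicyclic forest cannot create a cycle, and why every $F^\circ\in\F^\no(n,k)$ automatically lacks the last rung), but the underlying argument is the same.
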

    \begin{proof}
     Let $F \in \F^\semi(n, k)$ and let $F' \in \F(n-1,k)$ be its left subgraph. There are two possibilities:

     \begin{enumerate}
     \item $(v^n, v_n) \in E(F)$. This condition guarantees that $F$ is semicyclic. Let $\hat{F} \in \F(n, k)$ be such that $E(\hat{F}) = E(F) \setminus \{(v^n, v_n)\}$. Observe that $F$ is a forest if and only if $\hat{F}$ is not semicyclic.
       Thus there are $ka^\no_{n}$ semicyclic forests in $\F(n, k)$ with $(v^n, v_n) \in E(F)$.

\item $(v^n, v_n) \not\in E(F)$. In this case, $F$ is semicyclic if and only if the following two conditions hold:
     \begin{enumerate}
     \item $\{(v^{n-1}, v^n), (v_{n-1}, v_n)\} \subseteq E(F)$.
\item $F'$ is semicyclic.
     \end{enumerate}
     
      It follows that there are $k^2 a^\semi_{n-1}$ semicyclic forests in $\F(n, k)$ with $(v^n, v_n) \not\in E(F)$.
     \end{enumerate}
     Finally, $a_{n}^\semi = ka^\no_{n} +  k^2 a_{n-1}^\semi$.
    \end{proof}

  Now we can explicitly find the second element of the sequence $(a_n)$.
  \begin{corollary}\label{cor:a2}
  $a_2 = 4k^3 + 6k^2 + 4k + 1$.
  \end{corollary}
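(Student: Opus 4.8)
The plan is to compute $a_2$ directly by assembling the recursive formulas established in Remark~\ref{rem:a1} and Lemmas~\ref{lem:a:no} and~\ref{lem:a:semi}, together with the decomposition $a_n = a_n^\semi + a_n^\no$. Since everything needed is already in place for $n=2$, this is purely a matter of substituting $n=2$ into the recurrences and unwinding them down to the known base values at $n=1$.

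First I would record the base case from Remark~\ref{rem:a1}: $a_1 = k+1$, $a_1^\semi = k$, and $a_1^\no = 1$. Next I would apply Lemma~\ref{lem:a:no} with $n=2$ to get
\[
a_2^\no = (2k+1)a_1 + k^2 a_1^\no = (2k+1)(k+1) + k^2 = 3k^2 + 3k + 1.
\]
Then I would apply Lemma~\ref{lem:a:semi} with $n=2$, which requires the value of $a_2^\no$ just computed, to obtain
\[
a_2^\semi = k a_2^\no + k^2 a_1^\semi = k(3k^2 + 3k + 1) + k^3 = 4k^3 + 3k^2 + k.
\]

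Finally I would combine the two pieces using the evident identity $a_2 = a_2^\semi + a_2^\no$, since every forest in $\F(2,k)$ is either semicyclic or not. This yields
\[
a_2 = (4k^3 + 3k^2 + k) + (3k^2 + 3k + 1) = 4k^3 + 6k^2 + 4k + 1,
\]
which is the claimed value. There is no genuine obstacle here: the only point requiring a moment's care is the order of application — Lemma~\ref{lem:a:semi} expresses $a_n^\semi$ in terms of $a_n^\no$ (same index $n$), so I must compute $a_2^\no$ before $a_2^\semi$. As a consistency check, one may note that $4k^3 + 6k^2 + 4k + 1 = (k+1)^4 - k^4$, which is plausible given that $|\G(2,k)| = (k+1)^{3\cdot 2 - 2} = (k+1)^4$ and reflects the single forbidden cyclic configuration at $n=2$.
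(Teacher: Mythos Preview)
Your proof is correct and follows essentially the same approach as the paper: compute $a_2^\no$ via Lemma~\ref{lem:a:no}, then $a_2^\semi$ via Lemma~\ref{lem:a:semi}, and sum. The closing observation that $a_2 = (k+1)^4 - k^4$ is a nice sanity check not present in the paper's version.
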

  \begin{proof}
  By Lemma \ref{lem:a:no} $a^\no_2 = (2k + 1)(k + 1) + k^2 = 3k^2 + 3k + 1$.
  By Lemma \ref{lem:a:semi} $a^\semi_2 = 3k^3 + 3k^2 + k + k^2k = 4k^3 + 3k^2 + k$.
  
  Thus, $a_2 = a^\no_2 + a^\semi_2 = 4k^3 + 6k^2 + 4k + 1$.
  \end{proof}

    Finally, we can find a recursive formula for $a_n$.
  \begin{theorem}\label{thm:an}
  Let $n > 2$. Then $a_{n} = (4k^2 + 3k + 1)a_{n-1} - (k^4 + 2k^3 + k^2)a_{n - 2}$.
  \end{theorem}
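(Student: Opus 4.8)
The plan is to eliminate the two auxiliary sequences $a_n^\no$ and $a_n^\semi$ from the system furnished by Lemmas~\ref{lem:a:no} and~\ref{lem:a:semi} together with the defining identity $a_n = a_n^\no + a_n^\semi$, so as to obtain a closed recursion in $(a_n)$ alone. Since the combinatorial content has already been packaged into those two lemmas, what remains is a purely algebraic substitution; I expect the only genuine hazard to be keeping the polynomial coefficients in $k$ straight.

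First I would collapse the system into a single mixed recursion involving only $a_n$ and $a_{n-1}^\no$. Substituting Lemma~\ref{lem:a:semi} into $a_n = a_n^\no + a_n^\semi$ gives $a_n = (k+1)a_n^\no + k^2 a_{n-1}^\semi$, and replacing $a_{n-1}^\semi$ by $a_{n-1} - a_{n-1}^\no$ and then expanding $a_n^\no$ via Lemma~\ref{lem:a:no} yields, after collecting terms,
\begin{equation}\label{eq:intermediate}
a_n = (3k^2 + 3k + 1)\,a_{n-1} + k^3\,a_{n-1}^\no .
\end{equation}
This already expresses $a_n$ through $a_{n-1}$ and a single residual term $k^3 a_{n-1}^\no$; the coefficient $3k^2+3k+1$ arises from $(k+1)(2k+1)+k^2$ and the coefficient $k^3$ from $(k+1)k^2 - k^2$.

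Next I would remove that residual term. Writing \eqref{eq:intermediate} with $n$ replaced by $n-1$ gives $a_{n-1} = (3k^2+3k+1)a_{n-2} + k^3 a_{n-2}^\no$, which I solve for $k^3 a_{n-2}^\no$. On the other hand, Lemma~\ref{lem:a:no} at index $n-1$ gives $a_{n-1}^\no = (2k+1)a_{n-2} + k^2 a_{n-2}^\no$, so that $k^3 a_{n-1}^\no = k^3(2k+1)a_{n-2} + k^2\bigl(k^3 a_{n-2}^\no\bigr)$. Substituting the expression for $k^3 a_{n-2}^\no$ obtained above turns $k^3 a_{n-1}^\no$ into a combination of $a_{n-1}$ and $a_{n-2}$ only, namely $k^2 a_{n-1} - (k^4+2k^3+k^2)a_{n-2}$. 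Feeding this back into \eqref{eq:intermediate} then produces
\begin{equation*}
a_n = (4k^2 + 3k + 1)\,a_{n-1} - (k^4 + 2k^3 + k^2)\,a_{n-2},
\end{equation*}
as claimed, with the base cases $a_1$ and $a_2$ supplied by Remark~\ref{rem:a1} and Corollary~\ref{cor:a2}. The main obstacle is purely bookkeeping: one must check that the $a_{n-2}$-coefficient collapses to exactly $-(k^4+2k^3+k^2)$, which hinges on the cancellation $k^3(2k+1) - k^2(3k^2+3k+1) = -(k^4+2k^3+k^2)$.
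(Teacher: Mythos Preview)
Your proposal is correct and follows essentially the same route as the paper: both derive the intermediate relation $a_n = (3k^2+3k+1)a_{n-1} + k^3 a_{n-1}^\no$ from Lemmas~\ref{lem:a:no} and~\ref{lem:a:semi}, then eliminate $a_{n-1}^\no$ by combining that relation at level $n-1$ with Lemma~\ref{lem:a:no} at level $n-1$. Your bookkeeping is slightly tidier (you pivot on $k^3 a_{n-2}^\no$ rather than the paper's $k^5 a_{n-2}^\no$), but the argument is the same.
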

  \begin{proof}
  Lemmas \ref{lem:a:no} and \ref{lem:a:semi} state that \(\begin{cases}
  a^\no_{n} = (2k + 1)a_{n-1} + k^2a^\no_{n-1}; \\
  a^\semi_{n} = ka^\no_{n} + k^2a^\semi_{n-1}.
  \end{cases}
  \)
  
\noindent In particular, \begin{align}\label{eq:formula:an}\begin{split}a_{n} &= a^\no_{n} + a^\semi_{n} = (2k + 1)a_{n-1} + k^2{a^\no_{n-1}} + (2k^2 + k)a_{n-1} + k^3{a^\no_{n-1}} + k^2a^\semi_{n - 1}\\ &= (3k^2 + 3k + 1)a_{n-1} + k^3 a^\no_{n-1}.\end{split}\end{align}
    Further, \(a^\no_{n-1} = (2k+1)a_{n-2} + k^2a^\no_{n-2}\). Thus $$a_n =  (3k^2 + 3k + 1)a_{n-1} + (2k^4+k^3)a_{n-2} + k^5a^\no_{n-2}.$$ It follows that $$k^5a^\no_{n-2} = a_n - (3k^2 + 3k + 1)a_{n-1}  - (2k^4 + k^3)a_{n-2}.$$
    Also due to Equation \eqref{eq:formula:an} we conclude $$a_{n-1} =  (3k^2 + 3k + 1)a_{n-2} + k^3 a^\no_{n-2}.$$
 Therefore, $$k^2a_{n-1} = (3k^4 + 3k^3 + k^2)a_{n-2} + a_n - (3k^2 + 3k + 1)a_{n-1}  - (2k^4 + k^3)a_{n-2}.$$
 Finally, $a_n = (4k^2 + 3k + 1)a_{n-1} - (k^4 + 2k^3 + k^2)a_{n - 2}$.
  \end{proof}
  
  As a consequence of Remark \ref{rem:a1}, Corollary \ref{cor:a2} and Theorem \ref{thm:an} we obtain
  \begin{corollary}
  The numbers $|\F(n, k)|$ are defined by $$\begin{cases}
  |\F(1, k)| = a_1 = k + 1; \\
  |\F(2, k)| = a_2 = 4k^3 + 6k^2 + 4k + 1; \\ 
  |\F(n, k)| = a_n = (4k^2 + 3k + 1)a_{n-1} - (k^4 + 2k^3 + k^2)a_{n - 2} \text{ if } n > 2.
  \end{cases}$$
  \end{corollary}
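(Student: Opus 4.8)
The plan is to read off the three cases directly from the results already established, since this statement is purely an assembly of Remark \ref{rem:a1}, Corollary \ref{cor:a2}, and Theorem \ref{thm:an}. Recall that the sequence $a_n$ was \emph{defined} to equal $|\F(n, k)|$, so there is nothing to prove beyond recording the three formulas in a single display. First I would cite Remark \ref{rem:a1} for the base value $a_1 = k + 1$; this is immediate because a single grid column carries only the edge $(v^1, v_1)$, which may be deleted or painted in any of the $k$ colors, and every such configuration is automatically acyclic. Next I would invoke Corollary \ref{cor:a2} for $a_2 = 4k^3 + 6k^2 + 4k + 1$. Finally, for $n > 2$, Theorem \ref{thm:an} supplies the recurrence $a_n = (4k^2 + 3k + 1)a_{n-1} - (k^4 + 2k^3 + k^2)a_{n-2}$.

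The one point that genuinely requires care --- and where all the real work was done earlier --- is the recurrence itself. I would emphasise that it arises not from a single recursion but from the coupled pair in Lemmas \ref{lem:a:no} and \ref{lem:a:semi}, namely $a_n^\no = (2k + 1)a_{n-1} + k^2 a_{n-1}^\no$ and $a_n^\semi = k a_n^\no + k^2 a_{n-1}^\semi$. The step I expect to be the main obstacle is the elimination of the auxiliary quantity $a_n^\no$: one must combine $a_n = a_n^\no + a_n^\semi$ with these two relations, shift the index to express $a_n^\no$ and $a_{n-1}^\no$ through the $a$-sequence, and then cancel the residual $a^\no$ terms. Because this elimination lowers the order and introduces the $a_{n-2}$ term, the recurrence only becomes valid for $n > 2$; this is precisely why the statement must list $a_1$ and $a_2$ as explicit seeds rather than folding them into the recursion.

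As a consistency check I would specialise to $k = 1$, where the formula reduces to $a_1 = 2$, $a_2 = 15$, and $a_n = 8a_{n-1} - 4a_{n-2}$. This matches the recurrence \eqref{formula:GR} for $|\GR(\3_n)|$ and hence re-establishes Corollary \ref{cor:GR3n=F_grid}, confirming that the $k$-colored count is the promised generalisation and that no arithmetic slip has crept into Theorem \ref{thm:an}.
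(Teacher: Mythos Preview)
Your proposal is correct and matches the paper's approach exactly: the corollary is simply the concatenation of Remark~\ref{rem:a1}, Corollary~\ref{cor:a2}, and Theorem~\ref{thm:an}, and the paper likewise presents it without further argument beyond citing these three results. Your additional commentary on the elimination step and the $k=1$ consistency check is accurate and helpful, though it goes beyond what the paper records.
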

  
  In particular, for uncolored graphs, i.e. for $k=1$, we obtain \begin{corollary}\label{cor:first-equality}
  The sequence $|\F(n, 1)|$ is defined by 
  $$\begin{cases}
 	|\F(1, 1)| = 2; \\
  |\F(2, 1)| = 15; \\ 
  |\F(n, 1)| = 8|\F(n-1, 1)| - 4|\F(n-2, 1)| \text{ if } n > 2.
  \end{cases}$$
  That is, $|\F(n, 1)| = |\GR(\3_{n})|$ for any $n$.
  \end{corollary}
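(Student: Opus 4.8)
The plan is to treat this statement as a direct specialization of the general colored recurrence, followed by a comparison with the recurrence already obtained for $|\GR(\3_n)|$ in Section~\ref{sec:3n}. First I would set $k = 1$ in the three-part formula of the preceding corollary. The base cases collapse to $|\F(1,1)| = 1 + 1 = 2$ and $|\F(2,1)| = 4 + 6 + 4 + 1 = 15$, while the recurrence coefficients specialize via $4k^2 + 3k + 1 \mapsto 8$ and $k^4 + 2k^3 + k^2 \mapsto 4$. This yields $|\F(n,1)| = 8|\F(n-1,1)| - 4|\F(n-2,1)|$ for $n > 2$, reproducing the displayed system verbatim.

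For the identity $|\F(n,1)| = |\GR(\3_n)|$, I would observe that $|\GR(\3_n)|$ satisfies exactly the same data. Remark~\ref{rem:n=1} and Lemma~\ref{lem:n=2} supply the initial values $2$ and $15$, and Theorem~\ref{thm:3:matrices} provides the identical recurrence $|\GR(\3_n)| = 8|\GR(\3_{n-1})| - 4|\GR(\3_{n-2})|$ for $n > 2$; together these are precisely the system \eqref{formula:GR}. Since both sequences obey the same second-order linear recurrence with the same two seed values, a one-line induction on $n$ forces $|\F(n,1)| = |\GR(\3_n)|$ for every $n$.

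I expect no genuine obstacle here, as all the combinatorial content has already been absorbed into the general formula of the preceding corollary and into Theorem~\ref{thm:3:matrices}. The only point requiring care is the arithmetic of the specialization $k = 1$, which is immediate; once the two recurrences and their initial terms are seen to coincide, the equality follows formally by uniqueness of solutions to a linear recurrence with prescribed initial conditions.
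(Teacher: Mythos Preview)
Your proposal is correct and matches the paper's approach exactly: the paper presents this corollary with no separate proof, simply introducing it with ``In particular, for uncolored graphs, i.e.\ for $k=1$, we obtain,'' relying on the immediate specialization of the preceding general formula together with the recurrence \eqref{formula:GR} already established in Section~\ref{sec:3n}. Your added remark about uniqueness of solutions to a linear recurrence with given initial data makes explicit what the paper leaves implicit, but the argument is the same.
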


We conjecture that $|\F(n, 2)| = \Big|\GR\Bigl(\3_{n}(\{0, \pm 1\})\Bigr)\Big|$. That is, if we consider $(0, \pm1)$-matrices instead of $(0,1)$-matrices, then we get the same relation between matrices and graphs. So adding the third value corresponds to adding the second color. Moreover, we propose the following general hypothesis. 

\begin{hypothesis}
    Let $k \in \NN$ and $q \in \RR$. Then $$|\F(n, k)| = \Big|\GR\Bigl(\3_{n}(\{q, q+1, \ldots,q + k\})\Bigr)\Big|.$$
\end{hypothesis}

It is easy to see that it is sufficient to verify the hypothesis for $q = 0$. This was done with Python for small $n, k$: $n \leq 5$ and $k \leq 4$.

 \begin{remark*}
    It seems that proving the hypothesis could turn out to be easier rather than directly generalizing the proof of Theorem \ref{thm:3:matrices} for a larger entry set.
 \end{remark*}

\section{The numbers of subforests and degree-tuples of subgraphs}\label{sec:last}

In this section, we first investigate the set of bipartite graphs with $n$ vertices in both parts. Let $\B_n$ denote the set of all such graphs. For illustrative purposes, we refer to the two parts of the vertex set as the {\em top part} and the {\em bottom part}. We will denote the vertices of each part by $1, \ldots, n$. Thus we write $V(G) = (\{1, \ldots, n\}, \{1, \ldots, n\})$.

For every $G \in \B_n$ we consider its biadjacency matrix $A^G$. This is a $(0,1)$-matrix in which rows correspond to the vertices of the top part of $G$, while columns correspond to those of the bottom part. Naturally, $a^G_{i,j} = 1$ if and only if there is an edge between the $i$-th vertex of the top part and the $j$-th vertex of the bottom part. On the other hand, every square $(0,1)$-matrix of order $n$ can be viewed as a biadjacency matrix of some bipartite graph from $\B_n$. This provides a clear bijection between $M_{n}(0,1)$ and $\B_n$.

Moreover, the row sums of $A^G$ correspond to the degrees of the vertices of the top part, while the column sums correspond to the degrees of the bottom part. Therefore $\r^{A^G}$ and $\c^{A^G}$ are the tuples of degrees of the top part and the bottom part, respectively. In particular, this induces a bijection between $\GR(\3_n)$ and $\D(K^3_{n, n})$, where $K^3_{n, n} \in \B_n$ is the {\em complete tridiagonal graph}, see Definition \ref{def:complete_tridiagonal_graph}.

\begin{definition}\rm
    A matrix $A \in \3_n$ is the {\em complete tridiagonal matrix} if $$A =\left[\begin{matrix}
        1 & 1 & 0 & \ldots & \ldots & \ldots \\
        1 & 1 & 1 & 0 & \ldots & \ldots \\ 
        0 & 1 & 1 & 1 & 0 & \ldots & \\
        \ldots & \ldots & \ldots & \ldots  & \ldots & \ldots\\
        \ldots & \ldots & 0 & 1 & 1 & 1\\
        \ldots & \ldots & \ldots & 0 & 1 & 1
    \end{matrix}\right].$$ In other words, $a_{i,j} = 1$ if and only if $|i - j| \leq 1$.
\end{definition}

\begin{definition}\rm\label{def:complete_tridiagonal_graph}
    The {\em complete tridiagonal graph} $K^3_{n, n}$ is the graph from $\B_n$ with a complete tridiagonal biadjacency matrix.
\end{definition}

\begin{example}
    $K^3_{6, 6}$ is the following graph
\begin{center}
   	\begin{tikzpicture}[node distance={10mm}, nodes={draw, minimum size=20pt}, main/.style = {draw, circle}] 
   
   \scriptsize
    
\node[main] (1) [dotstyle] {$1$}; 
\node[main] (2) [dotstyle] [right of=1] {$2$};
\node[main] (3) [dotstyle] [right of=2] {$3$};
\node[main] (4) [dotstyle] [right of=3] {$4$};
\node[main] (5) [dotstyle] [right of=4] {$5$};
\node[main] (6) [dotstyle] [right of=5] {$6$};

\node[main] (11) [dotstyle] [below of=1]{$1$}; 
\node[main] (12) [dotstyle] [right of=11] {$2$};
\node[main] (13) [dotstyle] [right of=12] {$3$};
\node[main] (14) [dotstyle] [right of=13] {$4$};
\node[main] (15) [dotstyle] [right of=14] {$5$};
\node[main] (16) [dotstyle] [right of=15] {$6$};

\draw (1) -- (12);
\draw (2) -- (13);
\draw (3) -- (14);
\draw (4) -- (15);
\draw (5) -- (16);

\draw (11) -- (2);
\draw (12) -- (3);
\draw (13) -- (4);
\draw (14) -- (5);
\draw (15) -- (6);

\draw(1) -- (11);
\draw(2) -- (12);
\draw(3) -- (13);
\draw(4) -- (14);
\draw(5) -- (15);
\draw(6) -- (16);
\end{tikzpicture}

\end{center}
\end{example}

\begin{definition}\rm
    A graph from $\B_n$ is {\em tridiagonal} if its biadjacency matrix is tridiagonal; equivalently it is a subgraph of $K^3_{n, n}$.
\end{definition}

Note that there is a clear isomorphism between the complete $2\times n$ grid graph $Grid^2_n$ and the complete tridiagonal graph $K^3_{n, n}$:

\begin{multicols}{2}

\begin{center}
    
    \begin{tikzpicture}[node distance={10mm}, nodes={draw, minimum size=20pt}, main/.style = {draw, circle}] 
   
   \scriptsize
    
{\color{blue}\node[main] (1) [dotstyle] {$v^1$};
\node[main] (3) [dotstyle] [right of=2] {$v^3$};

\node[main] (12) [dotstyle] [right of=11] {$v_2$};
\node[main] (14) [dotstyle] [right of=13] {$v_4$};
}

\node[main] (5) [dotstyle] [right of=4] {$\cdots$};
\node[main] (16) [dotstyle] [right of=15] {$v_n$};
\node[main] (6) [dotstyle] [right of=5] {$v^n$};
\node[main] (15) [dotstyle] [right of=14] {$\cdots$};

{\color{red}\node[main] (2) [dotstyle] [right of=1] {$v^2$};
\node[main] (4) [dotstyle] [right of=3] {$v^4$};

\node[main] (11) [dotstyle] [below of=1]{$v_1$}; 
\node[main] (13) [dotstyle] [right of=12] {$v_3$};
}

\draw (1) -- (2);
\draw (2) -- (3);
\draw (3) -- (4);
\draw (4) -- (5);
\draw (5) -- (6);

\draw (11) -- (12);
\draw (12) -- (13);
\draw (13) -- (14);
\draw (14) -- (15);
\draw (15) -- (16);

\draw(1) -- (11);
\draw(2) -- (12);
\draw(3) -- (13);
\draw(4) -- (14);
\draw(5) -- (15);
\draw(6) -- (16);
\end{tikzpicture}
\end{center}

\begin{center}
    
    \begin{tikzpicture}[node distance={10mm}, nodes={draw, minimum size=20pt}, main/.style = {draw, circle}] 
   
   \scriptsize
    
{\color{blue}\node[main] (1) [dotstyle] {$1$};
\node[main] (2) [dotstyle] [right of=1] {$2$};
\node[main] (3) [dotstyle] [right of=2] {$3$};
\node[main] (4) [dotstyle] [right of=3] {$4$};
\node[main] (5) [dotstyle] [right of=4] {$\cdots$};
\node[main] (6) [dotstyle] [right of=5] {$n$};}

{\color{red} \node[main] (11) [dotstyle] [below of=1]{$1$}; 
\node[main] (12) [dotstyle] [right of=11] {$2$};
\node[main] (13) [dotstyle] [right of=12] {$3$};
\node[main] (14) [dotstyle] [right of=13] {$4$};
\node[main] (15) [dotstyle] [right of=14] {$\cdots$};
\node[main] (16) [dotstyle] [right of=15] {$n$};}

\draw (1) -- (12);
\draw (2) -- (13);
\draw (3) -- (14);
\draw (4) -- (15);
\draw (5) -- (16);

\draw (11) -- (2);
\draw (12) -- (3);
\draw (13) -- (4);
\draw (14) -- (5);
\draw (15) -- (6);

\draw(1) -- (11);
\draw(2) -- (12);
\draw(3) -- (13);
\draw(4) -- (14);
\draw(5) -- (15);
\draw(6) -- (16);
\end{tikzpicture}
\end{center}

\end{multicols}

That is, for every even $i$, the vertices $v_i$ and $v^i$ are swapped, i.e. \[\begin{pmatrix}
    \textcolor{blue}{v^{2i-1}}\\
    \textcolor{red}{v_{2i-1}}\\
    \textcolor{red}{v^{2i}}\\
    \textcolor{blue}{v_{2i}}
\end{pmatrix}\longleftrightarrow \begin{pmatrix}
    \textcolor{blue}{2i-1}\\
    \textcolor{red}{2i-1}\\
    \textcolor{red}{2i}\\
    \textcolor{blue}{2i}
\end{pmatrix}.\]

The following theorem shows that the number of subforests of a complete tridiagonal graph equals the number of distinct vertex-degree tuples realized by its subgraphs.
\begin{theorem}\label{thm:DK3=FK3}
    $|\D(K^3_{n, n})| = |\F(K^3_{n, n})|$.
\end{theorem}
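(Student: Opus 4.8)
The plan is to prove the theorem by assembling the three correspondences established earlier into a single chain of equalities, so that no new combinatorial work is required. I would read the statement as the concatenation
$$|\D(K^3_{n,n})| = |\GR(\3_n)| = |\F(Grid^2_n)| = |\F(K^3_{n,n})|,$$
and verify each link in turn.

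First I would establish the leftmost equality $|\D(K^3_{n,n})| = |\GR(\3_n)|$. Here I would make the bijection sketched just before the theorem fully explicit: a subgraph $H \subseteq K^3_{n,n}$ is determined by, and determines, its biadjacency matrix, which lies in $\3_n$ precisely because the only admissible edges of $K^3_{n,n}$ are those with $|i-j| \le 1$. Since the row sums of this matrix are the degrees of the top part and its column sums are the degrees of the bottom part, the ordered degree tuple $d(H)$ equals the pair $(\r^A, \c^A)$ of the corresponding $A \in \3_n$, under the convention that we list the top-part degrees before the bottom-part degrees. Consequently $\D(K^3_{n,n}) = \{(\r^A, \c^A) : A \in \3_n\} = \GR(\3_n)$ as sets of tuples, so in particular they have equal cardinality.

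Next I would invoke Corollary~\ref{cor:GR3n=F_grid} for the middle equality $|\GR(\3_n)| = |\F(Grid^2_n)|$, which is already proved (both sequences satisfy the recurrence \eqref{formula:GR} with the same initial values, i.e.\ OEIS~A022026). Finally, for the rightmost equality I would use the explicit graph isomorphism $Grid^2_n \cong K^3_{n,n}$ exhibited above, which swaps $v_i$ and $v^i$ for every even $i$. Since a graph isomorphism carries edges to edges bijectively, it sends subgraphs to subgraphs and cycles to cycles, hence restricts to a bijection between the subforests of the two graphs; therefore $|\F(Grid^2_n)| = |\F(K^3_{n,n})|$. Chaining the three equalities yields the claim.

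The proof carries essentially no obstacle, since all three ingredients are available from earlier sections; the only point demanding care is the bookkeeping in the first step, namely confirming that the ordering convention on the $2n$ vertex degrees matches the pairing $(\r, \c)$ of row and column sums, so that $\D(K^3_{n,n})$ and $\GR(\3_n)$ coincide as sets of tuples rather than merely as sets of equal size. Once that identification is pinned down, the result is immediate.
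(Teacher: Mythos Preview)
Your proposal is correct and follows essentially the same approach as the paper: both chain together the three equalities $|\D(K^3_{n,n})| = |\GR(\3_n)| = |\F(Grid^2_n)| = |\F(K^3_{n,n})|$, invoking the biadjacency-matrix bijection, Corollary~\ref{cor:GR3n=F_grid}, and the explicit isomorphism $Grid^2_n \cong K^3_{n,n}$ in turn. If anything, you spell out the first and last links in slightly more detail than the paper does.
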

\begin{proof} We know that $|\GR(\3_n)| = |\D(K^3_{n, n})|$.

    The isomorphism above provides $|\F(K^3_{n, n})| = |\F(Grid^2_n)|$.

    By Corollary \ref{cor:GR3n=F_grid}, $|\F(Grid^2_n)| = |\GR(\3_n)|$.

    Combining these three equalities, we obtain $$|\D(K^3_{n, n})| = |\GR(\3_n)| = |\F(Grid^2_n)| = |\F(K^3_{n, n})|.$$
\end{proof}

Now let's compare the numbers $|\F(G)|$ and $|\D(G)|$ for an arbitrary graph $G$. Recall that $G$ is called an $\mathsf{FED}$-graph if these numbers coincide (stands for `F equals D').

Theorem \ref{thm:DK3=FK3} proves that $K^3_{n, n}$ is an $\mathsf{FED}$-graph. The complete bipartite graph $K_{m, n}$ is also $\mathsf{FED}$:

\begin{proposition}
    The complete bipartite graph $K_{m, n}$ is an $\mathsf{FED}$-graph. That is, \[|\F(K_{m, n})| = |\D(K_{m, n})|.\]
\end{proposition}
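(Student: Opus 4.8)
The plan is to translate both quantities into matrix language and then to analyze the natural, but imperfect, correspondence between them. Identifying each subgraph of $K_{m,n}$ with its $m\times n$ biadjacency matrix, the set $\F(K_{m,n})$ becomes exactly the family of $(0,1)$-matrices whose support carries no bipartite cycle, while $\D(K_{m,n})$ is precisely the set of realizable row--column-sum pairs $(\r,\c)$ of $m\times n$ $(0,1)$-matrices (the rectangular analogue of $\GR(\3_n)$). Thus the proposition asserts that the number of acyclic such matrices equals the number of attainable margins, and I would prove it following Speyer~\cite{Speyer}.

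The obvious candidate is the degree map $d\colon\F(K_{m,n})\to\D(K_{m,n})$ sending a forest to its margin $(\r,\c)$. However, already for $K_{2,2}$ this map is neither injective --- the two perfect matchings share the margin $(\r,\c)=\bigl((1,1),(1,1)\bigr)$ --- nor surjective --- the margin $\bigl((2,2),(2,2)\bigr)$ is realized only by the $4$-cycle, which is not a forest. The crux is that these two failures cancel exactly. Writing $\phi(\r,\c)$ for the number of forests realizing $(\r,\c)$ (so $\phi=0$ for margins with no acyclic realization), summing over all realizable margins gives
\[|\F(K_{m,n})|-|\D(K_{m,n})|=\sum_{(\r,\c)\in\D(K_{m,n})}\bigl(\phi(\r,\c)-1\bigr),\]
since $\sum\phi=|\F(K_{m,n})|$ and $\sum 1=|\D(K_{m,n})|$. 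Hence the statement that $K_{m,n}$ is $\mathsf{FED}$ is equivalent to the balance that the total surplus $\sum_{\phi\ge 2}\bigl(\phi(\r,\c)-1\bigr)$ equals the number of margins realized by no forest at all.

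To establish this balance I would exploit the cycle structure of $K_{m,n}$: two forests share a margin if and only if their symmetric difference has even degree everywhere, hence decomposes into edge-disjoint even cycles alternating between the two forests, whereas a margin fails to be forest-realizable precisely when every realization retains such a cycle. Following Speyer, the aim is to turn this shared cycle phenomenon into an explicit involution (or a matching of generating functions, or an induction on $m+n$ that tracks both counts at once, in the spirit of Section~\ref{sec:3n}) pairing each surplus forest with a unique cycle-carrying margin. The main obstacle is exactly this pairing step: unlike the tridiagonal case, where each newly added vertex admits only $2^3$ extensions and all collisions are enumerated by hand in Theorem~\ref{thm:3:matrices}, a vertex adjoined to $K_{m,n}$ has unbounded degree, so the colliding configurations form a much richer family and demand Speyer's structural bijection rather than a finite case analysis.
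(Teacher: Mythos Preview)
Your proposal and the paper's proof are essentially identical in substance: both translate to the matrix setting and then defer to Speyer's result, with the paper doing so in two sentences while you add (correct) exposition about the degree map, the balance identity $\sum_{(\r,\c)}(\phi(\r,\c)-1)=0$, and the symmetric-difference/cycle structure underlying it. Neither you nor the paper supplies the actual bijection; you explicitly flag ``this pairing step'' as the obstacle and invoke Speyer, which is exactly what the paper does.

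One caution: your final paragraph hedges among three possible mechanisms (involution, generating functions, induction on $m+n$), which reads as a plan rather than a proof. Since the paper's own argument is purely a citation, this is not a defect relative to the paper, but if a self-contained proof were expected you would still owe the construction of the pairing.
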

\begin{proof}
    This was proved by Speyer; see \cite{Speyer}.
    Note that the original proof deals with matrices. The aforementioned bijection between $M_{n}(0,1)$ and $\B_n$, naturally extended to $m\times n$ matrices and subgraphs of $K_{m, n}$, completes the proof.
\end{proof}

\smallskip
The following theorem provides an important property of two subgraphs with the same vertex-degree tuples. Recall that an Euler graph is a graph with an Euler circuit.

\begin{theorem}\label{thm:sym:euler}
    Let $H_1$, $H_2$ be two distinct subgraphs of a graph $G$. Define a subgraph $H$ of $G$ by \[
E(H) = E(H_1) \,\triangle\, E(H_2) = (E(H_1) \setminus E(H_2)) \cup (E(H_2) \setminus E(H_1)).
\]
The following statements are equivalent:
\begin{enumerate}
    \item $d(H_1) = d(H_2)$;
    \item Every connected component of $H$ is an Euler graph. Moreover, in every component we can choose an Euler circuit in which edges from $E(H_1) \setminus E(H_2)$ and edges from $E(H_2) \setminus E(H_1)$ alternate. In particular, this circuit is of even length.
\end{enumerate}
\end{theorem}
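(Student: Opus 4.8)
The two conditions can both be recast as a single local balance, and I would exploit this symmetry throughout. Set $A = E(H_1)\setminus E(H_2)$ and $B = E(H_2)\setminus E(H_1)$, so that $E(H) = A \sqcup B$ carries a canonical $2$-coloring, and write $d_A(v)$, $d_B(v)$ for the numbers of $A$- and $B$-edges incident to $v$. The edges lying in $E(H_1)\cap E(H_2)$ contribute equally to $d_{H_1}(v)$ and $d_{H_2}(v)$, so $d_{H_1}(v) - d_{H_2}(v) = d_A(v) - d_B(v)$, whereas $d_H(v) = d_A(v) + d_B(v)$. Hence statement~(1) is equivalent to the balance condition $d_A(v) = d_B(v)$ at every vertex $v$, and under this condition every $d_H(v) = 2d_A(v)$ is even.

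For the implication $(2) \Rightarrow (1)$ --- the easy direction --- I would fix an alternating Euler circuit of a component $C$ of $H$ and count locally. Each passage of the circuit through a vertex $v$ enters along one edge and leaves along another; by alternation these two edges receive different colors, so each passage consumes exactly one $A$-end and one $B$-end at $v$. Since an Euler circuit uses every edge-end at $v$ exactly once, the number of $A$-ends equals the number of passages, which equals the number of $B$-ends, giving $d_A(v) = d_B(v)$. Ranging over all vertices yields the balance condition, i.e. (1). The parenthetical even-length claim is then immediate, as an alternating closed trail has equally many $A$- and $B$-edges.

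The substance lies in $(1) \Rightarrow (2)$. Fix a component $C$; by the reduction above all its degrees are even, so $C$ is an Euler graph, and it remains only to upgrade an Euler circuit to a color-alternating one. I would argue through transition systems. At each vertex $v$ choose an arbitrary perfect matching between its $d_A(v)$ ends of $A$-edges and its equally many ends of $B$-edges. Treating each matched pair as the only permitted turn at $v$, these transitions decompose the edges of $C$ into closed trails, and because every transition joins an $A$-end to a $B$-end, each trail alternates colors (and hence has even length). If this decomposition is a single trail, it is the desired circuit. Otherwise, connectivity of $C$ forces two distinct trails to meet at some vertex $v$, where each contributes one $A$-end--$B$-end transition; swapping the two transitions (pairing the $A$-end of one with the $B$-end of the other) keeps every transition color-alternating and fuses the two trails into one. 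Repeating this merge strictly lowers the number of trails, so the process terminates in a single alternating Euler circuit of $C$.

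I expect the genuine obstacle to be exactly this last construction --- producing a color-alternating Euler circuit rather than an arbitrary one. The two points needing care are verifying that the transition-swap fuses two \emph{distinct} trails (rather than splitting one), which is clean provided the swap is only ever applied across different trails, and organizing the iteration so that termination is manifest. This is in effect the classical characterization of connected $2$-edge-colored graphs admitting an alternating Eulerian circuit as precisely the color-balanced ones; I would either invoke that result directly or include the self-contained merging argument sketched above.
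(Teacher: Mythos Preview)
Your proposal is correct and, at the structural level, matches the paper's proof exactly: both reduce statement~(1) to the local color-balance $d_A(v)=d_B(v)$, derive $(2)\Rightarrow(1)$ by counting color-ends at each vertex along the alternating circuit, and identify $(1)\Rightarrow(2)$ as precisely Kotzig's characterization of $2$-edge-colored graphs admitting an alternating Eulerian circuit. The only difference is presentational: the paper simply invokes Kotzig's theorem (citing \cite[Theorem~7.1]{BangJensen}) as a black box, whereas you supply a self-contained transition-system argument (local $A$--$B$ matchings, decomposition into alternating closed trails, and iterated trail-merging at a shared vertex). Your version buys independence from the external reference at the cost of a paragraph; the paper's version is shorter but relies on the citation. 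Either is fine, and you correctly flagged that one could go either way.
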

\begin{proof}
    Since $H_1$ and $H_2$ are distinct, the edge set \(E(H)\) is non-empty. Let us call the edges in $E(H_1) \setminus E(H_2)$ red and the edges in $E(H_2) \setminus E(H_1)$ blue.

    First, assume that $d(H_1) = d(H_2)$.     Since $d_{H_1}(v) = d_{H_2}(v)$ for any $v \in V(G)$, we conclude that every vertex $v$ is incident to the same number of red and blue edges. In particular, the degree of each vertex in \(H\) is even. This is exactly the criterion for existence of a color-alternating Euler circuit in every connected component (Kotzig's theorem, \cite[Theorem 7.1]{BangJensen}).

    Now assume that every connected component of $H$ forms an alternating Euler circuit. Then for every $v \in V(G)$ the number of red and blue edges incident with $v$ in $H$ is the same. Thus $d_{H_1}(v) - d_{H_2}(v) = 0$ and therefore $d(H_1) = d(H_2)$.
\end{proof}
As a consequence, we can now easily obtain the following result for acyclic graphs.
\begin{proposition}\label{prop:trees}
    Every acyclic graph is an $\mathsf{FED}$-graph.
\end{proposition}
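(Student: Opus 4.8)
The plan is to show that for an acyclic $G$ both quantities equal $2^{|E(G)|}$, so that the two counts trivially coincide. The easy half is the count of $|\F(G)|$: since $G$ is acyclic, every subgraph of $G$ is also acyclic (deleting edges cannot create a cycle). Hence $\F(G)$ is precisely the set of \emph{all} subgraphs of $G$, and therefore $|\F(G)| = 2^{|E(G)|}$, as each of the $|E(G)|$ edges is independently present or absent.

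The substance of the argument is to prove that the degree map $H \mapsto d(H)$ is injective on the subgraphs of $G$, which would give $|\D(G)| = 2^{|E(G)|}$ as well. I would argue by contradiction using Theorem~\ref{thm:sym:euler}. Suppose $H_1 \neq H_2$ are distinct subgraphs of $G$ with $d(H_1) = d(H_2)$, and set $H = H_1 \,\triangle\, H_2$. By the theorem, $E(H)$ is non-empty and every connected component of $H$ admits an Euler circuit of even length.

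The key step—and the only place where any care is needed—is to observe that $H$ must then contain a cycle. Indeed, $H$ has at least one edge, so it has a component $C$ with a non-trivial Euler circuit; every vertex of $C$ has positive even degree, hence degree at least $2$, and a graph with minimum degree at least $2$ contains a cycle. But $H$ is a subgraph of $G$ (its edge set is a subset of $E(G)$), and $G$ is acyclic, so $H$ is acyclic as well—a contradiction. This forces $H_1 = H_2$, establishing injectivity and hence $|\D(G)| = 2^{|E(G)|} = |\F(G)|$. I expect no serious obstacle here: the entire proof is a short application of Theorem~\ref{thm:sym:euler}, with the mild technical point being the passage from ``Eulerian component with an edge'' to ``contains a cycle,'' which contradicts acyclicity.
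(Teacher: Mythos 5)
Your proof is correct and follows essentially the same route as the paper: both observe that every subgraph of an acyclic graph is acyclic (so $\F(G)$ is all $2^{|E(G)|}$ subgraphs) and then use Theorem~\ref{thm:sym:euler} on the symmetric difference to show the degree map is injective. Your explicit justification that an Eulerian component with an edge contains a cycle (via minimum degree at least $2$) is a small step the paper leaves implicit, but the argument is the same.
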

\begin{proof}
Every subgraph of an acyclic graph is also acyclic. Therefore, it suffices to show that all subgraphs of an acyclic graph have distinct vertex-degree tuples.

Assume the contrary: suppose there exist two distinct subgraphs \( H_1 \) and \( H_2 \) of an acyclic graph \( G \) such that \( d(H_1) = d(H_2) \). Consider another subgraph \( H \subseteq G \) defined by $E(H) = E(H_1) \,\triangle\, E(H_2)$. Then the edge set \( E(H) \) is non-empty, and $H$ contains an Euler circuit by Theorem \ref{thm:sym:euler}, which contradicts the acyclicity of \( G \).
\end{proof}

\begin{remark}
    Acyclic graphs are bipartite.
\end{remark}

Based on the results above, we conjecture
\begin{boxedhypothesis}\label{main_hypo}
    Every bipartite graph is an $\mathsf{FED}$-graph.
\end{boxedhypothesis}

    We verified this with Python for all bipartite graphs with at most $10$ vertices in total.

\medskip

Further, we conjecture that Hypothesis \ref{main_hypo} is actually a characterization of bipartite graphs. Recall that similarly to $\mathsf{FED}$-graphs, a graph $G$ is called an $\mathsf{FLD}$-graph (`F less than D'), if $|\F(G)| < |\D(G)|$.
\begin{boxedhypothesis}\label{<hypo}
    All bipartite graphs are $\mathsf{FED}$-graphs. All non-bipartite graphs are $\mathsf{FLD}$-graphs.
    In other words, $|\F(G)| \leq |\D(G)|$ for any graph $G$ and $G$ is bipartite if and only if $|\F(G)| = |\D(G)|$.
\end{boxedhypothesis}
We verified Hypothesis \ref{<hypo} with Python for all graphs with at most $6$ vertices.

\medskip

Hypothesis \ref{<hypo} is true for trees, as follows from Proposition \ref{prop:trees}. Next we show that it is also true for cycles. Note that a cycle is bipartite if and only if its length is even.

\begin{proposition}\label{prop:cycles}
    Let $C$ be a cycle on $n$ vertices.
    \begin{gather*}
    \text{If  $n$ is even, then } |\F(C)| = |\D(C)| = 2^n - 1;\\
    \text{If  $n$ is odd, then } |\F(C)| = 2^{n} - 1 < |\D(C)| = 2^n.
    \end{gather*}
\end{proposition}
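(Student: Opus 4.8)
The plan is to compute $|\F(C)|$ and $|\D(C)|$ separately and then compare them. For the forest count, observe that the only circuit in $C$ is the full cycle itself, so a spanning subgraph is acyclic precisely when it omits at least one of the $n$ edges. Since there are $2^n$ spanning subgraphs in total, exactly one of which (the whole cycle) contains a cycle, this immediately yields $|\F(C)| = 2^n - 1$, independently of the parity of $n$.

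The substantive part is counting $|\D(C)|$, for which I would invoke Theorem \ref{thm:sym:euler}. Index the edges $e_1, \dots, e_n$ cyclically. Two spanning subgraphs $H_1, H_2$ satisfy $d(H_1) = d(H_2)$ if and only if every component of $H_1 \triangle H_2$ is an Euler graph admitting an alternating Euler circuit. Because every vertex of $C$ has degree at most $2$, any subgraph all of whose degrees are even must be either empty or the full cycle. Hence $d(H_1) = d(H_2)$ forces $E(H_1) \triangle E(H_2)$ to be empty or all of $E(C)$; equivalently, $H_2 = H_1$ or $H_2 = \overline{H_1}$, the edge-complement of $H_1$.

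It then remains to decide when a subgraph $H$ and its complement $\overline{H}$ share a degree tuple. By Theorem \ref{thm:sym:euler}, this happens iff the full cycle $E(H) \triangle E(\overline{H}) = E(C)$ carries an Euler circuit alternating between the edges of $H$ (coloured red) and those of $\overline{H}$ (coloured blue). Traversing the cycle, consecutive edges must then receive opposite colours, which is possible only when $n$ is even and $H$ consists of every other edge, i.e. $H \in \{ \{e_1, e_3, \dots, e_{n-1}\},\ \{e_2, e_4, \dots, e_n\} \}$. Both of these realise the all-ones degree tuple, and they form the unique colliding pair; moreover no degree tuple is attained by three or more subgraphs, again by the dichotomy of the previous paragraph.

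Putting the two counts together finishes the argument. For $n$ odd no collision occurs, so all $2^n$ subgraphs have pairwise-distinct degree tuples and $|\D(C)| = 2^n > 2^n - 1 = |\F(C)|$. For $n$ even exactly two subgraphs collapse onto a single tuple, giving $|\D(C)| = 2^n - 1 = |\F(C)|$. I expect the main obstacle to be the third step: pinning down precisely that the alternating-circuit condition isolates a single colliding pair when $n$ is even and rules out all collisions when $n$ is odd. The remaining steps are straightforward bookkeeping once this parity argument is in place.
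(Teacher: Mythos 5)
Your proposal is correct, but it reaches $|\D(C)|$ by a different route than the paper. The paper's proof is direct and self-contained: it observes that a subgraph $G \subseteq C$ is determined by $d(G)$ together with the single bit of whether a fixed edge $e_i$ lies in $G$, and that this bit is forced whenever some $d_i \in \{0,2\}$; hence only the all-ones tuple can collide, and it is realized exactly twice when $n$ is even. You instead invoke Theorem~\ref{thm:sym:euler}: a collision forces the symmetric difference to have all even degrees, which in a cycle means it is empty or all of $E(C)$, so colliding subgraphs must be edge-complements of one another, and the alternating-circuit condition then shows a complementary pair collides precisely when $n$ is even and the pair is the two perfect matchings. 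Your argument is sound (the propagation step showing that a nonempty even-degree subgraph of $C$ must be all of $C$, and the observation that complementarity rules out triple collisions, are both correct) and it buys a cleaner structural picture --- colliding subgraphs come in complementary pairs --- at the cost of leaning on the Euler-circuit theorem, whereas the paper's bookkeeping argument is more elementary and does not need that machinery.
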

\begin{proof}
    Every subgraph of a cycle is acyclic except for the cycle itself. Therefore $|\F(C)| = 2^n - 1$.

     Now let us find $|\D(C)|$. Denote the edges of $C$ by $e_1 \coloneqq (1, 2), \ldots, e_{n-1} \coloneqq (n-1, n), e_n\coloneqq (n, 1)$. For a fixed edge $e_i$, a subgraph $G$ of $C$ is uniquely determined by $d(G)$ together with the information whether $e_i \in E(G)$. It follows that if $d \in \D(C)$ and $d_i \in \{0, 2\}$ for some $i$, then there is a unique subgraph $G$ of $C$ with $d=d(G)$. This is because $\begin{cases}
         e_i \notin E(G), \text{ if } d_i=0;\\
         e_i \in E(G), \text{ if } d_i=2.
     \end{cases}$

     It remains to consider the case $d=(1, \ldots, 1)$. This can only happen if $n$ is even, since $\sum\limits_{i=1}^n d_i$ is always even. Now, for an even $n$, there are exactly two subgraphs $G_1, G_2$ of $C$ with the vertex-degree tuple $(1, \ldots, 1)$. They are given by $E(G_1) = \{e_1, e_3, \ldots, e_{n-1}\}$ and $E(G_2) = \{e_2, e_4, \ldots, e_{n}\}$.

     Finally, if $n$ is odd, then all subgraphs of $C$ have different vertex-degree tuples. In this case $|\D(C)| = 2^n$. If $n$ is even, then there are exactly two subgraphs $G_1$ and $G_2$ that have the same vertex degrees. Thus $|\D(C)| = 2^n - 1$.
\end{proof}

\bigskip

Proposition \ref{prop:cycles} illustrates the intuition behind Hypotheses \ref{main_hypo} and \ref{<hypo} and shows what differentiates bipartite graphs. Bipartite graphs have only even cycles, while an odd cycle makes the inequality in Hypothesis \ref{<hypo} strict.

\subsection{Factorizations}\label{subseq:factorizations}

In this section, we provide several reduction techniques for graphs with bridges and articulation vertices.

\begin{proposition}\label{prop:articulation:2:F}
    Let graphs $G_1$ and $G_2$ satisfy 
    \[
      V(G_1)\cap V(G_2)=\{v\}, \qquad E(G_1)\cap E(G_2)=\varnothing .
  \] 
  Then
  \[
      |\F(G_1\cup G_2)| \;=\; |\F(G_1)| \, |\F(G_2)|.
  \]
\end{proposition}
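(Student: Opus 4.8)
The plan is to exhibit an explicit bijection between $\F(G_1\cup G_2)$ and the Cartesian product $\F(G_1)\times\F(G_2)$. Write $G = G_1\cup G_2$. Since $E(G_1)\cap E(G_2)=\varnothing$, the edge set of any subgraph $H\subseteq G$ splits canonically: setting $H_1 := H\cap G_1$ and $H_2 := H\cap G_2$ in the sense of Section~\ref{sec:notation} (so that $V(H_i)=V(G_i)$ and $E(H_i)=E(H)\cap E(G_i)$), one has $E(H)=E(H_1)\cup E(H_2)$ with the two parts disjoint. Conversely, any pair $H_1\subseteq G_1$, $H_2\subseteq G_2$ reassembles into a subgraph $H_1\cup H_2\subseteq G$. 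Because the edge sets are disjoint, the assignment $H\mapsto (H_1,H_2)$ is a bijection between \emph{all} subgraphs of $G$ and all such pairs. It remains to show that this bijection carries $\F(G)$ exactly onto $\F(G_1)\times\F(G_2)$.

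The crux is the following cycle-localization claim: \emph{every cycle $C$ of $H$ lies entirely within $G_1$ or entirely within $G_2$.} To prove it, color each edge of $C$ with $1$ if it belongs to $E(G_1)$ and $2$ if it belongs to $E(G_2)$, and read $C$ as a cyclic sequence of edges. Two consecutive edges share a vertex $w$; if $w\neq v$, then $w$ lies in exactly one of the disjoint sets $V(G_1)\setminus\{v\}$ or $V(G_2)\setminus\{v\}$, and since an edge of $G_j$ meets only vertices of $V(G_j)$, both edges incident to $w$ must carry the same color. Hence a color change between consecutive edges can occur only at the shared vertex $v$. But $C$, having no repeated vertices, visits $v$ at most once, so at most one of the cyclic adjacencies is a potential change-point; as the number of color changes around any cyclic binary sequence is even, there are in fact zero. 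Thus $C$ is monochromatic, i.e. contained in a single $G_i$.

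With the claim in hand, the characterization is immediate: any cycle of $H_1$ or of $H_2$ is a cycle of $H$, and conversely every cycle of $H$ is a cycle of one of the $H_i$; therefore $H$ is acyclic if and only if both $H_1$ and $H_2$ are acyclic. Consequently $H\in\F(G)$ precisely when $(H_1,H_2)\in\F(G_1)\times\F(G_2)$, so the bijection above restricts to a bijection $\F(G_1\cup G_2)\to\F(G_1)\times\F(G_2)$, giving $|\F(G_1\cup G_2)|=|\F(G_1)|\,|\F(G_2)|$. The only nonroutine step is the cycle-localization claim; once the parity-of-color-changes argument pins down that no cycle can straddle the cut vertex $v$, the remainder is pure bookkeeping on disjoint edge sets, so I expect that claim to be the main obstacle.
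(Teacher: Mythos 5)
Your proof is correct and follows essentially the same route as the paper's: decompose each subgraph $H$ into $H_1 = H\cap G_1$ and $H_2 = H\cap G_2$, and observe that $H$ is acyclic if and only if both $H_i$ are, because no cycle can straddle the cut vertex $v$. The only difference is that you justify the cycle-localization step with an explicit parity-of-color-changes argument, whereas the paper simply notes that a cycle cannot pass through $v$ more than once; your version is a more careful writeup of the same idea.
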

\begin{proof}
    Consider a subgraph $H \subseteq G_1 \cup G_2$. Denote $H_1 = G_1 \cap H$ and $H_2 = G_2 \cap H$. Then $H = H_1 \cup H_2$. If either $H_1$ or $H_2$ is not acyclic, then $H$ is also not acyclic. On the other hand, every cycle in $H$ completely lies in $H_1$ or $H_2$, since a cycle cannot pass $v$ more than twice.

    Thus $H$ is acyclic if and only if $H_1, H_2$ are acyclic. It follows that $|\F(G_1\cup G_2)| \;=\; |\F(G_1)| \, |\F(G_2)|$.
\end{proof}

It turns out we cannot as easily perform the same factorization for $|\D(G)|$. We require an additional graph property.

\begin{definition}\rm

Let $G$ be a graph. A vertex $v \in V(G)$ is called {\em degree-determinable} if for any $d_1, d_2 \in \D(G)$ we have
\[d_1(w) = d_2(w) \text{ for all } w \neq v \text{ implies } d_1(v) = d_2(v).\]
\end{definition}

\begin{lemma}\label{lem:determine:bipartite}
    In a bipartite graph every vertex is degree-determinable.
\end{lemma}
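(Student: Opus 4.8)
The plan is to exploit the bipartite structure directly via a parity argument about the edges of a subgraph incident to the distinguished vertex $v$. Fix a bipartition $V(G) = P \sqcup Q$ of the bipartite graph $G$, and suppose $v \in P$. The key observation is that for any subgraph $H \subseteq G$, the number of edges of $H$ equals the sum of the degrees on the $Q$-side, $|E(H)| = \sum_{w \in Q} d_H(w)$, and equally the sum of degrees on the $P$-side, $|E(H)| = \sum_{u \in P} d_H(u)$. Therefore $\sum_{u \in P} d_H(u) = \sum_{w \in Q} d_H(w)$, which I can rearrange to isolate $d_H(v)$:
\[
d_H(v) = \sum_{w \in Q} d_H(w) - \sum_{u \in P \setminus \{v\}} d_H(u).
\]

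The right-hand side depends only on the degrees $d_H(w)$ for $w \neq v$. Hence if two degree tuples $d_1, d_2 \in \D(G)$ agree at every vertex $w \neq v$, the displayed identity forces $d_1(v) = d_2(v)$ as well. This is precisely the definition of $v$ being degree-determinable, and since $v \in P$ was arbitrary (and by symmetry the same argument works for any vertex in $Q$, swapping the roles of the two sides), every vertex of $G$ is degree-determinable.

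I expect there to be essentially no obstacle here: the whole argument rests on the elementary bipartite double-counting identity that each edge contributes exactly once to the $P$-side degree sum and once to the $Q$-side degree sum. The only point requiring a moment's care is that I must argue directly about an arbitrary realizing subgraph $H$ (so that the degree tuple is genuinely realized by a concrete subgraph), rather than about abstract tuples in $\D(G)$; but since every $d \in \D(G)$ is by definition $d(H)$ for some $H \subseteq G$, and the identity holds for that $H$, this causes no difficulty. Thus the proof is short: fix the bipartition, write down the two expressions for $|E(H)|$, solve for $d_H(v)$, and conclude that the value at $v$ is determined by the values elsewhere.
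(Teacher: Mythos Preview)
Your proof is correct and takes essentially the same approach as the paper: both use the bipartite double-counting identity $\sum_{u \in P} d_H(u) = \sum_{w \in Q} d_H(w)$ to solve for the missing degree. Your version is a bit more explicit about the rearrangement and the passage from realizing subgraphs to tuples in $\D(G)$, but the argument is identical in substance.
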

\begin{proof}
    Denote the two parts of a bipartite graph $G$ by $\{v_1, \ldots, v_n\}$ and $\{w_1, \ldots, w_m\}$. Since every subgraph of $G$ is also bipartite, $\sum\limits_{i=1}^{n} d(v_i) = \sum\limits_{j=1}^{m}d(w_j)$ for any $d \in \D(G)$. This equality allows us to determine the degree of a vertex, knowing the degrees of the rest of the vertices.
\end{proof}

\begin{remark}
    Lemma \ref{lem:determine:bipartite} doesn't hold for non-bipartite graphs. If $G$ is not bipartite, then there always exist $d_1, d_2 \in \D(G)$ that coincide on all but one vertex. That is, \[\left|\{w \in V(G) : d_1(w) = d_2(w)\}\right| = |V(G)| - 1.\]

    In a non-bipartite graph $G$ there is always a cycle $C = (v_1, v_2, \ldots, v_k, v_1)$ of odd length. Define two subgraphs $G_1, G_2 \subseteq G$ by \begin{gather*}
        E(G_1) = \{(v_1, v_2), (v_3, v_4), \ldots, (v_{k-2}, v_{k-1}), (v_k, v_1)\};\\
        E(G_2) = \{(v_2, v_3), (v_4, v_5), \ldots, (v_{k-1}, v_k)\}.
    \end{gather*}
    Then $d_{G_1}(w) = d_{G_2}(w)=1$ for any $w \in V(G) \setminus\{v_1\}$. But $d_{G_1}(v_1) = 2$, while $d_{G_2}(v_1) = 0$. See an illustration for $k=7$:
    \medskip
    
     \begin{tikzpicture}[scale=0.6,
        every node/.style={circle,draw,fill=white,inner sep=1.8pt},]   % adjust “scale” to enlarge/shrink

  %----------------- vertices -----------------
  \node (v1) at (1,0) {$v_1$};
  \node (v2) at (4,1) {$v_2$};
  \node (v3) at (7,1) {$v_3$};
  \node (v4) at (10,1) {$v_4$};
  \node (v5) at (10,-1) {$v_5$};
  \node (v6) at (7,-1) {$v_6$};
  \node (v7) at (4,-1) {$v_7$};

  \node (w1) at (15,0) {$v_1$};
  \node (w2) at (18,1) {$v_2$};
  \node (w3) at (21,1) {$v_3$};
  \node (w4) at (24,1) {$v_4$};
  \node (w5) at (24,-1) {$v_5$};
  \node (w6) at (21,-1) {$v_6$};
  \node (w7) at (18,-1) {$v_7$};

  %----------------- edges -----------------------
  \draw[blue, very thick] (v1)--(v2);
  \draw[blue, very thick]  (w2)--(w3);
  \draw[blue, very thick] (v3)--(v4);
  \draw[blue, very thick]  (w4)--(w5);
  \draw[blue, very thick] (v5)--(v6);
  \draw[blue, very thick]  (w6)--(w7);
  \draw[blue, very thick] (v7)--(v1);
  
  \draw[thick, dotted] (w1)--(w2);
  \draw[thick, dotted]  (v2)--(v3);
  \draw[thick, dotted] (w3)--(w4);
  \draw[thick, dotted]  (v4)--(v5);
  \draw[thick, dotted] (w5)--(w6);
  \draw[thick, dotted]  (v6)--(v7);
  \draw[thick, dotted] (w7)--(w1);

\node[draw=none,fill=none] at (6, -2) {$G_1$};
\node[draw=none,fill=none] at (20, -2) {$G_2$};
\node[draw=none,fill=none] at (0, -2) {$ $};
\end{tikzpicture}
\end{remark}

However, a non-bipartite graph may still contain a degree-determinable vertex. For example, a leaf is always degree-determinable:

\begin{lemma}\label{lem:determine:leaf}
    Let $G$ be a graph and let $v \in V(G)$ with $d_G(v) = 1$. Then $v$ is degree-determinable.
\end{lemma}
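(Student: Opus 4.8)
The plan is to reduce the statement to a single parity observation combined with the fact that a leaf can only ever carry degree $0$ or $1$. Write $e = (v, u)$ for the unique edge of $G$ incident to the leaf $v$. Then for any subgraph $H \subseteq G$ we have $d_H(v) \in \{0, 1\}$, with $d_H(v) = 1$ exactly when $e \in E(H)$; in particular every $d \in \D(G)$ satisfies $d(v) \in \{0, 1\}$.

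The key step I would use is the handshake lemma: for any $H \subseteq G$ the degree sum $\sum_{w \in V(G)} d_H(w) = 2|E(H)|$ is even. Splitting off the contribution of $v$, this gives, for any $d \in \D(G)$,
\[
    d(v) \;\equiv\; \sum_{w \neq v} d(w) \pmod 2,
\]
so that the parity of $d(v)$ is completely determined by the remaining entries of $d$.

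Now, given $d_1, d_2 \in \D(G)$ with $d_1(w) = d_2(w)$ for all $w \neq v$, the displayed congruence immediately forces $d_1(v) \equiv d_2(v) \pmod 2$. Since both values lie in $\{0, 1\}$ (as $v$ is a leaf), being congruent modulo $2$ means they are in fact equal, i.e. $d_1(v) = d_2(v)$. This is precisely the degree-determinability of $v$.

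I do not anticipate a genuine obstacle here: the argument is a short parity computation, and the only subtlety is to make explicit why the leaf hypothesis is indispensable. For a vertex of higher degree the two candidate values $d_1(v), d_2(v)$ could differ by an even amount while still respecting the global parity constraint; the remark preceding this lemma exhibits exactly such a failure along an odd cycle, where the distinguished vertex attains degrees $2$ and $0$. The same conclusion can alternatively be obtained via the red/blue symmetric-difference coloring of Theorem~\ref{thm:sym:euler}, by noting that the hypothesis makes every vertex other than $v$ acquire even degree in $H_1 \triangle H_2$ and then pushing the residual parity onto $v$; but the direct handshake argument is cleaner and self-contained.
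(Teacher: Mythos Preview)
Your proof is correct and follows essentially the same approach as the paper: both use the handshake lemma to pin down the parity of $d(v)$ from the remaining degrees, and then observe that $d(v)\in\{0,1\}$ makes parity determine the value uniquely. The additional commentary on why the leaf hypothesis matters and the alternative route via Theorem~\ref{thm:sym:euler} is nice context but not needed for the argument.
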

\begin{proof}
     Consider an arbitrary subgraph $H \subseteq G$. We need to determine $d_H(v)$ by the degrees of the other vertices. Observe that $\sum\limits_{w \in V(G)} d_H(w)$ must be even. Therefore, we can determine the parity of $d_H(v)$. But since $d_H(v) \in \{0, 1\}$, we can uniquely determine $d_H(v)$.
\end{proof}

For $|\D(G)|$ a factorization analogous to Proposition \ref{prop:articulation:2:F} is possible, if $v$ is degree-determinable in $G_1$ or in $G_2$.
\begin{proposition}\label{prop:articulation:2:D}
    Let graphs $G_1$ and $G_2$ satisfy 
    \[
      V(G_1)\cap V(G_2)=\{v\}, \qquad E(G_1)\cap E(G_2)=\varnothing.
  \] 
  If $v$ is degree-determinable in $G_1$, then
  \[
      |\D(G_1\cup G_2)| \;=\; |\D(G_1)| \, |\D(G_2)|.
  \]
\end{proposition}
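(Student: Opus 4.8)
The plan is to construct an explicit bijection
\[
\Phi : \D(G_1) \times \D(G_2) \longrightarrow \D(G_1\cup G_2).
\]
First I would record how degrees decompose across the articulation vertex. Any subgraph $H \subseteq G_1 \cup G_2$ splits as $H = H_1 \cup H_2$ with $H_1 = H \cap G_1$ and $H_2 = H \cap G_2$, and since $E(G_1)\cap E(G_2)=\varnothing$ every edge of $H$ belongs to exactly one of $H_1, H_2$. Consequently, for $w \in V(G_1)\setminus\{v\}$ all edges incident to $w$ lie in $G_1$, so $d_H(w) = d_{H_1}(w)$; symmetrically $d_H(w) = d_{H_2}(w)$ for $w \in V(G_2)\setminus\{v\}$; and at the shared vertex $d_H(v) = d_{H_1}(v) + d_{H_2}(v)$. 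Thus the tuple $d(H)$ is completely determined by the pair $(d(H_1), d(H_2))$: it is obtained by concatenating the two tuples on the private vertices and adding their $v$-entries.

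Next I would define $\Phi$. Given $(d_1, d_2) \in \D(G_1)\times\D(G_2)$, choose realizing subgraphs $H_1\subseteq G_1$ and $H_2\subseteq G_2$ with $d(H_1)=d_1$ and $d(H_2)=d_2$, and set $\Phi(d_1,d_2) = d(H_1\cup H_2)$. By the decomposition above, this value depends only on $d_1$ and $d_2$ and not on the choice of $H_1, H_2$, so $\Phi$ is well defined; it is surjective because every $H \subseteq G_1\cup G_2$ arises as $H_1\cup H_2$ from its two pieces.

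The crux is injectivity, and this is exactly where degree-determinability enters. Suppose $\Phi(d_1, d_2) = \Phi(d_1', d_2')$. Comparing entries on $V(G_1)\setminus\{v\}$ gives $d_1(w) = d_1'(w)$ for every $w \neq v$ in $G_1$; since $v$ is degree-determinable in $G_1$, this forces $d_1(v) = d_1'(v)$, and hence $d_1 = d_1'$. The $v$-entry of $\Phi$ then yields $d_2(v) = d_2'(v)$, while comparison on $V(G_2)\setminus\{v\}$ gives equality at all other vertices, so $d_2 = d_2'$. Therefore $\Phi$ is a bijection and $|\D(G_1\cup G_2)| = |\D(G_1)|\,|\D(G_2)|$.

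I expect the only real subtlety to be this injectivity step. Without the degree-determinability hypothesis, two pairs $(d_1,d_2)$ and $(d_1',d_2')$ that agree on all private vertices but trade degree across the cut vertex (that is, $d_1(v)+d_2(v) = d_1'(v)+d_2'(v)$ with $d_1(v)\neq d_1'(v)$) would collapse to the same combined tuple, and the factorization would fail. The hypothesis precisely excludes this, which also explains why this statement requires an extra condition absent from the purely multiplicative Proposition~\ref{prop:articulation:2:F}; by Lemma~\ref{lem:determine:bipartite} and Lemma~\ref{lem:determine:leaf} the condition holds automatically when $G_1$ is bipartite or when $v$ is a leaf of $G_1$.
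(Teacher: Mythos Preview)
Your proof is correct and follows essentially the same approach as the paper: both construct the map $\Phi:\D(G_1)\times\D(G_2)\to\D(G_1\cup G_2)$ that concatenates the private-vertex degrees and adds at $v$, verify surjectivity via the splitting $H=(H\cap G_1)\cup(H\cap G_2)$, and use degree-determinability of $v$ in $G_1$ to force $d_1(v)=d_1'(v)$ (and hence $d_2(v)=d_2'(v)$) for injectivity. The only cosmetic difference is that the paper writes $\Phi$ directly as a formula on vertices, whereas you define it via realizing subgraphs and then observe well-definedness; the content is the same.
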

\begin{proof}
  Write $G:=G_1\cup G_2$.
  Define
  \[
      \Phi : \D(G_1)\times\D(G_2) \longrightarrow \D(G)
      \quad\text{by}\quad
      \Phi(d_{G_1},d_{G_2})(u)=
      \begin{cases}
         d_{G_1}(u), \ u\in V(G_1)\setminus\{v\},\\[4pt]
         d_{G_2}(u), \ u\in V(G_2)\setminus\{v\},\\[4pt]
         d_{G_1}(v)+d_{G_2}(v),\ u=v.
      \end{cases}
  \]
  Let us show that $\Phi$ is bijective.
  
  \noindent\emph{Surjectivity.}
  Take any subgraph $K$ of $G$.
  Setting $\begin{cases} d_1:=d(K\cap G_1)\in\D(G_1),\\
  d_2:=d(K\cap G_2)\in\D(G_2) \end{cases}$ gives 
  $\Phi(d_1,d_2)=d(K)$, so $\Phi$ is surjective.

  \smallskip\noindent
  \emph{Injectivity.}
  Let $d_1, d'_1 \in \D(G_1)$ and $d_2, d'_2 \in \D(G_2)$. Suppose that $\Phi(d_1,d_2)=\Phi(d'_1,d'_2)\eqqcolon d$.
  The two tuples agree on every vertex of $V(G_1)\setminus\{v\}$ and of
  $V(G_2)\setminus\{v\}$, hence
  \[
      \begin{cases} d_1(w) = d'_1(w)=d(w)\text{ for any } w\in V(G_1)\setminus\{v\};\\
      d_2(w) = d'_2(w)=d(w)\text{ for any } w\in V(G_2)\setminus\{v\}.
      \end{cases}
  \]
  But since $v$ is degree-determinable in $G_1$, it must be that $d_1(v) = d'_1(v)$. Hence $d_2(v) = d(v) - d_1(v) = d(v) - d'_1(v) = d'_2(v)$. Finally $(d_1, d_2) = (d'_1, d'_2)$ and therefore $\Phi$ is injective.

  Thus $\Phi$ is bijective and so $|\D(G)| \;=\; |\D(G_1)| \, |\D(G_2)|$.
\end{proof}

The following example shows that Proposition \ref{prop:articulation:2:D} may not hold if $v$ is not degree-determinable in both subgraphs.

\begin{example}
Consider a graph $G$ and its subgraphs $G_1, G_2$, $H, H'$:

\medskip

\begin{tikzpicture}[scale=0.7,
        every node/.style={circle,draw,fill=white,inner sep=1.8pt},
        edge/.style   ={blue, very thick}]
  % vertices
  \node (G1) at (0, 6) {$1$};   % left top
  \node (G2) at (0, 4) {$2$};   % left bottom
  \node (G3) at (2, 5) {$3$};   % centre (shared)
  \node (G4) at (4, 6) {$4$};   % right top
  \node (G5) at (4, 4) {$5$};   % right bottom

  \node (G11) at (7, 6) {$1$};   % left top
  \node (G12) at (7, 4) {$2$};   % left bottom
  \node (G13) at (9, 5) {$3$};   % centre (shared)

  \node (G23) at (11, 5) {$3$};   % centre (shared)
  \node (G24) at (13, 6) {$4$};   % right top
  \node (G25) at (13, 4) {$5$};   % right bottom

  \node (H1) at (0, 2) {$1$};   % left top
  \node (H2) at (0, 0) {$2$};   % left bottom
  \node (H3) at (2, 1) {$3$};   % centre (shared)
  \node (H4) at (4, 2) {$4$};   % right top
  \node (H5) at (4, 0) {$5$};   % right bottom

  \node (H'1) at (8, 2) {$1$};   % left top
  \node (H'2) at (8, 0) {$2$};   % left bottom
  \node (H'3) at (10, 1) {$3$};   % centre (shared)
  \node (H'4) at (12, 2) {$4$};   % right top
  \node (H'5) at (12, 0) {$5$};   % right bottom

    \draw[edge] (G3) -- (G1) -- (G2) -- (G3) -- (G4) -- (G5) -- (G3);

    \draw[edge] (G13) -- (G11) -- (G12) -- (G13);

    \draw[edge] (G23) -- (G24) -- (G25) -- (G23);

    \draw[edge] (H1) -- (H3);
    \draw[edge] (H2) -- (H3);
    \draw[edge] (H4) -- (H5);

    \draw[edge] (H'1) -- (H'2);
    \draw[edge] (H'3) -- (H'4);
    \draw[edge] (H'3) -- (H'5);

    \node[draw=none,fill=none] at (-2,3.5) {$ $};
    \node[draw=none,fill=none] at (2,3.5) {$G$};
    \node[draw=none,fill=none] at (8.2,3.5) {$G_1$};
    \node[draw=none,fill=none] at (11.8,3.5) {$G_2$};
    \node[draw=none,fill=none] at (2,-0.5) {$H$};
    \node[draw=none,fill=none] at (10,-0.5) {$H'$};
\end{tikzpicture}

Then the vertex $3$ is not degree-determinable in $G_1$, $G_2$. Observe that \begin{align*}
    d_1 \coloneqq d(G_1\cap H) &= (1, 1, 2);\\
    d_2 \coloneqq d(G_2\cap H) &= (0, 1, 1);\\
    d'_1 \coloneqq d(G_1\cap H') &= (1, 1, 0);\\
    d'_2 \coloneqq d(G_2\cap H') &= (2, 1, 1).
\end{align*}
But $\Phi(d_1, d_2) = \Phi(d'_1, d'_2) = (1,1,2,1,1) = d(H) = d(H')$. Therefore the map $\Phi$ from Proposition \ref{prop:articulation:2:D} is not injective, but since it is always surjective, we conclude that $|\D(G)| < |\D(G_1)| |\D(G_2)|$.
\end{example}

The factorizations in Propositions \ref{prop:articulation:2:F} and \ref{prop:articulation:2:D} in some cases allow us to reduce Hypothesis \ref{<hypo} to the respective subgraphs.

\begin{corollary}\label{cor:factorization}
    Assume that graphs $G_1, G_2$ satisfy the conditions of Proposition \ref{prop:articulation:2:D} and let $G = G_1 \cup G_2$. Then
    \[\begin{cases}
    |\F(G)| = |\F(G_1)||\F(G_2)|;\\
    |\D(G)| = |\D(G_1)||\D(G_2)|.
\end{cases}\]
    In particular, if $\begin{cases}
        |\F(G_1)| \leq |\D(G_1)|;\\
        |\F(G_2)| \leq |\D(G_2)|,
    \end{cases}$ then $|\F(G)| \leq |\D(G)|$.
    
    In addition, the inequality is strict if and only if at least one of the inequalities for $G_1, G_2$ is strict. Note also that $G_1 \cup G_2$ is bipartite if and only if both $G_1$ and $G_2$ are bipartite.
\end{corollary}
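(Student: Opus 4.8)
The plan is to assemble Corollary \ref{cor:factorization} almost entirely from results already proved, treating it as a bookkeeping exercise rather than a genuinely new argument. First I would observe that the two displayed product formulas are immediate: the equation $|\F(G)| = |\F(G_1)||\F(G_2)|$ is exactly Proposition \ref{prop:articulation:2:F}, and $|\D(G)| = |\D(G_1)||\D(G_2)|$ is exactly Proposition \ref{prop:articulation:2:D} (whose hypothesis that $v$ be degree-determinable in $G_1$ or $G_2$ is carried over from the conditions of that proposition). So the content reduces to deriving the inequality statements from these two multiplicative identities.

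Next I would prove the non-strict inequality. Write $f_i = |\F(G_i)|$ and $d_i = |\D(G_i)|$ for $i \in \{1,2\}$, so that the hypotheses read $f_1 \le d_1$ and $f_2 \le d_2$. All four quantities are positive integers (every graph has at least the empty subgraph, so each of these sets is nonempty). From $0 < f_1 \le d_1$ and $0 < f_2 \le d_2$ one multiplies the inequalities to get $f_1 f_2 \le d_1 d_2$, i.e. $|\F(G)| \le |\D(G)|$. For the characterization of strictness, I would argue that the product inequality $f_1 f_2 < d_1 d_2$ holds if and only if at least one factor inequality is strict. The forward direction is the contrapositive: if both $f_1 = d_1$ and $f_2 = d_2$, then $f_1 f_2 = d_1 d_2$, so equality in the product forces equality in both factors (again using positivity, so that $f_1 f_2 = d_1 d_2$ together with $f_1 \le d_1$, $f_2 \le d_2$ implies $f_1 = d_1$ and $f_2 = d_2$). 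The reverse direction is: if say $f_1 < d_1$ while $f_2 \le d_2$, then $f_1 f_2 < d_1 f_2 \le d_1 d_2$, using $f_2 > 0$ in the first step.

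Finally I would dispatch the bipartiteness remark. A graph is bipartite if and only if it has no odd cycle. Since $V(G_1) \cap V(G_2) = \{v\}$ and $E(G_1) \cap E(G_2) = \varnothing$, any cycle of $G = G_1 \cup G_2$ lies entirely within $G_1$ or entirely within $G_2$ (a cycle cannot pass through the single shared vertex $v$ more than once, so it cannot alternate between the two blocks). This is precisely the observation already used in the proof of Proposition \ref{prop:articulation:2:F}. Hence $G$ has an odd cycle if and only if $G_1$ or $G_2$ does, which gives that $G$ is bipartite if and only if both $G_1$ and $G_2$ are bipartite.

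I do not expect any serious obstacle here, since every ingredient is already in place; the only point demanding a little care is the need to record that all four cardinalities are strictly positive, which is what makes the elementary inequality manipulations (in particular the strictness direction) valid. The main thing is to keep the logical structure clean rather than to overcome any real difficulty.
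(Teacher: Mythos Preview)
Your proposal is correct and matches the paper's approach: the paper states this result as a corollary with no separate proof, treating the product formulas as immediate from Propositions~\ref{prop:articulation:2:F} and~\ref{prop:articulation:2:D} and the remaining inequality and bipartiteness statements as elementary consequences, exactly as you outline.
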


In particular, Lemma \ref{lem:determine:bipartite} shows us that we can always perform a factorization on a bipartite graph with an articulation point.

The following simple observation allows us to factor out the leaves in a graph when counting $|\F(G)|$ and $|\D(G)|$. Note that if $H$ is a single-edge graph, then it is bipartite and $|\F(H)| = |\D(H)| = 2$.

\begin{remark}\label{rem:factorization:edge}
    Let $G_1, G_2$ be graphs connected by an articulation point. If $G_2$ is a single edge, then $\begin{cases}
    |\F(G_1 \cup G_2)| = 2|\F(G_1)|;\\
    |\D(G_1 \cup G_2)| = 2|\D(G_1)|.
\end{cases}$
\end{remark}

An important class of graphs that allow this factorization are graphs with a bridge, i.e. an edge that does not lie on any cycle.

\begin{proposition}\label{prop:bridge}
Let $G$ be a graph with a bridge $e$. Denote by $H_1$ and $H_2$ the parts of the graph separated by this bridge. Then \[
\begin{cases}
    |\F(G)| = 2|\F(H_1)||\F(H_2)|;\\
    |\D(G)| = 2|\D(H_1)||\D(H_2)|.
\end{cases}
\]
\end{proposition}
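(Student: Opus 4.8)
The plan is to reduce Proposition \ref{prop:bridge} to the already-established factorization results by viewing the bridge as an intermediate single-edge graph glued at two articulation points. Let the bridge $e$ join a vertex $u \in V(H_1)$ to a vertex $w \in V(H_2)$. Introduce the single-edge graph $E_e$ on the two vertices $\{u, w\}$ with the lone edge $e$. Then $G$ decomposes as $G = H_1 \cup E_e \cup H_2$, where each consecutive pair meets in exactly one vertex and the edge sets are pairwise disjoint: $V(H_1) \cap V(E_e) = \{u\}$, $V(E_e) \cap V(H_2) = \{w\}$, and $V(H_1) \cap V(H_2) = \varnothing$ since $e$ is a bridge.

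First I would handle the forest count. Applying Proposition \ref{prop:articulation:2:F} to the articulation vertex $u$ gives $|\F(H_1 \cup E_e)| = |\F(H_1)|\,|\F(E_e)|$, and since $E_e$ is a single edge we have $|\F(E_e)| = 2$. Then applying Proposition \ref{prop:articulation:2:F} again at $w$ to the pair $(H_1 \cup E_e,\, H_2)$ yields $|\F(G)| = |\F(H_1 \cup E_e)|\,|\F(H_2)| = 2\,|\F(H_1)|\,|\F(H_2)|$, as required. Equivalently, one can simply invoke Remark \ref{rem:factorization:edge} twice, since attaching a single edge at an articulation point doubles the forest count each time.

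For the degree-tuple count I would argue the same way, but here I must check the degree-determinability hypothesis of Proposition \ref{prop:articulation:2:D}. The key observation is that in the single-edge graph $E_e$ every vertex is a leaf, hence degree-determinable by Lemma \ref{lem:determine:leaf}. Therefore, when gluing $H_1$ to $E_e$ at $u$, I choose $E_e$ as the factor in which the shared vertex $u$ is degree-determinable, giving $|\D(H_1 \cup E_e)| = |\D(H_1)|\,|\D(E_e)| = 2\,|\D(H_1)|$. When subsequently gluing this to $H_2$ at $w$, I again take the factor $H_1 \cup E_e$: the shared vertex $w$ has degree $1$ in $E_e \subseteq H_1 \cup E_e$ (it is the non-$u$ endpoint of the bridge and is incident to no other edge of $H_1 \cup E_e$), so $w$ is a leaf of $H_1 \cup E_e$ and thus degree-determinable there by Lemma \ref{lem:determine:leaf}. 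This licenses a second application of Proposition \ref{prop:articulation:2:D}, producing $|\D(G)| = 2\,|\D(H_1)|\,|\D(H_2)|$.

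The only subtle point, and the step I would be most careful about, is verifying the degree-determinability hypothesis at the second gluing: one must confirm that $w$ really is a leaf in $H_1 \cup E_e$ rather than only in the original $H_2$. This holds precisely because $e$ is a bridge, so $w$ has no neighbors outside $H_2$ except the bridge endpoint $u$, and within $H_1 \cup E_e$ its single incident edge is $e$ itself. Once this is in place, the two propositions chain cleanly and no further computation is needed.
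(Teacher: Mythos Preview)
Your proof is correct and follows essentially the same route as the paper: decompose $G$ as $H_1 \cup E_e \cup H_2$, first attach the single edge to $H_1$ (the paper packages this step as Remark \ref{rem:factorization:edge}), and then glue $H_1 \cup E_e$ to $H_2$ using that $w$ is a leaf of $H_1 \cup E_e$ (Lemma \ref{lem:determine:leaf}) to license Proposition \ref{prop:articulation:2:D}. Your exposition is in fact a bit more explicit than the paper's about verifying the degree-determinability hypothesis at the second gluing.
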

\begin{proof}
    Let $H_e$ denote the graph made up of the edge $e$. First, Remark \ref{rem:factorization:edge} implies $\begin{cases}
    |\F(H_e \cup H_1)| = 2|\F(H_1)|;\\
    |\D(H_e \cup H_1)| = 2|\D(H_1)|.
\end{cases}$ Now we apply Corollary \ref{cor:factorization} to the graphs $H_1 \cup H_e$ and $H_2$. This is possible because the graph $H_1 \cup H_e$ satisfies the required conditions by Lemma \ref{lem:determine:leaf}.
\end{proof}

\subsection{Cactus graphs}\label{subsection:cacti}

In this section, using the techniques developed in Section \ref{subseq:factorizations}, we prove Hypothesis \ref{<hypo} for cactus graphs. A connected graph is called a \textit{cactus} if every edge belongs to at most one cycle.

\begin{proposition}\label{prop:cactus}
    If $G$ is a bipartite cactus, then it is an $\mathsf{FED}$-graph. In other words, $|\F(G)| = |\D(G)|$.
\end{proposition}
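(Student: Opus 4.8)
The plan is to prove this by induction on the number of cycles in the cactus $G$, using the factorization tools from Section~\ref{subseq:factorizations} to peel off one block at a time. Since $G$ is a cactus, every edge lies on at most one cycle, so the blocks (maximal $2$-connected subgraphs together with bridges) are either single edges (bridges) or individual cycles, glued together at articulation vertices in a tree-like fashion. Because $G$ is bipartite, every cycle it contains is \emph{even}.

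The base case is a cactus with no cycles, i.e.\ an acyclic graph, which is an $\mathsf{FED}$-graph by Proposition~\ref{prop:trees}. For the inductive step I would locate a \emph{leaf block}: a block $B$ that meets the rest of the graph $G'$ in exactly one articulation vertex $v$ (such a block exists in any graph with at least one block, by considering the block-cut tree). Two cases arise. If $B$ is a bridge, Proposition~\ref{prop:bridge}, or more directly Remark~\ref{rem:factorization:edge}, reduces the problem to the smaller cactus obtained by deleting the pendant edge (and its leaf), which has strictly fewer edges and no more cycles. If $B$ is an (even) cycle $C$, then $V(B)\cap V(G')=\{v\}$ and $E(B)\cap E(G')=\varnothing$, so the hypotheses of Propositions~\ref{prop:articulation:2:F} and~\ref{prop:articulation:2:D} are satisfied. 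The key point is that since $G$ is bipartite, so is each piece, and Lemma~\ref{lem:determine:bipartite} guarantees that $v$ is degree-determinable in $B$ (indeed in both pieces). Hence Corollary~\ref{cor:factorization} applies and gives
\[
    |\F(G)| = |\F(B)|\,|\F(G')|, \qquad |\D(G)| = |\D(B)|\,|\D(G')|.
\]

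It then suffices to know that the even cycle $B=C$ is itself an $\mathsf{FED}$-graph, which is exactly the even case of Proposition~\ref{prop:cycles}: $|\F(C)|=|\D(C)|=2^{|V(C)|}-1$. Combined with the inductive hypothesis $|\F(G')|=|\D(G')|$ applied to the smaller bipartite cactus $G'$ (it is connected, has fewer cycles, and remains bipartite as a subgraph of $G$), the factorization yields $|\F(G)|=|\D(G)|$, completing the induction.

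The main obstacle is a bookkeeping/structural one rather than a computational one: I must argue carefully that a leaf block $B$ always satisfies the disjointness conditions $V(B)\cap V(G')=\{v\}$ and $E(B)\cap E(G')=\varnothing$ required by the factorization propositions, and that the residual graph $G'$ is again a connected bipartite cactus (so the induction hypothesis genuinely applies). The cactus condition is precisely what makes a leaf cycle share only a single vertex with the rest of the graph---if two cycles shared an edge or if a cycle met the remainder in two vertices, the factorization at a single articulation point would fail. So the crux is to invoke the block-cut-tree structure to extract such a leaf block and to verify that deleting it preserves connectivity and bipartiteness; once that structural reduction is in place, the numerical equality follows mechanically from Corollary~\ref{cor:factorization} and Proposition~\ref{prop:cycles}.
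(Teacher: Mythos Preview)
Your proposal is correct and follows essentially the same approach as the paper: induct on a size parameter, handle the acyclic base case via Proposition~\ref{prop:trees}, the single even cycle via Proposition~\ref{prop:cycles}, and otherwise split at an articulation vertex using Corollary~\ref{cor:factorization} (with Lemma~\ref{lem:determine:bipartite} supplying degree-determinability). The only cosmetic difference is that the paper inducts on the number of edges and simply says ``there is an articulation point,'' while you induct on the number of cycles and invoke the block-cut tree to locate a leaf block explicitly.
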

\begin{proof}
    We prove this by induction on the number of edges of $G$. The statement is trivial for $|E(G)| \leq 2$: if $G$ is acyclic, then $|\F(G)| = |\D(G)|$ by Proposition \ref{prop:trees}.
    
    Assume that the statement is true whenever $|E(G)| \leq m$. Let $G$ be a cactus with $m+1$ edges. Once again, if $G$ is acyclic, then $|\F(G)| = |\D(G)|$. If $G$ is a cycle, then it must be even and the result follows from Proposition \ref{prop:cycles}. Otherwise, the connectivity implies that there is at least one articulation point $v$ in $G$ (take any cycle, it must have a vertex that does not belong to only that cycle). 
    In this case, $G$ can be viewed as a union of two smaller cacti that intersect in $v$.
    Then Corollary \ref{cor:factorization} allows us to reduce the computations to the respective subgraphs, and the induction hypothesis concludes the proof.
\end{proof}

Together with Corollary \ref{cor:nonbipartite:cactus} this proves Hypothesis \ref{<hypo} for cactus graphs:
\begin{theorem}
    All bipartite cacti are $\mathsf{FED}$-graphs. All non-bipartite cacti are $\mathsf{FLD}$-graphs.
    
    In other words, $|\F(G)| \leq |\D(G)|$ for any cactus $G$ and $G$ is bipartite if and only if $|\F(G)| = |\D(G)|$.
\end{theorem}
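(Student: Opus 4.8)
The plan is to split the statement into its bipartite and non-bipartite halves, since Proposition~\ref{prop:cactus} already settles the former: every bipartite cactus satisfies $|\F(G)|=|\D(G)|$, which is the equality case of the displayed reformulation. It then remains to prove that every non-bipartite cactus is an $\mathsf{FLD}$-graph, i.e. $|\F(G)|<|\D(G)|$ — the content of Corollary~\ref{cor:nonbipartite:cactus}. Combining the two halves with the trivial dichotomy that a cactus is either bipartite or not yields simultaneously the inequality $|\F(G)|\le|\D(G)|$ for all cacti and the characterization that equality holds exactly for the bipartite ones.

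For the non-bipartite half I would argue by induction on $|E(G)|$. The base case is the triangle, where Proposition~\ref{prop:cycles} gives $|\F(C_3)|=7<8=|\D(C_3)|$. In the inductive step the decisive structural fact is that a non-bipartite cactus is either $2$-connected — in which case it is a single, necessarily odd, cycle and Proposition~\ref{prop:cycles} applies — or it has a cut vertex. Whenever the graph carries a bridge or a pendant edge I would peel it off using Proposition~\ref{prop:bridge} or Remark~\ref{rem:factorization:edge}: the odd cycle survives in exactly one of the two pieces, that piece is $\mathsf{FLD}$ by the induction hypothesis, the complementary piece is a smaller cactus and hence satisfies $|\F|\le|\D|$ (by Proposition~\ref{prop:cactus} if bipartite, by the induction hypothesis otherwise), and the multiplicativity of both counts turns a single strict factor into a strict inequality for the product, since every count is at least $1$.

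More generally, at a cut vertex $v$ one writes $G=G_1\cup G_2$ with $V(G_1)\cap V(G_2)=\{v\}$; each $G_i$ is again a cactus and, as $G$ is non-bipartite, at least one $G_i$ is non-bipartite. If $v$ is degree-determinable in one piece — for instance when that piece is bipartite (Lemma~\ref{lem:determine:bipartite}) or $v$ is a leaf there (Lemma~\ref{lem:determine:leaf}) — then Corollary~\ref{cor:factorization} factors both $|\F|$ and $|\D|$ exactly, and its strictness clause closes the step exactly as above.

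The main obstacle is the remaining configuration: a bridgeless non-bipartite cactus that is not a single cycle, in which every cut vertex is the meeting point of two or more cycles and need not be degree-determinable on either side. The bowtie (two triangles glued at a vertex) is the prototype: at the shared vertex $v$, the subgraph carrying only the opposite edge and the subgraph carrying only the two edges at $v$ have identical degrees on the other two vertices, so $v$ fails to be degree-determinable in either triangle and Corollary~\ref{cor:factorization} does not apply. Here Proposition~\ref{prop:articulation:2:F} still gives $|\F(G)|=|\F(G_1)|\,|\F(G_2)|$ exactly, whereas the surjectivity half of Proposition~\ref{prop:articulation:2:D} only yields $|\D(G)|\le|\D(G_1)|\,|\D(G_2)|$ — the wrong direction. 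The crux of Corollary~\ref{cor:nonbipartite:cactus}, which I would reproduce, is therefore to control how far $|\D(G)|$ drops below the product: one must show that the collisions of the gluing map $\Phi$ are sparse enough that $|\D(G)|$ still strictly exceeds the exactly-factoring quantity $|\F(G)|$. I expect this to need a direct analysis of which degrees at $v$ are simultaneously realizable on both sides, together with the alternating-Euler-circuit criterion of Theorem~\ref{thm:sym:euler} to pin down the forest-degree collisions, rather than any further black-box factorization.
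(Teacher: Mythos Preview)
Your outline is accurate up to and including the identification of the obstacle, but it stops precisely where the actual work begins. You correctly note that at the bowtie (and more generally whenever the leaf block is an odd cycle) neither side makes $v$ degree-determinable, so Corollary~\ref{cor:factorization} is unavailable; you then say you would ``reproduce'' Corollary~\ref{cor:nonbipartite:cactus} by a ``direct analysis'' via Theorem~\ref{thm:sym:euler}. This is not a proof sketch --- it is a restatement of the problem. Two concrete ideas are missing.

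First, the decomposition should not be at an arbitrary cut vertex but at a \emph{leaf cycle}: every cactus that is not a single cycle has either a leaf (handled by Remark~\ref{rem:factorization:edge}) or a cycle meeting the rest of the graph in a single vertex. Taking $G_1$ to be that cycle makes $G_1$ either an even cycle (bipartite, so $v$ is degree-determinable in $G_1$ and Corollary~\ref{cor:factorization} applies after all) or an odd cycle. This reduces the hard case to the single configuration $G=C\cup G_2$ with $C$ an odd cycle.

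Second, for that configuration the paper does not try to bound the collisions of $\Phi$ against the full product $|\D(C)|\,|\D(G_2)|$; it proves the sharper Lemma~\ref{lem:articulation:odd_cycle}: $|\D(C\cup G_2)|>|\F(C)|\,|\D(G_2)|$. The point is that among the $2^{n+1}$ degree tuples in $\D(C)$, only one --- the tuple with $d(v)=0$ and all other coordinates equal to $1$ --- fails to be determined by its restriction to $V(C)\setminus\{v\}$. Throwing that single tuple away leaves a set $\D'(C)$ of size $2^{n+1}-1=|\F(C)|$ on which $\Phi$ is injective, and one explicit extra image point gives the strict inequality. Chaining with $|\D(G_2)|\ge|\F(G_2)|$ (Proposition~\ref{prop:cactus} or the induction hypothesis) and Proposition~\ref{prop:articulation:2:F} then yields $|\D(G)|>|\F(C)|\,|\F(G_2)|=|\F(G)|$. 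Your proposed route through Theorem~\ref{thm:sym:euler} does not supply this count, and without it the argument does not close.
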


\subsection{Generalized book graphs}\label{subsection:books}

    In Section \ref{subsection:cacti}, we have proved Hypothesis \ref{<hypo} for cactus graphs, i.e. graphs in which every edge can lie on at most one cycle.  
    In this section, we prove Hypothesis \ref{<hypo} for  generalized book graphs, which is the natural next step.
    
    \begin{definition}\rm
        A {\em generalized book graph}, see \cite{carlson2006books}, is a graph formed by cycles sharing exactly one edge (known as the \textit{spine} or \textit{base} of the book). Each cycle is called a {\em page}.

        For a $k$-page generalized book, let $P_i$ denote the subgraph of the $i$-th page obtained by deleting the base edge. We call the vertices not incident to the base {\em internal}.
    \end{definition}

    In graph theory, a ``book graph'' may refer to several kinds of graphs formed by multiple cycles sharing an edge. For example, it is usually assumed that the cycles are of the same length. So the word ``generalized'' in the name refers to the fact that the lengths of cycles are arbitrary.

    It is relatively easy to find $|\F(B)|$ for a generalized book graph $B$.
    \begin{proposition}\label{prop:F:book}
        Let $B$ be a generalized book graph formed by $k$ cycles of lengths $c_1, \ldots, c_k$. Denote $R \coloneqq \prod\limits_{i=1}^k (2^{c_i - 1} - 1)$. Then \begin{equation}\label{eq:F:book}
            |\F(B)| = 2\cdot R + \sum_{j=1}^{k}\frac{R}{2^{c_j - 1} - 1}.
        \end{equation}
    \end{proposition}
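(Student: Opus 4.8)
The plan is to classify the acyclic subgraphs of $B$ by two simple pieces of data and then count directly. Write $b$ for the base edge with endpoints $u$ and $w$, and recall that each page $i$ is the cycle $b \cup P_i$, where $P_i$ is a $u$–$w$ path with $c_i - 1$ edges passing through $c_i - 2$ internal vertices. The crucial structural fact is that every internal vertex has degree $2$ in $B$ and lies on exactly one page; distinct pages share only the edge $b$ and its endpoints, so no cycle of any subgraph $H \subseteq B$ can branch at an internal vertex.

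First I would establish a cycle characterization. Since only $u$ and $w$ can have degree exceeding $2$, any cycle of $H$ must enter and leave $u$ and $w$ and traverse internal vertices without branching; hence every cycle is obtained by joining $u$ to $w$ along a fully present copy of some $P_i$ and returning either along the base edge $b$ or along a fully present copy of some other $P_j$. Call a page $i$ \emph{complete} in $H$ if all $c_i - 1$ edges of $P_i$ belong to $H$. Then $H$ contains a cycle if and only if either two distinct pages are complete, or the base edge is present together with at least one complete page.

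Next I would translate this into a count. Let $t$ denote the number of complete pages in $H$, and let $\beta \in \{0,1\}$ record whether $b \in E(H)$. The characterization says that $H$ is acyclic precisely when $t = 0$ (with $\beta$ arbitrary), or when $t = 1$ and $\beta = 0$. For each page $i$ there are $2^{c_i - 1}$ choices of edge-subset of $P_i$, exactly one of which is complete, so $2^{c_i - 1} - 1$ of which are \emph{broken}. Summing the two acyclic cases gives
\[
|\F(B)| = \underbrace{2\prod_{i=1}^k (2^{c_i - 1} - 1)}_{t = 0,\ \beta \in \{0,1\}} \;+\; \underbrace{\sum_{j=1}^k \prod_{i \neq j} (2^{c_i - 1} - 1)}_{t = 1,\ \beta = 0},
\]
which, recognizing $\prod_{i\neq j}(2^{c_i-1}-1) = R/(2^{c_j-1}-1)$, is exactly $2R + \sum_{j=1}^k R/(2^{c_j-1}-1)$, as claimed.

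The one delicate point—the step I would write most carefully—is the cycle characterization, and specifically the verification that no ``exotic'' cycle can arise. One must argue that a broken page contributes no cycle and cannot combine with the base edge or with other broken pages to close one: if $P_i$ is broken, some internal vertex of degree $\le 2$ interrupts the unique $u$–$w$ traversal through page $i$, so no closed walk can pass through it. Once the degree-$2$ condition on internal vertices is shown to force every cycle into the base-plus-one-page or two-page form, the case split on $(t,\beta)$ is exhaustive and mutually exclusive, and the product/sum count is routine.
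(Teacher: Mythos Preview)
Your proof is correct and follows essentially the same approach as the paper: both arguments split the acyclic subgraphs according to whether the base edge is present and how many pages are complete, then use the product count $\prod_{i\neq j}(2^{c_i-1}-1)$ for the broken pages. Your treatment of the cycle characterization is somewhat more explicit than the paper's, but the decomposition and the resulting count are identical.
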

    \begin{proof}
        In order to find the number of acyclic subgraphs of $B$, we count the forests containing the base and not containing the base separately.

        In order to build a forest that contains the base, we can independently choose a subgraph of $P_i$ for every page $i$. The only restriction is that we can't use $P_i$ itself, since together with the base it would form a cycle. Since $P_i$ is an $(c_i - 1)$-edge graph,  we can choose $2^{c_i - 1} - 1$ subgraphs of $P_i$. Thus the number of forests that contain the base is \begin{equation*}
            \prod_{i=1}^k (2^{c_i - 1} - 1) = R.
        \end{equation*}

        For forests that do not contain the base the situation is slightly different. Each of the $R$ forests with the base can be turned into a forest without the base by removing it. But now we have an additional option: a forest without the base can contain all edges of exactly one $P_i$. A subgraph with complete $P_i, P_j$ is not acyclic, since together they form a cycle. Once we choose $P_i$, we choose the subgraphs of the remaining pages as in the previous case.
        Thus the number of forests without the base is \begin{equation*}
            R + \sum_{j = 1}^k \frac{R}{2^{c_j-1} - 1}.
        \end{equation*}
        Together we obtain \eqref{eq:F:book}.
    \end{proof}
\smallskip
    For $|\D(B)|$ the situation is more complex. In particular, we need to take into account the numbers of odd and even pages in $B$. We start with a bipartite case, i.e. when all pages are even cycles. We first make the following technical observation.

    \begin{lemma}\label{lem:deg:0,2}
        Let $P$ be a path $(l, v_1, \ldots, v_n, r)$, where $n$ is even. Let $d \in \D(P)$. Assume that $d(v_i)$ is known for all $i = 1, \ldots, n$. There are two cases:
        \begin{enumerate}
            \item If $d(v_i) \in \{0, 2\}$ for some $i$, then we can uniquely determine $d(l)$ and $d(r)$.

            \item If $d(v_i) = 1$ for every $i$, then there are exactly two possibilities: either $d(l) = d(r) = 0$, or $d(l) = d(r) = 1$.
        \end{enumerate}
         Moreover, in $\D(P)$ there are $2^{n+1} - 2$ tuples of the first type.
    \end{lemma}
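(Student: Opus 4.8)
The plan is to parametrize subgraphs of $P$ by their edge-indicator vectors and read off the degrees as consecutive partial sums, reducing the whole statement to a linear recurrence over $\{0,1\}$. Label the $n+1$ edges of $P$ as $e_0 = (l, v_1)$, $e_i = (v_i, v_{i+1})$ for $1 \le i \le n-1$, and $e_n = (v_n, r)$; for a subgraph $H \subseteq P$ write $x_i \in \{0,1\}$ for the indicator of $e_i \in E(H)$. Then $d(l) = x_0$, $d(r) = x_n$, and $d(v_i) = x_{i-1} + x_i$ for $1 \le i \le n$. The central observation, which I would isolate first, is that once the internal degrees $d(v_1), \ldots, d(v_n)$ are fixed, the relation $x_i = d(v_i) - x_{i-1}$ lets us propagate the value of any single $x_j$ to the entire vector $x$ in both directions; hence knowing one edge-indicator determines the whole subgraph.

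With this in hand, Case 1 is immediate. If $d(v_i) \in \{0, 2\}$ for some $i$, then $x_{i-1} + x_i \in \{0,2\}$ forces $x_{i-1} = x_i$, both equal to $0$ or both to $1$. This pins down a single coordinate of $x$, and by the propagation the entire vector is determined by the internal degrees; in particular $d(l) = x_0$ and $d(r) = x_n$ are uniquely determined.

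For Case 2, if $d(v_i) = 1$ for all $i$, the relations $x_{i-1} + x_i = 1$ force the $x_j$ to alternate, so there are exactly two admissible vectors, selected by the choice $x_0 \in \{0,1\}$. Since $x_j = x_0$ for even $j$ and $x_j = 1 - x_0$ for odd $j$, and $n$ is even, we get $x_n = x_0$, hence $d(r) = d(l)$; the choices $x_0 = 0$ and $x_0 = 1$ give precisely $d(l) = d(r) = 0$ and $d(l) = d(r) = 1$. This is the path analogue of the alternating-subgraph dichotomy already seen for cycles in Proposition~\ref{prop:cycles}.

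Finally, for the count of first-type tuples I would argue by a bijection with subgraphs. There are $2^{n+1}$ subgraphs of $P$ in total, and the second-type subgraphs (all internal degrees equal to $1$) are exactly the two alternating vectors from Case~2; hence there are $2^{n+1} - 2$ first-type subgraphs. By Case~1 the degree tuple of a first-type subgraph determines that subgraph, so distinct first-type subgraphs yield distinct degree tuples. The nearest thing to an obstacle is ruling out interference between the two types in the count: I would note that whether a tuple is of the first or second type is decided by its internal coordinates alone, so a first-type tuple can never coincide with a second-type tuple. The first-type subgraphs are therefore in bijection with the first-type tuples of $\D(P)$, giving exactly $2^{n+1} - 2$ of them.
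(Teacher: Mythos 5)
Your proof is correct and follows essentially the same route as the paper: you formalize with edge-indicator variables the paper's observation that the internal degrees propagate to determine the whole subgraph unless every internal degree equals $1$, and the parity argument for the two alternating subgraphs is identical. The only small difference is in the final count, where the paper invokes the acyclicity of $P$ together with Theorem~\ref{thm:sym:euler} to conclude that distinct subgraphs have distinct degree tuples, whereas you derive the needed injectivity on first-type subgraphs directly from the propagation; both are valid.
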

    \begin{proof}
        Knowing degrees of all internal vertices ($v_1, \ldots, v_n$) of a subgraph $H \subseteq P$ and whether a certain edge belongs to $E(H)$ is enough to uniquely determine $H$. This is the case if $d(v_i) \neq 1$ for some $i$: in this case we can determine whether the edges incident to $v_i$ belong to a subgraph, and thus determine the entire subgraph uniquely.

        There are exactly two subgraphs with $d(v_i) = 1$ for every $i = 1, \ldots, n$. The first one is obtained by taking every second edge starting from $(l, v_1)$. The second one is obtained by taking every second edge starting from $(v_1, v_2)$. Since $n$ is even, we obtain that in the former case $d(l) = d(r) = 1$, while in the latter case $d(l) = d(r) = 0$, see the illustration below:

    \medskip
        \begin{tikzpicture}[scale=0.6,
        every node/.style={circle,draw,fill=white,inner sep=1.8pt},
        edge/.style   ={very thick}]   % adjust “scale” to enlarge/shrink

  %----------------- vertices -----------------

  \node (ll) at (0,0) {\large $l$};
  \node (l1) at (0,2) {$v_1$};
  \node (l2) at (3,2) {$v_2$};
  \node (l3) at (6,2) {$v_3$};
  \node (l4) at (9,2) {$v_4$};
  \node (lr) at (9,0) {\large $r$};
  
  \node (rl) at (15,0) {\large $l$};
  \node (r1) at (15,2) {$v_1$};
  \node (r2) at (18,2) {$v_2$};
  \node (r3) at (21,2) {$v_3$};
  \node (r4) at (24,2) {$v_4$};
  \node (rr) at (24,0) {\large $r$};

  %----------------- edges -----------------------
  \draw[blue, very thick] (ll)--(l1);
  \draw[blue, very thick] (l2)--(l3);
  \draw[blue, very thick] (l4)--(lr);
  
  \draw[blue, very thick] (r1)--(r2);
  \draw[blue, very thick] (r3)--(r4);

  \draw[dotted, thick] (rl)--(r1);
  \draw[dotted, thick] (r2)--(r3);
  \draw[dotted, thick] (r4)--(rr);
  
  \draw[dotted, thick] (l1)--(l2);
  \draw[dotted, thick] (l3)--(l4);

\end{tikzpicture}

        We have shown that vertex-degree tuples of all but two subgraphs of $P$ satisfy the first condition. Note that since $P$ is acyclic, all its subgraphs have distinct vertex-degree tuples by Theorem \ref{thm:sym:euler}. Thus in $\D(P)$ there are $2^{n+1} - 2$ tuples satisfying the first condition.
    \end{proof}

    \begin{proposition}\label{prop:D:bipartite:book}
        Let $B$ be a bipartite generalized book graph formed by $k$ cycles of lengths $c_1, \ldots, c_k$. For every subset $S \subseteq \{1, \ldots, k\}$ denote \begin{equation*}T_S \coloneqq \prod\limits_{i \notin S} (2^{c_i - 1} - 2).\end{equation*}
        Then \begin{equation}\label{eq:D:bipartite:book}
            |\D(B)| = \sum_{S \subseteq \{1, \ldots, k\}} (|S| + 2) T_S.
        \end{equation}
    \end{proposition}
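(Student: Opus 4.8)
The plan is to decompose each degree sequence of $B$ into the degree data of the individual pages and then count the admissible ways of assembling these into a global degree tuple. Label the two endpoints of the base (spine) edge by $l$ and $r$, so that each page $i$, with the base deleted, becomes a path $P_i = (l, v_1^i, \ldots, v_{n_i}^i, r)$ with $n_i = c_i - 2$ internal vertices and $c_i - 1$ edges. Since $B$ is bipartite, every $c_i$ is even, hence every $n_i$ is even, and Lemma \ref{lem:deg:0,2} applies to each $P_i$. A subgraph $H \subseteq B$ is an arbitrary choice of the base edge (recorded by $\beta \in \{0,1\}$) together with, for each page $i$, a subgraph $H_i \subseteq P_i$; the internal vertices of $P_i$ lie on no other page, so their degrees in $H$ depend only on $H_i$, while $d_H(l) = \beta + \sum_i a_i$ and $d_H(r) = \beta + \sum_i b_i$, where $a_i, b_i \in \{0,1\}$ record whether the two boundary edges of $P_i$ lie in $H_i$. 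Thus $d(H)$ is completely described by the tuple of internal degree sequences $(\delta_1, \ldots, \delta_k)$ together with the pair $(d_H(l), d_H(r))$.

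First I would fix a global \emph{internal configuration} $(\delta_1, \ldots, \delta_k)$, where $\delta_i$ is the restriction to the internal vertices of the degree sequence of some $H_i \subseteq P_i$, and count how many distinct pairs $(d(l), d(r))$ it can produce. Let $S = \{i : \delta_i = (1, \ldots, 1)\}$ be the set of pages whose internal degrees are all $1$. By Lemma \ref{lem:deg:0,2}, for each page $i \notin S$ the boundary pair $(a_i, b_i)$ is uniquely forced by $\delta_i$, whereas for each $i \in S$ there are exactly the two options $(a_i, b_i) \in \{(0,0), (1,1)\}$, say $a_i = b_i = t_i$ with $t_i \in \{0,1\}$. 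Writing $A = \sum_{i \notin S} a_i$ and $B = \sum_{i \notin S} b_i$ (both fixed by the configuration) and $t = \sum_{i \in S} t_i$, we get $d(l) = \beta + A + t$ and $d(r) = \beta + B + t$. Since $d(l) - d(r) = A - B$ is constant, the pair is determined by the single quantity $\beta + t$, which ranges over $\{0, 1, \ldots, |S| + 1\}$ as $\beta \in \{0,1\}$ and $t \in \{0, \ldots, |S|\}$. Hence this configuration yields exactly $|S| + 2$ distinct degree sequences of $B$.

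To finish I would sum over all internal configurations. Two distinct configurations produce different internal degree tuples, hence disjoint sets of degree sequences of $B$, so $|\D(B)|$ is the sum of $|S| + 2$ over all configurations. By Lemma \ref{lem:deg:0,2} — together with the fact that a path is acyclic, so all its subgraphs have distinct degree sequences by Proposition \ref{prop:trees} — page $i$ has exactly $2^{c_i - 1} - 2$ internal degree sequences of the first type and a single one of the second type (all $1$'s). Grouping configurations by their set $S$ of second-type pages, the number with prescribed $S$ equals $\prod_{i \notin S}(2^{c_i - 1} - 2) = T_S$, and each contributes $|S| + 2$, giving $|\D(B)| = \sum_{S \subseteq \{1, \ldots, k\}} (|S| + 2) T_S$, which is exactly \eqref{eq:D:bipartite:book}.

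The hard part will be the per-configuration count of $(d(l), d(r))$: one must verify that the base-edge toggle $\beta$ and the $|S|$ independent choices $t_i$ interact so that the reachable values of $\beta + t$ fill the full interval $\{0, \ldots, |S|+1\}$ — no more, because $d(l) - d(r)$ is pinned by the type-1 pages, and no fewer, because every intermediate value of $\beta+t$ is realizable — and that collapsing all $(\beta, (t_i))$ with the same $\beta + t$ onto a single degree sequence is legitimate. Once this is established, the disjointness across configurations and the exact first-type/second-type census supplied by Lemma \ref{lem:deg:0,2} make the final summation routine.
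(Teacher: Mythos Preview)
Your proposal is correct and follows essentially the same route as the paper's proof: decompose $B$ into the base edge plus the page-paths $P_i$, invoke Lemma~\ref{lem:deg:0,2} to split pages into ``determined'' versus ``all-ones'' types, group internal configurations by the set $S$ of all-ones pages, and show that each configuration with given $S$ contributes exactly $|S|+2$ possible values for $(d(l),d(r))$ while there are $T_S$ such configurations. Your observation that $d(l)-d(r)=A-B$ is fixed and that the pair is governed by $\beta+t\in\{0,\ldots,|S|+1\}$ is precisely the paper's Step~4; the only cosmetic blemish is reusing the symbol $B$ for both the book and the sum $\sum_{i\notin S} b_i$.
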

    \begin{proof}
    Since $B$ is bipartite, every $c_i$ must be even.  Denote the base of $B$ by $e = (l, r)$.
    
    \textbf{Step 1.} Observe that $B$ is the union of $P_1, \ldots, P_k$ and the base. Given a subgraph $H$ of $B$, we can uniquely determine $d\coloneqq d(H)$ by $d_1\coloneqq d(P_1\cap H), \ldots, d_k \coloneqq d(P_k \cap H)$ and whether $e 
    \in E(H)$. Namely, \begin{equation*} d(v) = \begin{cases}
        d_i(v), \ v \in V(P_i) \setminus \{l, r\};\\
        \delta + d_1(l) + \ldots + d_k(l), \ v = l;\\
        \delta + d_1(r) + \ldots + d_k(r), \ v = r,
    \end{cases} \text{ where} \quad \delta = \begin{cases}
        1, \ e \in E(H);\\
        0, \ e \not\in E(H).
    \end{cases}\end{equation*}

    \textbf{Step 2.} Consider $d_i \in \D(P_i)$. There are two possibilities. If $d_{i}(v) \in \{0, 2\}$ for some internal vertex $v$ of $P_i$, then due to Lemma \ref{lem:deg:0,2}, we can uniquely determine $d_{i}(l)$ and $d_{i}(r)$ by the degrees of all the internal vertices. If $d_{i}(v) = 1$ for all internal vertices of $P_i$, then there are two possibilities: $d_i(l) = d_i(r) = 0$ or $1$.

    \textbf{Step 3.} Assuming that we know the degrees of all internal vertices, how many elements in $\D(B)$ match them? That is, given the degrees of the internal vertices, how many options do we have for the degrees of $l, r$? It turns out, the answer depends on how many pages have all internal vertices of degree $1$. We split $\D(B)$ by the number of such pages. Consider arbitrary $S \subseteq \{1, \ldots, k\}$. Let us find the number of elements in $\D(B)$ such that $S$ is the set of pages with all internal vertices of degree $1$.

    \textbf{Step 4.} First, we can choose the internal-vertex degrees of the remaining $k - |S|$ pages in any way, except for all ones. By Lemma \ref{lem:deg:0,2} there are $2^{c_i - 1} - 2$ ways to choose the internal degrees of a page $P_i$ for $i \not \in S$. Thus there are $T_S$ ways to assign internal degrees. Now for each internal-vertex-degree configuration, how many elements in $\D(B)$ match it? Since we know the degrees of all internal vertices, the question becomes how many options do we have for the degrees of $l, r$? Recall that \begin{equation}\label{eq:D:l,r}
        \begin{cases}
        d(l) = \delta + \sum\limits_{i \in S} d_i(l) + \sum\limits_{j \notin S} d_j(l);\\
        d(r) = \delta + \sum\limits_{i \in S} d_i(r) + \sum\limits_{j \notin S} d_j(r).
        \end{cases}
    \end{equation}
    In \eqref{eq:D:l,r} we can choose $\delta \in \{0, 1\}$ and we can choose $d_i(l) = d_i(r) \in \{0, 1\}$ for any $i \in S$, while $\sum\limits_{j \notin S} d_j(l)$ and $\sum\limits_{j \notin S} d_j(r)$ are already determined. It follows that $\delta + \sum\limits_{i \in S} d_i(l) = \delta + \sum\limits_{i \in S} d_i(r) \in \{0, \ldots, |S| + 1\}$, which gives $|S| + 2$ options for $d(l), d(r)$, see Example \ref{example:Step4} for illustration.

    Finally, once we take into account every possible subset $S$ of $\{1, \ldots, k\}$, we obtain \begin{equation*}
            |\D(B)| = \sum_{S \subseteq \{1, \ldots, k\}} (|S| + 2) T_S.
        \end{equation*}
    \end{proof}
The following example illustrates Step 4 of the proof.
\begin{example}\label{example:Step4}
Consider the case $|S| = 2$. Then there are four options for the degrees of $l$ and $r$, more precisely, for $\delta + \sum\limits_{i \in S} d_i(l)$. On the picture below there are $4$ two-page books with all internal vertices of degree $1$. The degrees of $l, r$ can vary from $0$ to $3$.

\bigskip

    \begin{tikzpicture}[scale=0.65,
        thick,                                % line width
        blue,                                 % colour
        dotted/.style={dash pattern=on 1pt off 1pt}, % tiny dots as in the screenshot
        % helper: #1 = x-shift, #2…#7 = six edge styles (0-5),
        %          #8 = middle edge style
        declare function={R=2;}               % circumscribed-circle radius
      ]
  \newcommand*\Hex[8]{%
    \begin{scope}[xshift=#1cm]               % move each copy sideways
      \foreach \i/\sty in {0/#2,1/#3,2/#4,3/#5,4/#6,5/#7}{%
        \draw[\sty] (\i*60:R) -- ({(\i+1)*60}:R);
      }
      % middle horizontal
      \draw[#8] (  0:R) -- (180:R);
    \end{scope}
  }

  %---------------- 4 copies ----------------
  %            E0      E1      E2      E3      E4      E5    mid
  \Hex{15}{solid}{dotted}{solid}{solid}{dotted}{solid}{solid} % fig 1
  \Hex{10}{solid}{dotted}{solid}{solid}{dotted}{solid}{dotted} % fig 2
  \Hex{5}{solid}{dotted}{solid}{dotted}{solid}{dotted}{dotted} % fig 3
  \Hex{0}{dotted}{solid}{dotted}{dotted}{solid}{dotted}{dotted} % fig 4

\node[draw=none,fill=none] at (-3, 0) {$ $};
  
\end{tikzpicture}

\bigskip

Note that these are all the possibilities for the degrees of $l, r$, but not all possible subgraphs. Namely, there are two more subgraphs that realize $d(l)=d(r) = 1$ and two more subgraphs that realize $d(l)=d(r) = 2$. In total we get $8=2^3$, which is the number of subgraphs of a two-page book with all internal vertices of degree $1$.
\end{example}

As it turns out, the expressions \eqref{eq:F:book} and \eqref{eq:D:bipartite:book} are equal. This is shown in Corollary \ref{cor:equality:books}. Together with Lemma \ref{lem:nonbipartite:book}, this proves Hypothesis \ref{<hypo} for generalized books: 

\begin{theorem}

    All bipartite generalized books are $\mathsf{FED}$-graphs. All non-bipartite generalized books are $\mathsf{FLD}$-graphs.
    
    In other words, $|\F(G)| \leq |\D(G)|$ for any generalized book $G$ and $G$ is bipartite if and only if $|\F(G)| = |\D(G)|$.
\end{theorem}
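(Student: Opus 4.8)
The plan is to treat both assertions through a single closed form for $|\D(B)|$ valid for pages of arbitrary parity, and then read off the two cases by comparing with the expression \eqref{eq:F:book} for $|\F(B)|$. Write $a_i = 2^{c_i-1}-1$ and $b_i = 2^{c_i-1}-2$, so that $R = \prod_i a_i$ and $T_S = \prod_{i\notin S} b_i$. The first observation is that the proof of Proposition \ref{prop:F:book} never uses bipartiteness, so $|\F(B)| = 2R + \sum_j R/a_j$ holds for \emph{every} generalized book.

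First I would establish the analogue of Lemma \ref{lem:deg:0,2} for an odd page $P = (l, v_1, \ldots, v_n, r)$ with $n$ odd. The same argument (reading off the incident edges at any internal vertex of degree $\neq 1$ and propagating along the path) shows that any subgraph with some internal degree in $\{0,2\}$ has its pair $(d(l),d(r))$ uniquely determined, and there are again $2^{n+1}-2$ such internal-degree tuples. The only change is the all-internal-degree-$1$ case: taking every second edge now yields the two pairs $(d(l),d(r)) = (1,0)$ and $(0,1)$ rather than $(0,0)$ and $(1,1)$. This asymmetry is the combinatorial source of the bipartite/non-bipartite dichotomy.

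Next I would redo Step~4 of Proposition \ref{prop:D:bipartite:book} in this mixed setting. Fix an internal-degree configuration and let $S$ be the set of pages whose internal vertices all have degree $1$, with $p$ even and $q$ odd pages among them. Using \eqref{eq:D:l,r}, each even free page contributes $(0,0)$ or $(1,1)$ to $(d(l),d(r))$, each odd free page contributes $(1,0)$ or $(0,1)$, and the base contributes $(\delta,\delta)$. Setting $A = \delta + \sum_{\text{even free}} t_i \in \{0,\ldots,p+1\}$ and $B' = \sum_{\text{odd free}} s_i \in \{0,\ldots,q\}$, one gets $d(l) = C_l + A + B'$ and $d(r) = C_r + A + q - B'$ for fixed constants $C_l, C_r$; the map $(A,B') \mapsto (d(l),d(r))$ is injective and every value is realized, so this configuration yields exactly $(p+2)(q+1)$ degree tuples. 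Since exactly $T_S = \prod_{i\notin S} b_i$ internal configurations have free set $S$, summing gives
\[|\D(B)| = \sum_{S \subseteq \{1,\ldots,k\}} (p_S + 2)(q_S + 1)\,T_S,\]
which specializes to Proposition \ref{prop:D:bipartite:book} when $q_S \equiv 0$.

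Finally I would compare the two counts. The binomial identity $\sum_S \prod_{i\notin S} b_i = \prod_i(1+b_i) = R$ together with its weighted version $\sum_S |S|\,T_S = \sum_j R/a_j$ gives $\sum_S (|S|+2)T_S = 2R + \sum_j R/a_j = |\F(B)|$; this is exactly the algebraic content of the bipartite equality (Corollary \ref{cor:equality:books}). Subtracting term by term and using $(p+2)(q+1) - (p+q+2) = q(p+1)$ yields
\[|\D(B)| - |\F(B)| = \sum_{S} q_S(p_S+1)\,T_S \geq 0,\]
where $T_S > 0$ because every cycle has length $\geq 3$, hence $b_i > 0$. If $B$ is bipartite then every $c_i$ is even, so $q_S = 0$ for all $S$ and the difference vanishes ($\mathsf{FED}$). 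If $B$ is non-bipartite, choose an odd page $j$ and take $S = \{j\}$: the corresponding term equals $T_{\{j\}} = \prod_{i\neq j} b_i > 0$, forcing $|\D(B)| > |\F(B)|$ ($\mathsf{FLD}$). The main obstacle is the bookkeeping in the odd-page degree analysis, namely verifying that the uniqueness-of-subgraph argument survives unchanged for odd paths and that the $(A,B')$-parametrization is both injective and surjective onto the attainable $(d(l),d(r))$; once the unified formula is in place, the whole comparison collapses to the one-line identity $(p+2)(q+1)-(p+q+2)=q(p+1)$, which separates the cactus-style $\mathsf{FED}$ equality from the strict $\mathsf{FLD}$ inequality caused by any odd page.
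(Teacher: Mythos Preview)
Your proposal is correct and follows essentially the same approach as the paper: the coefficient $(p_S+2)(q_S+1)$ is exactly what the paper derives in Lemma~\ref{lem:nonbipartite:book}, and your binomial identities are the content of Lemmas~\ref{lem:F_1=D_1} and~\ref{lem:F_2=D_2}. Your presentation is more unified --- writing a single closed formula for $|\D(B)|$ upfront and then reading off both cases via the one-line identity $(p+2)(q+1)-(p+q+2)=q(p+1)$ --- whereas the paper handles the bipartite and non-bipartite cases in separate statements, but the underlying decomposition and key lemmas are the same.
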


Moreover, Hypothesis \ref{<hypo} holds for any subgraph of a generalized book graph, see Remark \ref{rem:subbook}.

\section*{Acknowledgments} The authors are grateful to Professor Geir Dahl and Professor Alexander Guterman for valuable discussions on graph theory and combinatorial matrix theory. 

\appendix
\renewcommand{\thesection}{\Alph{section}}
\renewcommand{\thesubsection}{\Alph{section}.\arabic{subsection}}
\section{Appendix}\label{appendix}

\subsection{Non-bipartite cacti}

When $G_1$ is an odd cycle, we can estimate $|\D(G_1 \cup G_2)|$ analogously to Proposition \ref{prop:articulation:2:D}.

\begin{lemma}\label{lem:articulation:odd_cycle}
    Let $C$ be an odd cycle and let graph $G_2$ satisfy 
    \[
      V(C)\cap V(G_2)=\{v\}, \qquad E(C)\cap E(G_2)=\varnothing .
  \] 
  Then
  \[
      |\D(C\cup G_2)| \; > \; |\F(C)| \, |\D(G_2)|.
  \]
\end{lemma}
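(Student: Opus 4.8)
The plan is to refine the surjective correspondence $\Phi$ of Proposition \ref{prop:articulation:2:D}, which here fails to be injective precisely because the shared vertex $v$ is not degree-determinable in the odd cycle $C$, and to control the resulting collisions. Write $C$ with vertices $v, w_1, \ldots, w_{n-1}$ (so $n$ is odd) and edges $(v,w_1),(w_1,w_2),\ldots,(w_{n-1},v)$. A degree tuple $d \in \D(C \cup G_2)$ is determined by its restriction $\alpha$ to $\{w_1,\ldots,w_{n-1}\}$, its restriction $\beta$ to $V(G_2)\setminus\{v\}$, and the value $d(v)$; since $C$ and $G_2$ are edge-disjoint and meet only at $v$, the degree at $v$ splits additively and the two sides may be chosen independently. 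Hence, writing $A$ and $B$ for the sets of realizable restrictions on the two sides and $S_C(\alpha), S_{G_2}(\beta) \subseteq \mathbb{Z}_{\geq 0}$ for the corresponding sets of achievable values of $d(v)$, I obtain
\[
  |\D(C\cup G_2)| = \sum_{\alpha \in A}\sum_{\beta \in B} \bigl| S_C(\alpha) + S_{G_2}(\beta) \bigr|,
\]
where $+$ denotes the Minkowski sumset of integer sets.

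The next step is to determine the sets $S_C(\alpha)$ using the odd-cycle structure. Cutting $C$ at $v$ turns it into the path $(l, w_1, \ldots, w_{n-1}, r)$ with an even number $n-1$ of internal vertices, under which a subgraph $H \subseteq C$ corresponds to a path subgraph with the same internal degrees and $d_H(v) = d(l)+d(r)$. Applying Lemma \ref{lem:deg:0,2}, I conclude that $S_C(\alpha)$ is a singleton whenever some internal degree in $\alpha$ lies in $\{0,2\}$, whereas for the unique all-ones restriction $\alpha_\ast$ one has $S_C(\alpha_\ast) = \{0, 2\}$. Since $\sum_{\alpha \in A} |S_C(\alpha)| = |\D(C)| = 2^n$ by Proposition \ref{prop:cycles} and only $\alpha_\ast$ contributes more than one value, this yields $|A| = 2^n - 1 = |\F(C)|$.

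Combining these computations, and using $|\D(G_2)| = \sum_{\beta \in B} |S_{G_2}(\beta)|$ together with $|\F(C)| = |A|$, I rewrite the target quantity as
\[
  |\F(C)|\,|\D(G_2)| = \sum_{\alpha \in A}\sum_{\beta \in B}\bigl|S_{G_2}(\beta)\bigr|.
\]
Since $S_C(\alpha)$ is always nonempty, each summand satisfies $|S_C(\alpha)+S_{G_2}(\beta)| \geq |S_{G_2}(\beta)|$, giving the inequality $\geq$ term by term. For strictness I take $\alpha = \alpha_\ast$: then $S_C(\alpha_\ast)+S_{G_2}(\beta) = S_{G_2}(\beta) \cup \bigl(S_{G_2}(\beta)+2\bigr)$, and because $\max S_{G_2}(\beta) + 2$ is a genuinely new element, this sumset is strictly larger than $S_{G_2}(\beta)$ for every $\beta \in B$. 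As $\alpha_\ast \in A$ and $B \neq \varnothing$ (the empty subgraph lies in $\D(G_2)$), at least one summand is strict, and the desired strict inequality $|\D(C\cup G_2)| > |\F(C)|\,|\D(G_2)|$ follows.

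The main obstacle is the bookkeeping of the first paragraph: one must verify carefully that the assignment $d \mapsto (\alpha, \beta, d(v))$ is a genuine bijection from $\D(C\cup G_2)$ onto the triples with $d(v) \in S_C(\alpha) + S_{G_2}(\beta)$, i.e. that the $C$-side and $G_2$-side contribute independently and that no additional constraint couples $\alpha$ and $\beta$. Once this parametrization and the sumset formula are in place, the odd-cycle input from Lemma \ref{lem:deg:0,2} — the appearance of the two-element set $\{0,2\}$ at $\alpha_\ast$ — is exactly what forces the strict gain, so the remainder is routine.
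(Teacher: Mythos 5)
Your proof is correct and rests on exactly the same structural facts as the paper's: cutting the odd cycle open at $v$, invoking Lemma~\ref{lem:deg:0,2} to see that $d(v)$ is determined by the internal degrees except at the all-ones profile where it can be $0$ or $2$, and noting that the number of internal profiles equals $|\F(C)| = 2^n-1$. The paper packages this as an injection of $\D'(C)\times\D(G_2)$ (with one degree tuple of $C$ deleted) into $\D(C\cup G_2)$ plus a single witness outside the image, whereas your Minkowski-sum bookkeeping is a slightly more quantitative rendering of the same idea (it shows the gap is at least the number of realizable profiles on $V(G_2)\setminus\{v\}$); both are sound.
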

\begin{proof}

    Denote $C = (v, u_1, \ldots, u_{n}, v)$, where $n$ is even, since $C$ is odd. Let $d \in \D(C)$. Assume that $d(u_i)$ is known for all $i = 1, \ldots, n$.  If $d(u_i) \in \{0, 2\}$ for some $i$, then we can uniquely determine $d(v)$.
    
    Otherwise $d(u_i) = 1$ for all $i$, and there are exactly two subgraphs with this condition: take every second edge of $C$ starting from $(v,u_1)$ or $(u_1,u_2)$. Thus $d(v) \in \{0, 2\}$, see an illustration for $n = 6$:

        \medskip
    
     \begin{tikzpicture}[scale=0.6,
        every node/.style={circle,draw,fill=white,inner sep=1.8pt},]   % adjust “scale” to enlarge/shrink

  %----------------- vertices -----------------
  \node (v1) at (1,0) {\Large $v$};
  \node (v2) at (4,1) {$u_1$};
  \node (v3) at (7,1) {$u_2$};
  \node (v4) at (10,1) {$u_3$};
  \node (v5) at (10,-1) {$u_4$};
  \node (v6) at (7,-1) {$u_5$};
  \node (v7) at (4,-1) {$u_6$};

  \node (w1) at (15,0) {\Large $v$};
  \node (w2) at (18,1) {$u_1$};
  \node (w3) at (21,1) {$u_2$};
  \node (w4) at (24,1) {$u_3$};
  \node (w5) at (24,-1) {$u_4$};
  \node (w6) at (21,-1) {$u_5$};
  \node (w7) at (18,-1) {$u_6$};

  %----------------- edges -----------------------
  \draw[blue, very thick] (v1)--(v2);
  \draw[blue, very thick]  (w2)--(w3);
  \draw[blue, very thick] (v3)--(v4);
  \draw[blue, very thick]  (w4)--(w5);
  \draw[blue, very thick] (v5)--(v6);
  \draw[blue, very thick]  (w6)--(w7);
  \draw[blue, very thick] (v7)--(v1);
  
  \draw[thick, dotted] (w1)--(w2);
  \draw[thick, dotted]  (v2)--(v3);
  \draw[thick, dotted] (w3)--(w4);
  \draw[thick, dotted]  (v4)--(v5);
  \draw[thick, dotted] (w5)--(w6);
  \draw[thick, dotted]  (v6)--(v7);
  \draw[thick, dotted] (w7)--(w1);

% \node[draw=none,fill=none] at (6, -2) {$C_1$};
% \node[draw=none,fill=none] at (20, -2) {$C_2$};
% \node[draw=none,fill=none] at (0, -2) {$ $};
\end{tikzpicture}

\noindent  Let $d'_C \in \D(C)$ be defined by $d_C'(w) = \begin{cases}
        0, \ w=v;\\
        1, \ w\neq v.
    \end{cases}$ Denote $\D'(C) = \D(C) \setminus \{d'_C\}$. Then $|\D'(C)| = 2^{n+1} - 1 = |\F(C)|$ by Proposition \ref{prop:cycles}.

    Now, consider a map $\Phi$ from Proposition \ref{prop:articulation:2:D}. Its restriction to $\D'(C)\times \D(G_2)$ is injective and therefore $|\D(C\cup G_2)| \geq |\F(C)||\D(G_2)|$. To prove that the inequality is strict, consider a subgraph $H$ of $G$ obtained by a union of the empty subgraph of $G_2$ and the subgraph of $C$ that yields $d'_C$. Then $d_H(v) = 0$, while $d_H(u_i) = 1$ for $i = 1, \ldots, n$. Then $d_H \notin \operatorname{Im}\left(\Phi_{\vert\D'(C)\times \D(G_2)}\right)$, by the construction of $\D'(C)$.
\end{proof}

\begin{corollary}\label{cor:nonbipartite:cactus}
    If $G$ is a non-bipartite cactus, then $|\F(G)| < |\D(G)|$.
\end{corollary}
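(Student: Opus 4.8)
The plan is to prove the statement by strong induction on the number of edges of $G$, mirroring the inductive scheme of Proposition \ref{prop:cactus} but peeling off blocks in a way that detects the odd cycle responsible for non-bipartiteness. Since a non-bipartite cactus contains at least one odd cycle, it has at least three edges, and the smallest instance is a single odd cycle; this serves as the base case, where Proposition \ref{prop:cycles} directly gives $|\F(G)| = 2^n - 1 < 2^n = |\D(G)|$.

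For the inductive step, suppose $G$ is a non-bipartite cactus that is not a single cycle. Then its block--cut tree has at least two leaves, so $G$ admits a \emph{leaf block} $B$, i.e.\ a block attached to the rest of the graph at exactly one cut vertex $v$. Writing $G = B \cup G'$, where $G'$ is obtained from $G$ by deleting the non-cut vertices and edges of $B$, we have $V(B)\cap V(G') = \{v\}$ and $E(B)\cap E(G') = \varnothing$, and both $B$ and $G'$ are cacti with strictly fewer edges than $G$. In a cactus every block is a single edge or a cycle, so $B$ is either bipartite (a single edge or an even cycle) or an odd cycle, and I would split into these two cases; either leaf block suffices, so no strategic choice is required.

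\textbf{Case 1: $B$ is bipartite.} Then the odd cycle witnessing non-bipartiteness survives in $G'$, so $G'$ is a non-bipartite cactus, and the induction hypothesis gives $|\F(G')| < |\D(G')|$. Since $B$ is bipartite, its unique cut vertex $v$ is degree-determinable in $B$ by Lemma \ref{lem:determine:bipartite}, so Corollary \ref{cor:factorization} (via Proposition \ref{prop:articulation:2:D}) yields $|\F(G)| = |\F(B)|\,|\F(G')|$ and $|\D(G)| = |\D(B)|\,|\D(G')|$. As $B$ is a bipartite cactus, Proposition \ref{prop:cactus} gives $|\F(B)| = |\D(B)|$, and multiplying this equality by the strict inequality for $G'$ produces $|\F(G)| < |\D(G)|$. \textbf{Case 2: $B$ is an odd cycle.} Here Proposition \ref{prop:articulation:2:F} still gives $|\F(G)| = |\F(B)|\,|\F(G')|$, while Lemma \ref{lem:articulation:odd_cycle} applied with $C = B$ gives the strict bound $|\D(G)| = |\D(B\cup G')| > |\F(B)|\,|\D(G')|$. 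In either parity of $G'$ we have $|\F(G')| \le |\D(G')|$, by Proposition \ref{prop:cactus} if $G'$ is bipartite and by the induction hypothesis otherwise, so $|\F(G)| = |\F(B)|\,|\F(G')| \le |\F(B)|\,|\D(G')| < |\D(G)|$.

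These two cases exhaust all possibilities and close the induction. The part requiring the most care is not the algebra but the \emph{structural setup}: one must verify that a leaf block always detaches as a genuine single-vertex, edge-disjoint union $G = B \cup G'$ with $G'$ a smaller cactus, so that the factorization machinery applies cleanly, and that the relevant degree-determinability hypothesis is met. This last point is supplied automatically by bipartiteness in Case 1, and is absorbed into the tailored statement of Lemma \ref{lem:articulation:odd_cycle} in Case 2. Indeed, the genuinely new ingredient -- the strict inequality produced by an odd cycle at an articulation vertex -- is exactly Lemma \ref{lem:articulation:odd_cycle}, which I may assume; granted that, the corollary is a fairly direct block-peeling induction.
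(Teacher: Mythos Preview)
Your proposal is correct and follows essentially the same inductive block-peeling argument as the paper. The only organizational difference is that you treat the leaf-edge and leaf-cycle cases uniformly under the heading ``leaf block'', whereas the paper first strips off leaves via Remark~\ref{rem:factorization:edge} and then treats leaf cycles; the case split (bipartite leaf block versus odd-cycle leaf block) and the invocations of Propositions~\ref{prop:articulation:2:F}, \ref{prop:articulation:2:D}, \ref{prop:cactus} and Lemma~\ref{lem:articulation:odd_cycle} are otherwise identical.
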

\begin{proof}
    We prove this by induction on the number of edges of $G$. The smallest non-bipartite graph is the triangle. It satisfies the inequality by Proposition \ref{prop:cycles}. 
    
    Assume that the statement is true whenever $|E(G)| \leq m$. A cactus always has a leaf or a leaf cycle (a cycle that meets the rest of the graph in exactly one vertex, i.e., an articulation point). If $G$ has a leaf, then we can discard it by Remark \ref{rem:factorization:edge} and apply the induction hypothesis.
    
    Let $G_1$ be a leaf cycle, and let $G_2$ denote the rest of the graph. If $G_2$ is empty, then $G=G_1$ is an odd cycle and thus satisfies the inequality by Proposition \ref{prop:cycles}. Assume that $G_2$ is non-empty.
    
    If $G_1$ is an even cycle, then $G_2$ is non-bipartite. Then by induction hypothesis $|\F(G_2)| < |\D(G_2)|$. Since $G_1$ is bipartite, we can apply Proposition \ref{prop:articulation:2:F} to $\F(G)$, Proposition \ref{prop:articulation:2:D} to $\D(G)$ and conclude that $|\F(G)| < |\D(G)|$.

    Finally, assume that $G_1$ is an odd cycle. If $G_2$ is bipartite, then $|\F(G_2)| = |\D(G_2)|$. If $G_2$ is not bipartite, then $|\F(G_2)| < |\D(G_2)|$ by the induction hypothesis. Then Lemma \ref{lem:articulation:odd_cycle} implies that $|\D(G)| > |\F(G_1)||\D(G_2)| \geq |\F(G_1)||\F(G_2)| = |\F(G)|$, where the last equality follows from Proposition \ref{prop:articulation:2:F}.
\end{proof}

\subsection{Books and non-bipartite books}
\begin{lemma}\label{lem:F_1=D_1}
    Define functions $F_1, D_1 : \NN^k \longrightarrow \RR$ by \begin{gather*}
        F_1(a_1, \ldots, a_k) = \prod\limits_{i=1}^k a_i;\\
        D_1(a_1, \ldots, a_k) = \sum_{S \subseteq \{1, \ldots, k\}} \prod_{i \notin S} (a_i - 1).
    \end{gather*}
    Then $F_1 = D_1$.
\end{lemma}
\begin{proof}
Consider finite sets $A_i$ of size $a_i$, and let $A = A_1 \times \cdots \times A_k$ be their Cartesian product. Then $F_1$ equals the cardinality of $A$, i.e., the number of $k$-tuples $(x_1, \ldots, x_k)$ with $x_i \in A_i$.

Now in every $A_i$ fix an arbitrary element $x_i^* \in A_i$.

For each subset $S \subseteq \{1, \ldots, k\}$, define a subset $T_S \subseteq A$ consisting of all tuples $(x_1, \ldots, x_k)$ such that $x_i = x_i^*$ for all $i \in S$, and $x_i \in A_i \setminus \{x_i^*\}$ for all $i \notin S$. Then
\[
|T_S| = \prod_{i \notin S} (a_i - 1).
\]
Since the sets $T_S$ are disjoint and their union is the entire set $A$, we obtain
\[
|A| = \sum_{S \subseteq \{1, \ldots, k\}} |T_S| = \sum_{S \subseteq \{1, \ldots, k\}} \prod_{i \notin S} (a_i - 1),
\]
which is exactly the definition of $D_1(a_1, \ldots, a_k)$. Therefore, $F_1 = D_1$.
\end{proof}

\begin{lemma}\label{lem:F_2=D_2}
    Define functions $F_2, D_2 : \NN^k \longrightarrow \RR$ by \begin{gather*}
        F_2(a_1, \ldots, a_k) = \sum\limits_{j=1}^k\frac{\prod\limits_{i=1}^k a_i}{a_j};\\
        D_2(a_1, \ldots, a_k) = \sum\limits_{S \subseteq \{1, \ldots, k\}}|S| \cdot \prod\limits_{i\not\in S}(a_i - 1).
    \end{gather*}
    Then $F_2 = D_2$.
\end{lemma}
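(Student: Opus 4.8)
The plan is to reduce the desired identity $F_2 = D_2$ to the identity $F_1 = D_1$ already established in Lemma \ref{lem:F_1=D_1}, rather than running a fresh double count. The key idea is to read the coefficient $|S|$ appearing in $D_2$ as a sum of indicators, $|S| = \sum_{j=1}^k \mathbf{1}[j \in S]$, and then to interchange the order of summation so that the inner sum becomes a copy of $D_1$ in one fewer variable.

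Concretely, I would first write
\[
D_2(a_1, \ldots, a_k) = \sum_{S \subseteq \{1, \ldots, k\}} \Big(\sum_{j=1}^k \mathbf{1}[j \in S]\Big)\prod_{i \notin S}(a_i - 1) = \sum_{j=1}^k \sum_{S \ni j} \prod_{i \notin S}(a_i - 1).
\]
For a fixed index $j$, every subset $S$ containing $j$ is uniquely of the form $S = \{j\} \cup S'$ with $S' \subseteq \{1, \ldots, k\} \setminus \{j\}$, and for such $S$ one has $i \notin S$ precisely when $i \neq j$ and $i \notin S'$. Hence the inner sum is exactly $D_1$ evaluated on the $(k-1)$-tuple $(a_i)_{i \neq j}$:
\[
\sum_{S \ni j} \prod_{i \notin S}(a_i - 1) = \sum_{S' \subseteq \{1,\ldots,k\}\setminus\{j\}} \prod_{\substack{i \neq j \\ i \notin S'}}(a_i - 1) = D_1\big((a_i)_{i \neq j}\big).
\]
Next I would invoke Lemma \ref{lem:F_1=D_1} in these $k-1$ variables, which gives $D_1\big((a_i)_{i \neq j}\big) = F_1\big((a_i)_{i \neq j}\big) = \prod_{i \neq j} a_i = \frac{1}{a_j}\prod_{i=1}^k a_i$. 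Summing over $j$ then produces $\sum_{j=1}^k \frac{\prod_{i=1}^k a_i}{a_j} = F_2(a_1, \ldots, a_k)$, which finishes the argument.

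I do not expect a genuine obstacle here: the only point that requires care is the re-indexing of the subset sum after pulling out the index $j$, namely verifying that $\{S : j \in S\}$ is in bijection with the subsets of $\{1, \ldots, k\}\setminus\{j\}$ and that the complementation $i \notin S$ transfers correctly to the reduced variable set. As an alternative that stays within the combinatorial framework of Lemma \ref{lem:F_1=D_1}, one may fix elements $x_i^* \in A_i$ as before and observe that both $F_2$ and $D_2$ compute the single quantity $\sum_{x \in A} \big|\{\, i : x_i = x_i^* \,\}\big|$ — the expression $F_2$ by counting, for each distinguished coordinate $j$, the tuples with $x_j = x_j^*$, and the expression $D_2$ because every tuple lying in the block $T_S$ has exactly $|S|$ starred coordinates. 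Either route works, but the reduction above is shorter and reuses the preceding lemma essentially verbatim.
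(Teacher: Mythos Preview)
Your proof is correct and follows essentially the same approach as the paper: both reduce to Lemma~\ref{lem:F_1=D_1} applied in $k-1$ variables, with the paper running the chain $F_2 \to F_1 \to D_1 \to D_2$ while you run it in reverse by expanding $|S|=\sum_j \mathbf{1}[j\in S]$ first. The re-indexing step you flag is exactly the grouping the paper performs when it observes that each term $\prod_{i\notin S}(a_i-1)$ appears once for every $j\in S$.
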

\begin{proof}
For each index $j\in\{1,\dots,k\}$ write
\[
F_1\bigl(a_1,\dots,a_{j-1},a_{j+1},\dots,a_k\bigr)
  =\prod_{\substack{i=1\\ i\neq j}}^{k} a_i
  =\frac{\prod_{i=1}^{k} a_i}{a_j}.
\]
Hence
\[
F_2(a_1,\dots,a_k)=\sum_{j=1}^{k}
      F_1\bigl(a_1,\dots,a_{j-1},a_{j+1},\dots,a_k\bigr).
\]

By Lemma \ref{lem:F_1=D_1}, each summand equals
\[
D_1\bigl(a_1,\dots,a_{j-1},a_{j+1},\dots,a_k\bigr)
      =\sum_{T\subseteq\{1,\dots,k\}\setminus\{j\}}
        \prod_{i\notin T\cup\{j\}}(a_i-1).
\]
Summing these identities over $j$ gives
\[
F_2(a_1,\dots,a_k)=\sum_{j=1}^{k}\;
   \sum_{T\subseteq\{1,\dots,k\}\setminus\{j\}}
       \prod_{i\notin T\cup\{j\}}(a_i-1).
\]

Now fix an arbitrary subset \(S\subseteq\{1,\dots,k\}\).
Every term \(\prod_{i\notin S}(a_i-1)\) appears exactly $|S|$ times in the double sum above --- once for each index \(j\in S\).
Grouping equal terms therefore yields
\[
F_2(a_1,\dots,a_k)=\sum_{S\subseteq\{1,\dots,k\}}
   |S|\cdot \prod_{i\notin S}(a_i-1)
   =D_2(a_1,\dots,a_k).
\]
Thus \(F_2=D_2\).
\end{proof}

\begin{corollary}\label{cor:equality:books}
    For a bipartite generalized book graph formed by $k$ cycles of lengths $c_1, \ldots, c_k$, Expressions \eqref{eq:F:book} and \eqref{eq:D:bipartite:book} are equal.
\end{corollary}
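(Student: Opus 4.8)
The plan is to reduce both sides to the functions $F_1,F_2,D_1,D_2$ isolated in Lemmas \ref{lem:F_1=D_1} and \ref{lem:F_2=D_2}, and then invoke the equalities $F_1=D_1$ and $F_2=D_2$ proved there. The governing substitution is $a_i := 2^{c_i-1}-1$ for each $i\in\{1,\dots,k\}$, so that $a_i-1 = 2^{c_i-1}-2$, which is exactly the factor appearing in $T_S$.

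First I would rewrite the subforest count \eqref{eq:F:book}. Under the substitution, $R=\prod_{i=1}^k a_i = F_1(a_1,\dots,a_k)$, and each summand satisfies $R/(2^{c_j-1}-1)=R/a_j=\prod_{i\neq j}a_i$, so $\sum_{j=1}^k R/(2^{c_j-1}-1)=F_2(a_1,\dots,a_k)$. Hence \eqref{eq:F:book} equals $2F_1(a_1,\dots,a_k)+F_2(a_1,\dots,a_k)$. Next I would rewrite the degree-tuple count \eqref{eq:D:bipartite:book}. Since $T_S=\prod_{i\notin S}(a_i-1)$, splitting the coefficient $|S|+2$ yields
\[
\sum_{S\subseteq\{1,\dots,k\}}(|S|+2)T_S
   =\sum_{S}|S|\prod_{i\notin S}(a_i-1)
    +2\sum_{S}\prod_{i\notin S}(a_i-1)
   =D_2(a_1,\dots,a_k)+2D_1(a_1,\dots,a_k),
\]
where both sums range over $S\subseteq\{1,\dots,k\}$ and the two pieces match the definitions of $D_2$ and $D_1$ verbatim.

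Finally, Lemmas \ref{lem:F_1=D_1} and \ref{lem:F_2=D_2} give $F_1=D_1$ and $F_2=D_2$ identically on $\NN^k$, whence $2F_1+F_2=2D_1+D_2$; this is precisely the asserted equality of \eqref{eq:F:book} and \eqref{eq:D:bipartite:book}. The argument is essentially mechanical, and there is no genuine obstacle remaining once the two auxiliary lemmas are in hand. The only point demanding care — and the reason those lemmas were separated out — is verifying that the index ranges and the shift $a_i\mapsto a_i-1$ hidden inside $T_S$ align exactly with the combinatorial definitions of $D_1$ and $D_2$, so that every term is accounted for with the correct multiplicity and none is double-counted.
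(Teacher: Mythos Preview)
Your proof is correct and follows exactly the approach of the paper: substitute $a_i = 2^{c_i-1}-1$ and invoke Lemmas~\ref{lem:F_1=D_1} and~\ref{lem:F_2=D_2}. The paper's own proof is a single line to this effect; your version simply spells out the decomposition $2F_1+F_2 = 2D_1+D_2$ explicitly.
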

\begin{proof}
    Take $a_i = 2^{c_i - 1} - 1$ in Lemmas \ref{lem:F_1=D_1} and \ref{lem:F_2=D_2}.
\end{proof}

\iffalse
    Due to the symmetry, it is sufficient to compare the coefficients of $a_1a_2 \ldots, a_{k-m}$ for $m = 0, \ldots, k$. In $F_1$ $a_1a_2 \ldots, a_{k-m}$ appears $m$ times with sign $(-1)^{m - 1}$. It follows that $coeff_{a_1\ldots a_{m-k}}(F_2) = -m(-1)^{m}$. Now, similarly to Lemma \ref{lem:F_1=D_1}, \begin{align*}
            coeff_{a_1\ldots a_{m-k}}(D_2) &= \sum\limits_{q=0}^{m}q(-2)^{m-q}\binom{m}{q} = m \sum\limits_{q=0}^{m}(-2)^{(m-1) - (q-1)}\binom{m-1}{q-1}\\
            &= m(1-2)^{m - 1}= -m(-1)^m = coeff_{a_1\ldots a_{m-k}}(F_2).
        \end{align*}
\fi

\begin{lemma}\label{lem:nonbipartite:book}
    If $B$ is a non-bipartite generalized book graph, then $|\F(B)| < |\D(B)|$.
\end{lemma}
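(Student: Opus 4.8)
The plan is to first obtain a closed formula for $|\D(B)|$ valid for an arbitrary (possibly non-bipartite) generalized book, mirroring the derivation of \eqref{eq:D:bipartite:book}, and then to compare it termwise with the expression for $|\F(B)|$. The first step is an odd-page analogue of Lemma \ref{lem:deg:0,2}. For a page $P_i$ coming from an odd cycle, the base-deleted path $(l, v_1, \ldots, v_{c_i-2}, r)$ has an odd number of internal vertices. Writing $x_j \in \{0,1\}$ for the indicator of the $j$-th edge, the requirement $d(v_i)=1$ at every internal vertex forces $x_j = 1 - x_{j-1}$, so the indicators alternate; since the number of edges is even, this yields $(d_i(l), d_i(r)) \in \{(0,1),(1,0)\}$, in contrast to the symmetric pair $\{(0,0),(1,1)\}$ of the even case. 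As in Lemma \ref{lem:deg:0,2}, if some internal degree lies in $\{0,2\}$ the whole subgraph of $P_i$ — and in particular $d_i(l), d_i(r)$ — is determined; hence each page still has exactly one all-ones internal profile and $2^{c_i-1}-2$ non-all-ones profiles.

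Next I would repeat Steps 1--4 of Proposition \ref{prop:D:bipartite:book}, tracking parity. Since the internal vertices of distinct pages are disjoint, a subgraph's degree tuple is determined by its per-page internal profiles together with $(d(l),d(r))$, so $|\D(B)| = \sum_{\text{profiles}} \#\{\text{achievable }(d(l),d(r))\}$. Fix the set $S$ of pages with all-ones internal profile and split $S = S_e \sqcup S_o$ into its even and odd pages. Putting $p = \delta + \sum_{i\in S_e} d_i(l)$ (the ``symmetric'' part, ranging over $0, \ldots, |S_e|+1$) and $q = \sum_{i\in S_o} d_i(l)$ (ranging over $0, \ldots, |S_o|$), one gets $d(l) = A_l + p + q$ and $d(r) = A_r + p - q$, where $A_l, A_r$ are fixed by the non-all-ones pages. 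The map $(p,q) \mapsto (p+q, p-q)$ is injective, so the number of achievable $(d(l),d(r))$ for this $S$ equals $(|S_e|+2)(|S_o|+1)$. Since there are $T_S = \prod_{i\notin S}(2^{c_i-1}-2)$ profiles whose all-ones set is exactly $S$, I obtain
\[
|\D(B)| = \sum_{S \subseteq \{1,\ldots,k\}} (|S_e|+2)(|S_o|+1)\, T_S,
\]
which collapses to \eqref{eq:D:bipartite:book} when every page is even, a useful consistency check.

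Finally I would compare with $|\F(B)|$. Proposition \ref{prop:F:book} holds for every book, and the purely algebraic identities of Lemmas \ref{lem:F_1=D_1} and \ref{lem:F_2=D_2} (applied with $a_i = 2^{c_i-1}-1$, exactly as in Corollary \ref{cor:equality:books}) rewrite it as $|\F(B)| = \sum_S (|S|+2)\, T_S$. Subtracting termwise and using $|S| = |S_e| + |S_o|$,
\[
|\D(B)| - |\F(B)| = \sum_{S} \big[(|S_e|+2)(|S_o|+1) - (|S_e|+|S_o|+2)\big] T_S = \sum_{S} |S_o|\,(|S_e|+1)\, T_S.
\]
Every coefficient $|S_o|(|S_e|+1)$ is nonnegative and every $T_S$ is positive (each factor $2^{c_i-1}-2 \ge 2$ since $c_i \ge 3$), so the sum is $\ge 0$. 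As $B$ is non-bipartite it has an odd page $j$; taking $S = \{j\}$ gives $|S_o| = 1$ and $T_S = \prod_{i\neq j}(2^{c_i-1}-2) > 0$, so that single term is strictly positive. Hence $|\D(B)| - |\F(B)| > 0$, i.e. $|\F(B)| < |\D(B)|$.

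The main obstacle I anticipate is the derivation of the general $|\D(B)|$ formula, and in particular the clean count $(|S_e|+2)(|S_o|+1)$ of achievable endpoint-degree pairs when both even and odd all-ones pages are present: the even pages shift $d(l)$ and $d(r)$ together while the odd pages shift them oppositely, and it is precisely the injectivity of $(p,q) \mapsto (p+q, p-q)$ that guarantees these two mechanisms never collide. Everything after that is a termwise comparison whose discrepancy $|S_o|(|S_e|+1)$ is nonnegative and pins down exactly how the odd pages break the equality $|\F| = |\D|$.
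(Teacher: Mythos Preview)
Your proposal is correct and follows essentially the same route as the paper: derive the odd-page analogue of Lemma~\ref{lem:deg:0,2}, split $S$ into its even and odd parts to obtain the coefficient $(|S_e|+2)(|S_o|+1)$, and compare termwise with the bipartite expression \eqref{eq:D:bipartite:book} (which equals $|\F(B)|$ by Corollary~\ref{cor:equality:books}). The only cosmetic difference is that you compute the discrepancy $|S_o|(|S_e|+1)$ explicitly, whereas the paper simply notes the inequality $(|S_{even}|+2)(|S_{odd}|+1)\ge |S|+2$ with strictness when $|S_{odd}|>0$; these are the same observation.
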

\begin{proof}
     Assume that $B$ is formed by $k$ cycles of lengths $c_1, \ldots, c_k$, and at least one of the cycles is odd. We show that in this case $|\D(B)|$ is strictly bigger than Expression \eqref{eq:D:bipartite:book}. This is sufficient, since the latter is equal to $|\F(B)|$ by Proposition \ref{prop:F:book} and Corollary \ref{cor:equality:books}.

    Analogously to Lemma \ref{lem:deg:0,2}, consider a path $(l, v_1, \ldots, v_n, r)$, where $n$ is odd. Let $d \in \D(P)$. The only case, when we can not determine $d(l), d(r)$ by all $d(v_i)$ is when $d(v_i) = 1$ for all $i$. But in this case, since $n$ is odd, the two possibilities for $d(l)$ and $d(r)$ are $\begin{cases} d(l) = 1\\
    d(r) = 0\end{cases}$ and $\begin{cases} d(l) = 0 \\ d(r) = 1 \end{cases}$, see the illustration below:

    \medskip

    \begin{tikzpicture}[scale=0.6,
        every node/.style={circle,draw,fill=white,inner sep=1.8pt},
        edge/.style   ={very thick}]   % adjust “scale” to enlarge/shrink

  %----------------- vertices -----------------

  \node (ll) at (0,0) {\large $l$};
  \node (l1) at (0,2) {$v_1$};
  \node (l2) at (2.5,2) {$v_2$};
  \node (l3) at (5,2) {$v_3$};
  \node (l4) at (7.5,2) {$v_4$};
  \node (l5) at (10,2) {$v_5$};
  \node (lr) at (10,0) {\large $r$};
  
  \node (rl) at (14,0) {\large $l$};
  \node (r1) at (14,2) {$v_1$};
  \node (r2) at (16.5,2) {$v_2$};
  \node (r3) at (19,2) {$v_3$};
  \node (r4) at (21.5,2) {$v_4$};
  \node (r5) at (24,2) {$v_5$};
  \node (rr) at (24,0) {\large $r$};

  %----------------- edges -----------------------
  \draw[blue, very thick] (ll)--(l1);
  \draw[blue, very thick] (l2)--(l3);
  \draw[blue, very thick] (l4)--(l5);
  
  \draw[blue, very thick] (r1)--(r2);
  \draw[blue, very thick] (r3)--(r4);
  \draw[blue, very thick] (r5)--(rr);

  \draw[dotted, thick] (rl)--(r1);
  \draw[dotted, thick] (r2)--(r3);
  \draw[dotted, thick] (r4)--(r5);
  
  \draw[dotted, thick] (l1)--(l2);
  \draw[dotted, thick] (l3)--(l4);
  \draw[dotted, thick] (l5)--(lr);

\end{tikzpicture}
    
    Thus the only part of the proof of Proposition \ref{prop:D:bipartite:book}, meaningfully affected by the presence of odd cycles is Step 4, where we compute the coefficients before $\sum\limits_{S \subseteq \{1, \ldots, k\}}  T_S$. For a bipartite graph these coefficients are always $(|S|+2)$. Let us derive the coefficients in the non-bipartite case.

    Denote $S_{odd} \coloneqq \{i \in S : \ c_i \text{ is odd}\}$ and $S_{even} \coloneqq \{i \in S : \ c_i \text{ is even}\}$. We can further split Expression \eqref{eq:D:l,r} by even and odd cycles. 
    \begin{equation*}
        \begin{cases}
        d(l) = \delta + \sum\limits_{i \in S_{even}} d_i(l) + \sum\limits_{h \in S_{odd}} d_h(l)+ \sum\limits_{j \notin S} d_j(l);\\
        d(r) = \delta + \sum\limits_{i \in S_{even}} d_i(r) + \sum\limits_{h \in S_{odd}} d_h(r)+ \sum\limits_{j \notin S} d_j(r).
        \end{cases}
    \end{equation*}
    Recall that $\sum\limits_{j \notin S} d_j(l)$ and $\sum\limits_{j \notin S} d_j(r)$ are determined, $\delta \in \{0, 1\}$, $d_i(l) = d_i(r) \in \{0, 1\}$ for $i \in S_{even}$, and $\begin{cases} d_h(l) = 1\\
    d_h(r) = 0\end{cases}$ or $\begin{cases} d_h(l) = 0 \\ d_h(r) = 1 \end{cases}$ for $h \in S_{odd}$.
    
    Denote $x\coloneqq \delta + \sum\limits_{i \in S_{even}} d_i(l) = \delta + \sum\limits_{i \in S_{even}} d_i(r)$ and $y \coloneqq \sum\limits_{h \in S_{odd}} d_h(l)$. Then $\sum\limits_{h \in S_{odd}} d_h(r) = |S_{odd}| - y$. Thus \[\begin{cases}
        d(l) = x+y + \sum\limits_{j \notin S} d_j(l);\\
        d(r) = x-y+|S_{odd}| + \sum\limits_{j \notin S} d_j(r),
        \end{cases} \text{ where $0 \leq x \leq |S_{even}| + 1$ and $0 \leq y \leq |S_{odd}|$.}\]
    It is easy to see that different pairs $x, y$ yield distinct pairs $d(l), d(r)$. Therefore, the required coefficient is $(|S_{even}| + 2)(|S_{odd}| + 1) \geq |S_{odd}| + |S_{even}| + 2 = |S|+2$. Moreover, the inequality is strict, whenever $|S_{odd}| > 0$.Finally, $|\D(B)|$ is strictly bigger than Expression \eqref{eq:D:bipartite:book}, which equals $|\F(B)|$.
\end{proof}

The following example illustrates Step 4 of the proof.
\begin{example}
Consider the case $|S_{odd}|=|S_{even}| = 1$. Then there are $(1+2)(1+1)=6$ options for the degrees of $l$ and $r$. On the picture below there are $6$ two-page books with all internal vertices of degree $1$. The degrees of $l, r$ can be $(0, 1), (1, 0), (1, 2), (2, 1), (2, 3), (3, 2)$.

\bigskip
\begin{tikzpicture}[scale=0.51,
    thick,
    blue,
    dotted/.style={dash pattern=on 1pt off 1pt},
    declare function={R=2;}
  ]
  % \TriHex{shift}{E0}{E1}{E2}{E3}{E4}{E5}{mid}
  % Uses E0 & E2 for the slanted sides of the top triangle;
  % E3–E5 for the bottom three edges; 'mid' for the middle line.
  \newcommand*\TriHex[7]{%
    \begin{scope}[xshift=#1cm]
      \path (0:R) coordinate (Rgt)
            (180:R) coordinate (Lft)
            (90:R) coordinate (Apex);
      % top triangle
      \draw[#2] (Rgt) -- (Apex);   % right slanted
      \draw[#3] (Lft) -- (Apex);   % left slanted
      % bottom half
      \draw[#4] (180:R) -- (240:R);
      \draw[#5] (240:R) -- (300:R);
      \draw[#6] (300:R) -- (360:R);
      % middle line
      \draw[#7] (0:R) -- (180:R);
    \end{scope}
  }

  % six figures in a row
  \TriHex{ 0}{solid}{dotted}{dotted}{solid }{dotted}{dotted } % 1
  \TriHex{ 5}{dotted}{solid}{dotted}{solid }{dotted}{dotted } % 2
  \TriHex{10}{solid}{dotted}{dotted}{solid }{dotted}{solid } % 3
  \TriHex{15}{dotted}{solid}{dotted}{solid }{dotted}{solid } % 4
  \TriHex{20}{solid}{dotted}{solid}{dotted }{solid}{solid } % 5 (repeat 1)
  \TriHex{25}{dotted}{solid}{solid}{dotted }{solid}{solid } % 6 (repeat 2)
\end{tikzpicture}
\end{example}

\begin{remark}\label{rem:subbook}
    Hypothesis \ref{<hypo} holds for any subgraph of a generalized book graph. If a page in a subgraph is missing some ``internal'' edges, then we can discard the page using Remark \ref{rem:factorization:edge}. Thus the only subgraph we need to investigate is a book without the base. The proofs in this case are very similar to the generalized book case. Let $G$ be subgraph obtained by deleting the base from a book $B$ formed by $k$ cycles of lengths $c_1, \ldots, c_k$. 
    
    Denote $R \coloneqq \prod\limits_{i=1}^k (2^{c_i - 1} - 1)$. Then, by the proof to Proposition \ref{prop:F:book} we obtain $|\F(G)| = R + \sum\limits_{j=1}^{k}\frac{R}{2^{c_j - 1} - 1}.$ Now similarly to Proposition \ref{prop:D:bipartite:book} and Lemma \ref{lem:nonbipartite:book} we compute $|\D(G)|$. The only difference in Lemma \ref{lem:nonbipartite:book} is that due to the absence of the base, we obtain the coefficient $(|S_{even}| + 1)(|S_{odd}| + 1) \geq |S|+1$, where the inequality is strict, whenever $|S_{even}|, |S_{odd}| \neq 0$. Note that now (because there is no base) $G$ is bipartite if and only if $c_1, \ldots, c_k$ have the same parity. Therefore, we obtain that if $G$ is bipartite, then $|\D(B)| = \sum\limits_{S \subseteq \{1, \ldots, k\}} (|S| + 1) T_S$. The latter equals $|\F(G)|$ by Lemmas \ref{lem:F_1=D_1}, \ref{lem:F_2=D_2}. If $G$ is non-bipartite, then similarly to Lemma \ref{lem:nonbipartite:book}, we obtain $|\F(G)| < |\D(G)|$.

\end{remark}

\bibliographystyle{plain}
\bibliography{mybibfile}
	
\end{document}